 \renewcommand{\a}{\alpha}
\renewcommand{\b}{\beta}
\newcommand{\g}{\gamma}
\newcommand{\G}{\Gamma}
\renewcommand{\l}{\lambda}
\renewcommand{\L}{\Lambda}
\renewcommand{\(}{\left\(}
\renewcommand{\)}{\right\)}
\renewcommand{\[}{\left\[}
\renewcommand{\]}{\right\]}
\let\dotlessi=\i
\numberwithin{equation}{section}
 \theoremstyle{plain}
\newtheorem{theorem}{Theorem}[section]
\newtheorem{lemma}[theorem]{Lemma}
\newtheorem{remark}[]{Remark}
\newtheorem{corollary}[theorem]{Corollary}
\def\proof{\@ifnextchar[{\@oproof}{\@nproof}}
\def\@oproof[#1][#2]{\trivlist\item[\hskip\labelsep\textit{#2 Proof of\
#1.}~]\ignorespaces}
\def\@nproof{\trivlist\item[\hskip\labelsep\textit{Proof.}~]\ignorespaces}
\begin{document}
\title[A Ramanujan-type formula for $\zeta^{2}(2m+1)$ and its generalizations]{A Ramanujan-type formula for $\zeta^{2}(2m+1)$ and its generalizations} 

\author{Atul Dixit and Rajat Gupta}\thanks{2010 \textit{Mathematics Subject Classification.} Primary 11M06; Secondary 11J81.\\
\textit{Keywords and phrases.} Odd zeta values, modified Bessel function, Dedekind eta function, Ramanujan's formula, irrationality}
\address{Discipline of Mathematics, Indian Institute of Technology Gandhinagar, Palaj, Gandhinagar 382355, Gujarat, India} 
\email{adixit@iitgn.ac.in, rajat\_gupta@iitgn.ac.in}
\begin{abstract}
A Ramanujan-type formula involving the squares of odd zeta values is obtained. The crucial part in obtaining such a result is to conceive the correct analogue of the Eisenstein series involved in Ramanujan's formula for $\zeta(2m+1)$. The formula for $\zeta^{2}(2m+1)$ is then generalized in two different directions, one, by considering the generalized divisor function $\sigma_z(n)$, and the other, by studying a more general analogue of the aforementioned Eisenstein series, consisting of one more parameter $N$. A number of important special cases are derived from the first generalization. For example, we obtain a series representation for $\zeta(1+\omega)\zeta(-1-\omega)$, where $\omega$ is a non-trivial zero of $\zeta(z)$. We also evaluate a series involving the modified Bessel function of the second kind in the form of a rational linear combination of $\zeta(4k-1)$ and $\zeta(4k+1)$ for $k\in\mathbb{N}$. 
\end{abstract}
\maketitle

\section{Introduction}\label{intro}
The Riemann zeta function $\zeta(s)$ is one of the most important special functions of Mathematics. While the critical strip $0<$ Re$(s)<1$ is undoubtedly the most important region in the complex plane on account of the unsolved problem regarding the location of non-trivial zeros of $\zeta(s)$, namely, the Riemann Hypothesis, the right-half plane Re$(s)>1$ also has its own share of interesting unsolved problems to contribute to. For example, while it is known that all even zeta values $\zeta(2m)$, $m\in\mathbb{N}$, are transcendental, thanks to Euler's formula 
\begin{equation}\label{zetaevenint}
\zeta(2 m ) = (-1)^{m +1} \frac{(2\pi)^{2 m}B_{2 m }}{2 (2 m)!}
\end{equation}
and the facts that $\pi$ is transcendental and the Bernoulli numbers $B_m$ are rational, the arithmetic nature of the corresponding odd zeta values $\zeta(2m+1)$ is far from being known. So far the only explicit result in this direction is that of Ap\'{e}ry \cite{apery1}, \cite{apery2} which says $\zeta(3)$ is irrational. 

Though Rivoal \cite{rivoal}, and Ball and Rivoal \cite{ballrivoal} have shown that there are infinitely many odd zeta values that are irrational, one is unable to explicitly say which out of these (except $\zeta(3)$) is irrational. For any pair of positive integers $a$ and $b$, Haynes and Zudilin \cite[Theorem 1]{hayneszudilin} have shown that either there are infinitely many $m\in\mathbb{N}$ for which $\zeta(am+b)$ is irrational, or the sequence $\{q_m\}_{m=1}^{\infty}$ of common denominators of the rational elements of the set $\{\zeta(a+b), \zeta(2a+b), \cdots, \zeta(am+b)\}$ grows super-exponentially, i.e., $q_m^{1/m}\to\infty$ as $m\to\infty$. A beautiful result of Zudilin \cite{zudilin} states that at least one of the numbers $\zeta(5), \zeta(7), \zeta(9)$ and $\zeta(11)$ is irrational. A very recent result due to Rivoal and Zudilin \cite{rivoalzudilin} states that at least two of the numbers $\zeta(5), \zeta(7), \cdots, \zeta(69)$ are irrational.

One of the most important formulas for odd zeta values is that of Ramanujan \cite[p.~173, Ch. 14, Entry 21(i)]{ramnote}, namely, for $\a, \b>0$ with $\a\b=\pi^2$ and $m\in\mathbb{Z}, m\neq 0$,
\begin{align}\label{zetaodd}
\a^{-m}\left\{\frac{1}{2}\zeta(2m+1)+\sum_{n=1}^{\infty}\frac{n^{-2m-1}}{e^{2\a n}-1}\right\}&=(-\b)^{-m}\left\{\frac{1}{2}\zeta(2m+1)+\sum_{n=1}^{\infty}\frac{n^{-2m-1}}{e^{2\b n}-1}\right\}\nonumber\\
&\quad-2^{2m}\sum_{j=0}^{m+1}\frac{(-1)^jB_{2j}B_{2m+2-2j}}{(2j)!(2m+2-2j)!}\a^{m+1-j}\b^j.
\end{align}
This formula has number of applications. For example, as shown in \cite{gmr}, it encodes fundamental transformation properties of Eisenstein series on the full modular group and their Eichler integrals. See \cite{berndtstraubzeta} for more details. Identity \eqref{zetaodd} and its special cases, namely, for $\a,\b>0$ with $\a\b=\pi^2$, 
\begin{align}
\a^{m}\sum_{n=1}^{\infty}\frac{n^{2m-1}}{e^{2\a n}-1}-(-\b)^m\sum_{n=1}^{\infty}\frac{n^{2m-1}}{e^{2\b n}-1}&=\left(\a^m-(-\b)^m\right)\frac{B_{2m}}{4m}\hspace{5mm}(m>1),\label{mg1}\\
\a\sum_{n=1}^{\infty}\frac{n}{e^{2n\a}-1}+\b\sum_{n=1}^{\infty}\frac{n}{e^{2n\b}-1}&=\frac{\a+\b}{24}-\frac{1}{4},\label{m-1}\\
\sum_{n=1}^{\infty}\frac{1}{n(e^{2n\a}-1)}-\sum_{n=1}^{\infty}\frac{1}{n(e^{2n\b}-1)}&=\frac{\b-\a}{12}+\frac{1}{4}\log\left(\frac{\a}{\b}\right)\label{m0},
\end{align}
are known to have applications in theoretical computer science \cite{kirprod} in the analysis of special data structures and algorithms. 

The first published proof of \eqref{zetaodd} is due to Malurkar \cite{malurkar} although he was not aware that this formula can be found in Ramanujan's Notebooks. Grosswald too rediscovered this formula and studied it more generally in \cite{gross1}, \cite{gross2}. Berndt \cite[Theorem 2.2]{berndtrocky} derived a general formula from which both Euler's formula \eqref{zetaevenint} and Ramanujan's formula \eqref{zetaodd} follow as special cases, thus showing that Euler's and Ramanujan's formulas are natural companions of each other. For an up-to-date history and developments related to Ramanujan's formulas \eqref{zetaodd}-\eqref{m0}, we refer the reader to \cite{berndtstraubzeta}. Very recently O'Sullivan \cite[Theorem 1.3]{osullivan} has found non-holomorphic analogues of the formulas of Ramanujan, Grosswald and Berndt containing Eichler integrals of holomorphic Eisenstein series. 

One more special case of Ramanujan's formula, other than \eqref{mg1}-\eqref{m0}, can be obtained by letting $\a=\b=\pi$ and $m$ to be odd, thereby resulting in a formula of Lerch \cite{lerch}, namely,
\begin{align}\label{lerchh}
\zeta(2m+1)+2\sum_{n=1}^{\infty}\frac{1}{n^{2m+1}(e^{2\pi n}-1)}=\pi^{2m+1}2^{2m}\sum_{j=0}^{m+1}\frac{(-1)^{j+1}B_{2j}B_{2m+2-2j}}{(2j)!(2m+2-2j)!}.
\end{align}
This implies \cite{gmr} that when $m$ is odd, at least one of $\zeta(2m+1)$ and $\sum_{n=1}^{\infty}\frac{1}{n^{2m+1}(e^{2\pi n}-1)}$ is transcendental.

Suppose we now ask ourselves if formulas analogous to \eqref{zetaevenint} and \eqref{zetaodd} could be obtained for squares of the zeta values? One reason why one may want to look at this is to obtain more information on the arithmetic nature of $\zeta(2m+1)$; indeed, for a certain $k\in\mathbb{N}$, $\zeta(2k+1)$ would be irrational if $\zeta^{2}(2k+1)$ turns out to be so. 

It is easy to see that the formula for $\zeta^{2}(2m)$ is trivially obtained by squaring both sides of \eqref{zetaevenint}. However, if one squares both sides of \eqref{zetaodd}, the resulting formula for $\zeta^{2}(2m+1)$ is very cumbersome with no hopes of further simplification or of any use. 
It is important to mention here a famous quote of G.~H.~Hardy \cite[p.~85]{apology}: `\emph{Beauty is the first test: there is no permanent place in the world for ugly mathematics}'.

The question that remains then is, does there exist a formula for $\zeta^{2}(2m+1)$ which almost matches \eqref{zetaodd} in terms of elegance? Not only do we affirmatively answer this question in this paper, but we also generalize our result in two different directions. In order to derive such a result, however, it is important to understand the work of Koshliakov in \cite{koshlondon}, and also in \cite{koshliakov2}, which builds the foundation of our work.

Note that Ramanujan's formula for $\zeta(2m+1)$ involves the Lambert series
\begin{equation*}
\sum_{n=1}^{\infty}\frac{a(n)e^{-2\pi nx}}{1-e^{-2\pi nx}}=\sum_{n=1}^{\infty}\frac{a(n)}{e^{2\pi nx}-1}
\end{equation*}
with $a(n)=n^{-2m-1}$. For $m<0$, this Lambert series is essentially the Eisenstein series of weight $-2m$ (except for the constant term), whereas for $m\geq 0$, it can be regarded as a ``negative weight Eisenstein series''. In his first Notebook, Ramanujan has few other results involving the negative weight Eisenstein series. The reader is referred to an interesting article of Duke \cite{duke} in this regard.

The function $1/(e^{2\pi x}-1)$ has simple poles at $x=0, \pm in, n\in\mathbb{N}$, and hence the partial fraction decomposition
\begin{equation}\label{pf}
\frac{1}{e^{2\pi x}-1}=-\frac{1}{2}+\frac{1}{2\pi x}+\frac{x}{\pi}\sum_{n=1}^{\infty}\frac{1}{x^2+n^2},
\end{equation}
so that the pole at $x=\pm in$ has residue $\frac{1}{2\pi}$.

Koshliakov \cite{koshlondon} studied a function which has simple pole at $x=\pm in$ at each $n\in\mathbb{N}$, analogous to $1/(e^{2\pi x}-1)$, but with residue $\frac{1}{2\pi}d(n)$, where $d(n)$ denotes the number of divisors of $n$. This function is given by
\begin{equation}\label{minusspl}
\Omega(x):=2\sum_{j=1}^{\infty}d(j)\left(K_{0}\left(4\pi\epsilon\sqrt{jx}\right)+K_{0}\left(4\pi\overline{\epsilon}\sqrt{jx}\right)\right).
\end{equation}
Here $K_{z}(x):=\frac{\pi}{2}\frac{\left(I_{-z}(x)-I_{z}(x)\right)}{\sin z\pi}$ is the modified Bessel function of the second kind of order $z$ \cite[p.~78]{watson-1944a} with $I_{z}(x)$ being that of the first kind \cite[p.~77]{watson-1944a}. Also, here, and throughout the paper, $\epsilon = \exp(\frac{i\pi}{4})$ so that $\overline{\epsilon}= \exp(-\frac{i\pi}{4})$. It satisfies a relation \cite[Equation 5]{koshlyakov-1929a}, \cite[Equation 7]{koshlondon} analogous to \eqref{pf}, namely,
\begin{equation*}
\Omega(x) = - \gamma - \frac{1}{2} \log x - \frac{1}{4 \pi x} +
\frac{x}{\pi} \sum_{j=1}^{\infty} \frac{d(j)}{x^{2} + j^{2}},
\end{equation*}
where $\gamma$ denotes Euler's constant. The above formula implies, in particular, that $\Omega(x)$ is real for $x>0$. The function $\Omega(x)$ plays an instrumental role in Koshliakov's extremely clever proof in \cite{koshlyakov-1929a} of the Vorono\"{\dotlessi} summation formula for $d(n)$ and satisfies many beautiful properties, for example, for $c=$Re$(s)>1$ \cite[Equation (11)]{koshlondon},
\begin{equation*}
\Omega(x)=\frac{1}{2\pi i}\int_{(c)}\frac{\zeta^{2}(1-s)x^{-s}}{2\cos\left(\frac{1}{2}\pi s\right)}\, ds,
\end{equation*}
where here, and throughout the paper, $\int_{(c)}$ denotes the line integral $\int_{c-i\infty}^{c+i\infty}$.

In the same paper \cite[Equations (27), (29)]{koshlondon}, Koshliakov gave two beautiful closed-form evaluations of infinite series involving the function $\Omega(x)$, namely, for $m>0$,
\begin{equation}\label{hsok1}
\sum_{n=1}^{\infty}n^{4m+1}d(n)\Omega(n)=\frac{B_{4m+2}^{2}}{(4m+2)^2}\left\{\log(2\pi)-\sum_{k=1}^{4m+1}\frac{1}{k}-\frac{\zeta'(4m+2)}{\zeta(4m+2)}\right\},
\end{equation}
and\footnote{In \cite[Equation (48)]{koshliakov2}, Koshliakov incorrectly evaluated this series as
\begin{equation*}
\sum_{n=1}^{\infty}nd(n)\Omega(n)=\frac{1}{4}\left\{\log(2\pi)-\frac{9}{8}-\frac{6}{\pi^2}\zeta'(2)\right\}.
\end{equation*}}
\begin{equation}\label{hsok2}
\sum_{n=1}^{\infty}nd(n)\Omega(n)=\frac{1}{144}\left\{\log(2\pi)-1-\frac{6}{\pi^2}\zeta'(2)\right\}-\frac{1}{32\pi}.
\end{equation}
Observing the analogy between $1/(e^{2\pi x}-1)$ and $\Omega(x)$, one can deduce that the first of these results is analogous to the one by Glaisher \cite{glaisher1889}, namely,
\begin{equation}\label{glashier}
\sum_{n=1}^{\infty}\frac{n^{4m+1}}{e^{2\pi n}-1}=\frac{B_{4m+2}}{2(4m+2)}\hspace{5mm}(m>0).
\end{equation}
The latter can actually be obtained by replacing $m$ by $-2m-1$, and letting $\a=\b=\pi$ in Ramanujan's formula \eqref{zetaodd}. Also, \eqref{hsok2} is analogous to Schl\"{o}milch's formula \cite{schlomilch}
\begin{equation*}
\sum_{n=1}^{\infty}\frac{n}{e^{2\pi n}-1}=\frac{1}{24}-\frac{1}{8\pi},
\end{equation*}
which follows from \eqref{m-1} by letting $\a=\b=\pi$. In \cite{koshliakov2}, Koshliakov studied a more general series than $\Omega(x)$ and analogous results.
\section{New results}\label{mresults}
Note that the series in \eqref{hsok1} can be constructed from \eqref{glashier} by replacing $1$ and $\frac{1}{e^{2\pi n}-1}$ in the latter by $d(n)$ and $\Omega(n)$ respectively. This analogy and the aforementioned discussion suggests that a full-fledged Ramanujan-type formula for $\zeta^{2}(2m+1)$ is not inconceivable. Indeed, we derive this formula in the theorem below.
\begin{theorem}\label{zetasquared}
For $\rho>0$, define $\Omega_{\rho}(n)$ be defined by
\begin{equation*}
\Omega_{\rho}(x):=2\sum_{j=1}^{\infty} d(j)\left(K_{0}(4\rho\epsilon\sqrt{jx})+K_{0}(4\rho\overline{\epsilon}\sqrt{jx})\right).
\end{equation*}
Let $m$ be a non-zero integer. For any $\alpha, \beta >0$ satisfying $\alpha \beta = \pi^2$,
\begin{align}\label{zetasquaredeqn}
&(\alpha^2)^{-m} \Bigg \{ \zeta^{2}(2m+1)\left(\gamma + \log\left(\frac{\alpha}{\pi}\right)- \frac{\zeta'(2m+1)}{\zeta(2m+1)}\right)+\sum_{n=1}^{\infty} \frac{d(n)\Omega_{\alpha}(n)}{n^{2m+1}} \Bigg \}\nonumber\\
&=(-\beta^2)^{-m} \Bigg \{ \zeta^{2}(2m+1)\left(\gamma + \log\left(\frac{\beta}{\pi}\right) - \frac{\zeta'(2m+1)}{\zeta(2m+1)}\right)+\sum_{n=1}^{\infty} \frac{d(n)\Omega_{\beta}(n)}{n^{2m+1}} \Bigg \} \nonumber\\
& \quad - \pi 2^{4m}\sum_{j=0}^{m+1}\frac{(-1)^{j}B^2_{2j}B^2_{2m+2-2j}}{((2j)!)^2((2m+2-2j)!)^2}(\alpha^{2})^{j}
(\beta^{2})^{m+1-j}.
\end{align}
\end{theorem}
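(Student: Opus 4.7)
The plan is to express the series $S(\alpha):=\sum_{n\geq 1}d(n)\Omega_{\alpha}(n)/n^{2m+1}$ as a Mellin--Barnes contour integral, shift the contour past the relevant poles, and then recognise the leftover integral as $S(\beta)$ up to a prefactor dictated by $\alpha\beta=\pi^{2}$. From the definition of $\Omega_{\rho}$ one sees $\Omega_{\alpha}(x)=\Omega(\alpha^{2}x/\pi^{2})$, so Koshliakov's representation recalled in the excerpt gives
\[
\Omega_{\alpha}(n)=\frac{1}{2\pi i}\int_{(c)}\frac{\zeta^{2}(1-s)}{2\cos(\pi s/2)}\Bigl(\frac{\pi^{2}}{\alpha^{2}n}\Bigr)^{s}\,ds\qquad(c>1).
\]
Substituting this into $S(\alpha)$ and interchanging sum and integral --- legitimate for $m>0$ and $c>1$ because the Bessel functions $K_{0}$ force $\Omega_{\alpha}(n)$ to decay exponentially --- yields
\[
S(\alpha)=\frac{1}{2\pi i}\int_{(c)}F_{\alpha}(s)\,ds,\qquad F_{\alpha}(s):=\frac{\zeta^{2}(1-s)\,\zeta^{2}(2m+1+s)\,\pi^{2s}}{2\cos(\pi s/2)\,\alpha^{2s}},
\]
since $\sum d(n)n^{-(s+2m+1)}=\zeta^{2}(s+2m+1)$ converges absolutely on $(c)$.

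I would then shift the contour to the line $(c')=(-c-2m)$, which is symmetric about $s=-m$. Stirling's estimate for $\zeta$ in vertical strips combined with the exponential decay of $1/\cos(\pi s/2)$ away from the real axis makes the horizontal tails vanish, so only the poles in the strip $-c-2m<\Re(s)<c$ contribute: a double pole at $s=0$ from $\zeta^{2}(1-s)$, a double pole at $s=-2m$ from $\zeta^{2}(2m+1+s)$, and simple poles at $s=1-2k$ for $k=0,1,\dots,m+1$ from the zeros of $\cos(\pi s/2)$. The hypothesis $m\neq 0$ keeps the two double poles separate (they would otherwise collide at $s=0$ into an order-four pole).

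Evaluating these residues produces the blocks appearing in \eqref{zetasquaredeqn}. Using $\zeta(1-s)=-s^{-1}+\gamma+O(s)$ and Taylor-expanding the regular factor, the residue at $s=0$ comes out to $-\zeta^{2}(2m+1)(\gamma+\log(\alpha/\pi)-\zeta'(2m+1)/\zeta(2m+1))$, which is minus the $A(\alpha)$-block of the theorem. A parallel calculation at $s=-2m$ with $u=s+2m$ and $\zeta(1+u)=u^{-1}+\gamma+O(u)$, together with $\alpha\beta=\pi^{2}$, yields $(-1)^{m}(\alpha/\pi)^{4m}$ times the $A(\beta)$-block. At each simple pole $s=1-2k$ one gets $\zeta^{2}(2k)\zeta^{2}(2m+2-2k)$ up to an explicit rational factor; Euler's formula \eqref{zetaevenint} then converts these into exactly the $j=k$ summand of the Bernoulli double product, with the correct alternating sign.

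The final step identifies what is left of the integral on $(c')$. Under the substitution $s\mapsto -s-2m$ the integrand transforms as $F_{\alpha}(-s-2m)=(-1)^{m}(\alpha/\pi)^{4m}F_{\beta}(s)$, using $\cos(\pi(s+2m)/2)=(-1)^{m}\cos(\pi s/2)$ and $\alpha/\pi=\pi/\beta$; moreover the line $(c')$ maps back to $(c)$, so the shifted integral equals $(-1)^{m}(\alpha/\pi)^{4m}S(\beta)$. Multiplying the whole identity by $(\alpha^{2})^{-m}$ and simplifying with the elementary identity $(\alpha^{2})^{-m}(-1)^{m}(\alpha/\pi)^{4m}=(-\beta^{2})^{-m}$ assembles everything into the form \eqref{zetasquaredeqn}. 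The main obstacle I anticipate is the bookkeeping at the two double poles, where the Laurent expansion of $\zeta^{2}$ mixes $\gamma$ with derivatives of $\log\alpha$, $\log\zeta(2m+1+\cdot)$ and the secant factor; a sign or missed derivative there would destroy the required $\alpha\leftrightarrow\beta$ symmetry between the $A$-blocks. The case $m<0$ should proceed along the same lines after moving the initial line of integration to $\Re(s)>-2m$.
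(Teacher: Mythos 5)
Your proposal is correct, and the computations I can check (the Mellin representation of $\Omega_{\alpha}$, the residue $g'(0)-2\gamma g(0)$ at the double pole $s=0$, the reflection $F_{\alpha}(-s-2m)=(-1)^m(\alpha/\pi)^{4m}F_{\beta}(s)$, and the conversion of $R_{1-2k}$ into the Bernoulli terms via Euler's formula) all come out as you claim. The route differs slightly from the paper's: the paper never proves Theorem \ref{zetasquared} directly, but instead establishes the one-parameter generalization (Theorem \ref{Main Theorem}) by the same contour-shift argument applied to the integrand $\zeta(1-s+\tfrac{z}{2})\zeta(1-s-\tfrac{z}{2})\zeta(2m+1+s-\tfrac{z}{2})\zeta(2m+1+s+\tfrac{z}{2})/(2\cos(\tfrac{\pi}{2}(s+\tfrac{z}{2})))$, where for $z\neq 0$ all poles are simple, and then recovers Theorem \ref{zetasquared} by a careful limit $z\to 0$ in which pairs of simple-pole contributions coalesce (this is where $\gamma$, $\log(\alpha/\pi)$ and $\zeta'(2m+1)/\zeta(2m+1)$ emerge from cancelling $1/z$ singularities). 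Your direct treatment of the double poles at $s=0$ and $s=-2m$ trades that limiting computation for a Laurent-expansion bookkeeping exercise and is, if anything, shorter for this particular theorem; what it does not buy you is the general parameter $z$, which the paper needs for its other corollaries. One small point worth making explicit in a write-up: for $m<0$ the surviving poles of $1/\cos(\pi s/2)$ must be identified carefully (the poles at $s=3,5,\dots$ are killed by the zeros of $\zeta^2(1-s)$, and the pole at $s=1$ by $\zeta^2(2m+2)$ when $m\le -2$), which is exactly why the finite Bernoulli sum degenerates in that range.
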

\noindent
\begin{remark}\label{conv}
Note that while the infinite series in \eqref{zetaodd} can be rephrased as
\begin{equation*}
\sum_{n=1}^{\infty}\frac{n^{-2m-1}}{e^{2\a n}-1}=\sum_{n, j=1}^{\infty}n^{-2m-1}e^{-2jn\a}=\sum_{k=1}^{\infty}\sigma_{-(2m+1)}(k)e^{-2k\a},
\end{equation*}
where $\sigma_{w}(n):=\sum_{d|n}d^{w}$, the corresponding double series $\sum_{n=1}^{\infty} \frac{d(n)\Omega_{\alpha}(n)}{n^{2m+1}}$ in \eqref{zetasquaredeqn} can also be represented in the form of a single series:
\begin{align*}
\sum_{n=1}^{\infty} \frac{d(n)\Omega_{\alpha}(n)}{n^{2m+1}}&=2\sum_{n, j=1}^{\infty}n^{-2m-1}d(n)d(j)\left(K_{0}(4\a\epsilon\sqrt{jn})+K_{0}(4\a\overline{\epsilon}\sqrt{jn})\right)\\
&=2\sum_{k=1}^{\infty}b_m(k)\left(K_{0}(4\a\epsilon\sqrt{k})+K_{0}(4\a\overline{\epsilon}\sqrt{k})\right),
\end{align*}
where $b_m(k):=\sum_{n|k}n^{-2m-1}d(n)d(k/n)$. Note that for $m\geq 0$ and any $\delta>0$,
\begin{align}
b_m(k)\leq \sum_{n|k}d(n)d(k/n)<\!\!<k^{\frac{\delta}{2}}d(k)<\!\!<k^{\delta},
\end{align}
using repeatedly the fact \cite[p.~343, Theorem 315]{hardywright} that $d(n)=O\left(n^{\delta}\right)$ for any $\delta>0$. Similarly for $m<0$, 
\begin{align*}
b_m(k)\leq k^{-2m-1}\sum_{n|k}d(n)d(k/n)<\!\!<k^{-2m-1+\delta}.
\end{align*}
Thus, for $\delta>0$, \textup{Re}$(\a)>0$, $m\in\mathbb{Z}$ and $\tau=\max\{-2m-1,0\}$,
\begin{align*}
\left|\sum_{n=1}^{\infty} \frac{d(n)\Omega_{\alpha}(n)}{n^{2m+1}}\right|&<\!\!<\sum_{k=1}^{\infty}k^{\tau+\delta}\left(\left|K_{0}(4\a\epsilon\sqrt{k})\right|+\left|K_{0}(4\a\overline{\epsilon}\sqrt{k})\right|\right)\nonumber\\
&<\!\!<\sum_{k=1}^{\infty}k^{\tau+\delta-\frac{1}{4}}\exp{\left(-2\a\sqrt{2k}\right)},
\end{align*}
where in the last step we used the asymptotic formula \cite[p.~920, \textbf{8.451.6}]{gr}
\begin{equation*}
K_z(w)\sim\sqrt{\frac{\pi}{2w}}e^{-w}
\end{equation*}
as $w\to\infty$. Moreover, using \eqref{imtg} below with $z=0$ and $\ell>\max\{-2m,1\}, m\in\mathbb{Z}$, and using \eqref{strivert} and the fact that $\zeta(s)=O(1)$ for \textup{Re}$(s)>1$, it can be seen that $|\Omega_{\rho}(x)|<\!\!<_{\rho}x^{-\ell}$, which implies that the series $\sum_{n=1}^{\infty} \frac{d(n)\Omega_{\alpha}(n)}{n^{2m+1}}$ converges absolutely for every integer $m$.
\end{remark}

Let $\a=\b=\pi$ and $m$ odd in Theorem \ref{zetasquared}, and note from \eqref{minusspl} that $\Omega_{\pi}(n)=\Omega(n)$. This gives an analogue of Lerch's formula \eqref{lerchh}:
\begin{align}\label{lerchhh}
\zeta^{2}(2m+1)\left(\g-\frac{\zeta'(2m+1)}{\zeta(2m+1)}\right)+\sum_{n=1}^{\infty}\frac{d(n)\Omega(n)}{n^{2m+1}}=\pi^{4m+3}2^{4m-1}\sum_{j=0}^{m+1}\frac{(-1)^{j+1}B^2_{2j}B^2_{2m+2-2j}}{((2j)!)^2((2m+2-2j)!)^2}.
\end{align}
Actually we will derive two different generalizations of Theorem \ref{zetasquared} and then derive Theorem \ref{zetasquared} as their corollary.

The first generalization is concerned with the more general function $\Omega_{\rho}(x, z)$ defined by
\begin{align}\label{x,z}
\Omega_{\rho}(x,z):=2\sum_{j=1}^{\infty} \sigma_{-z}(j)j^{\frac{z}{2}}\left(e^{\pi i z/4}K_{z}\left(4 \rho \epsilon\sqrt{j x}\right)+e^{-\pi i z/4}K_{z}\left(4 \rho\overline{\epsilon} \sqrt{j x}\right)\right),
\end{align}
so that $\Omega_{\rho}(x,0)=\Omega_{\rho}(x)$. The special case $\Omega_{\pi}(x,z)$, which we will denote by $\Omega(x, z)$, was introduced in \cite[Equation (6.5)]{dixitmoll} and was instrumental in obtaining a simple proof of the Vorono\"{\dotlessi} summation formula associated with $\sigma_z(n)$ for analytic functions \cite[Section 6]{bdrz1}.

Our first generalization of Theorem \ref{zetasquared} is given in the following theorem.
\begin{theorem}\label{Main Theorem}
Let $m$ be a non-zero integer. Let $\alpha, \beta>0$ and $\alpha \beta =\pi^2$. Let $\mathfrak{B}:=\{0,\pm 2m\}\cup\{2\ell+1\}_{\ell=-\infty}^{\infty}$. Then for $z \in \mathbb{C}\backslash\mathfrak{B}$,
{\allowdisplaybreaks\begin{align}\label{maintheoremeqn}
&(\alpha^2)^{-m}\Bigg\{\frac{1}{2}\zeta(2m+1)\left(\Big( \frac{\alpha}{\pi} \Big)^{z}\zeta(2m+1-z)\zeta(1+z)+\Big( \frac{\alpha}{\pi} \Big)^{-z}\frac{\zeta(2m+1+z)\zeta(1-z)}{\cos\Big(\frac{\pi z}{2}\Big)}\right)\nonumber\\
&\qquad \qquad+\sum_{n=1}^{\infty}\frac{\sigma_{-z}(n)n^{z/2}\Omega_{\alpha}(n,z)}{n^{2m+1}} \Bigg\}\nonumber\\
&=(-\beta^2)^{-m}\Bigg\{\frac{1}{2}\zeta(2m+1)\left(\Big( \frac{\beta}{\pi} \Big)^{-z}\zeta(2m+1+z)\zeta(1-z)+\Big( \frac{\beta}{\pi} \Big)^{z}\frac{\zeta(2m+1-z)\zeta(1+z)}{\cos\Big(\frac{\pi z}{2}\Big)}\right)\nonumber\\
&\qquad \qquad\qquad+\sum_{n=1}^{\infty}\frac{\sigma_{z}(n)n^{-z/2}\Omega_{\beta}(n,-z)}{n^{2m+1}}\Bigg\}\nonumber\\
& \quad  + (-1)^m\pi 2^{2m}\sum_{j=0}^{m+1}\frac{(-1)^{j}B_{2j}B_{2m+2-2j}\hspace{1mm}\zeta(2m+2-2j-z)\zeta(2j+z)}{(2j)!(2m+2-2j)!} \Big( \frac{\alpha}{\beta}\Big)^{-1-m+2j+z/2}.
\end{align}}
\end{theorem}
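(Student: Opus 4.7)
The plan is a Mellin--Barnes contour integral argument, generalizing Koshliakov's approach to \eqref{hsok1}. The first step is to obtain a Mellin--Barnes representation of $\Omega_\rho(x,z)$ analogous to the one for $\Omega(x)=\Omega_\pi(x,0)$ quoted in the introduction. Using the Mellin--Barnes integral $K_z(u)=\frac{1}{8\pi i}\int_{(c)}\Gamma(\tfrac{s+z}{2})\Gamma(\tfrac{s-z}{2})(u/2)^{-s}\,ds$, one combines the two summands in \eqref{x,z} to produce a trigonometric factor, sums against $\sigma_{-z}(j)j^{z/2}$ via $\sum_{j}\sigma_{-z}(j)j^{-w}=\zeta(w)\zeta(w+z)$, and applies the functional equation of $\zeta$ together with the duplication and reflection identities for sine and cosine; this collapses all the gamma and trigonometric data into a clean representation of $\Omega_\rho(x,z)$ as a contour integral whose integrand is a product of two $\zeta$ factors divided by a cosine, multiplied by an appropriate power of $\pi^{2}/(\rho^{2}x)$.

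Substituting this representation into $\sum_n \sigma_{-z}(n)n^{z/2-2m-1}\Omega_\alpha(n,z)$ and interchanging summation with integration (justified by the bounds in Remark \ref{conv}), a second application of the Dirichlet series for $\sigma_{-z}$ introduces two further $\zeta$ factors, producing a contour integral whose integrand is a product of four $\zeta$ functions divided by a cosine, multiplied by a power of $\pi^{2}/\alpha^{2}$. A direct computation applying the functional equation of $\zeta$ to all four factors, combined with $\alpha\beta=\pi^{2}$ and the elementary identity $\sigma_{z}(n)n^{-z/2}=\sigma_{-z}(n)n^{z/2}$, exhibits a symmetry $s\mapsto -s$ under which the $\alpha$-integrand transforms, up to the overall prefactor $(-\beta^{2}/\alpha^{2})^{m}$, into the analogous $\beta$-integrand built from $\Omega_\beta(n,-z)$. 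Shifting the line of integration from $\operatorname{Re} s = c$ to its mirror line $\operatorname{Re} s = -c$ therefore turns the $\alpha$-integral into the $\beta$-integral, at the cost of the residues collected in between.

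These intermediate residues split into two types. At the simple poles coming from the four $\zeta$ factors, the remaining $\zeta$ evaluations combine with the trigonometric denominator evaluated at the pole to produce exactly the four bracketed $\frac{1}{2}\zeta(2m+1)\zeta(2m+1\pm z)\zeta(1\mp z)$ correction terms on both sides of \eqref{maintheoremeqn}, with the factor $1/\cos(\pi z/2)$ appearing in half of them arising precisely from the cosine denominator evaluated at $s=\pm z$; the split between $\alpha$-side and $\beta$-side corrections corresponds to whether a pole is crossed before or after the reflection. At the simple poles of $1/\cos(\pi s/2)$ at odd integers $s=2\ell+1$ lying in a finite range swept by the contour shift, two of the four $\zeta$ evaluations land at non-positive even integers and are turned into Bernoulli numbers via $\zeta(1-2k)=-B_{2k}/(2k)$ and Euler's formula \eqref{zetaevenint}, while the remaining two assemble into $\zeta(2j+z)\zeta(2m+2-2j-z)$; the power of $\pi^{2}/\alpha^{2}=\beta/\alpha$ then yields the exponent $(\alpha/\beta)^{-1-m+2j+z/2}$ of the Bernoulli-product sum in \eqref{maintheoremeqn}. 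The exclusion set $\mathfrak{B}$ is precisely what is required for all these poles to be simple and mutually distinct.

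The main obstacle will be the bookkeeping of residues and the verification that the contributions assemble into exactly the form printed in \eqref{maintheoremeqn}, including matching the signs $(-1)^{j}$ and $(-1)^{m}$, the combinatorial factor $(2j)!(2m+2-2j)!$, and the unusual exponent $(\alpha/\beta)^{-1-m+2j+z/2}$. A secondary but essential technicality is justifying the contour shift across a finite but growing family of poles: this requires standard vertical-strip bounds on $\zeta$ combined with Stirling's formula applied to the Gamma factors hidden inside the functional equation, to ensure that the horizontal segments of the rectangular contour vanish as $|\operatorname{Im} s|\to\infty$.
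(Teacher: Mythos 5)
Your overall architecture is exactly the paper's: a Mellin--Barnes representation of $\Omega_{\rho}(x,z)$ with integrand $\frac{\zeta(1-s+\frac{z}{2})\zeta(1-s-\frac{z}{2})}{2\cos(\frac{\pi}{2}(s+\frac{z}{2}))}\left(\rho^2x/\pi^2\right)^{-s}$ (this is the paper's Lemma \ref{omalxz}), termwise summation producing a four-zeta integrand, a contour shift, and residue bookkeeping. However, the reflection you propose is wrong, and that step as written would fail. The four-zeta product $G(s)=\zeta(2m+1+s-\frac{z}{2})\zeta(2m+1+s+\frac{z}{2})\zeta(1-s+\frac{z}{2})\zeta(1-s-\frac{z}{2})$ is \emph{not} symmetric under $s\mapsto -s$ when $m\neq 0$; its symmetry is $s\mapsto -2m-s$, which swaps $1-s\pm\frac{z}{2}$ with $2m+1+s\pm\frac{z}{2}$. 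Applying the functional equation to all four factors after $s\mapsto -s$ does not repair this: it sends $\zeta(2m+1-s-\frac{z}{2})$ to (a multiple of) $\zeta(-2m+s+\frac{z}{2})$, not to $\zeta(2m+1+s+\frac{z}{2})$. Consequently the target line must be near $\Re(s)=-2m-c$ (the paper shifts to $\Re(s)=-\lambda$ with $2m+1+\Re(z/2)<\lambda<2m+3+\Re(z/2)$), not the mirror line $\Re(s)=-c$. With your choice of line, for $m\geq 1$ you would fail to cross the poles at $s=-2m\pm z/2$ (which supply the $\beta$-side correction terms) and all but the first of the cosine poles at $s=1-2k-\frac{z}{2}$, $0\leq k\leq m+1$ (which supply the full Bernoulli sum $\sum_{j=0}^{m+1}$), and the leftover integral would not be identifiable with the $\Omega_{\beta}(n,-z)$ series.

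Once the substitution $s=-2m-w$ is used instead, the rest of your outline goes through as in the paper: $\cos\big(\frac{\pi}{2}(-2m-w+\frac{z}{2})\big)=(-1)^m\cos\big(\frac{\pi}{2}(w-\frac{z}{2})\big)$ produces the sign $(-1)^m$; $\left(\alpha^2/\pi^2\right)^{2m+w}=(\alpha/\pi)^{4m}\left(\beta^2/\pi^2\right)^{-w}$ (using $\alpha\beta=\pi^2$) produces the prefactor $(-1)^m(\alpha/\pi)^{4m}=(-\alpha^2/\beta^2)^m$ (note: the reciprocal of the one you wrote) together with the $\beta$-integrand; and the residues at $s=\pm z/2$, $s=-2m\pm z/2$ and $s=1-2k-\frac{z}{2}$ give the bracketed correction terms and the Bernoulli sum. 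Two small further corrections: the cosine zeros sit at $s=2\ell+1-\frac{z}{2}$, not at $s=2\ell+1$; and the zeta factors converted to Bernoulli numbers there are $\zeta(2k)$ and $\zeta(2m+2-2k)$ at \emph{non-negative even} integers, handled by Euler's formula \eqref{zetaevenint} rather than by $\zeta(1-2k)=-B_{2k}/(2k)$.
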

\begin{remark}
The absolute convergence of the infinite series occurring in the above theorem can be proved in the same way as explained in Remark \ref{conv}.
\end{remark}
\begin{remark}
Note that in Theorem \ref{Main Theorem}, we cannot take $z$ to be an odd integer, say, $2\ell+1,-\infty<\ell<\infty$. However, the limiting case $z\to2\ell+1$ for integer values of $\ell$, seen relative to $m$ occurring in the above theorem, has been considered in Corollaries \ref{cor2ell+1}, \ref{cor2m-1,4m-1} and \ref{alphabeta-1}, with one omitted case discussed in Remark \ref{avoided}. Similarly, the limiting cases $z\to 0$ and $z\to\pm 2m, m\neq 0$ are dealt with in Theorem \ref{zetasquared} and Corollary \ref{2m-onecor} respectively.
\end{remark}
\noindent
It is easy to see that the above transformation is invariant if we simultaneously replace $\a$ by $\b$ and $z$ by $-z$. 

There are numerous corollaries that follow from the above theorem. These are given in the Section \ref{withz}. However, we highlight a few of them here. We begin by defining two auxiliary functions
\begin{align}\label{l1}
\L^{\pm}(x, z)&:=2\left(e^{\frac{i\pi z}{4}}\pm e^{-\frac{i\pi z}{4}}\right)\sum_{j=1}^{\infty}\sigma_{-z}(j)j^{\frac{z}{2}}\left(K_{z}\left(4\pi\epsilon\sqrt{jx}\right)\pm K_{z}\left(4\pi\overline{\epsilon}\sqrt{jx}\right)\right).
\end{align}
Since $K_{-z}(w)=K_z(w)$, it is easy to see that
\begin{align*}
\L^{\pm}(x, z)&=\Omega(x, z)\pm\Omega(x,-z).
\end{align*}
A result that we infer from Theorem \ref{Main Theorem} is now given.
\begin{theorem}\label{zetazetathm}
For $z\neq 0, \pm 2, 2\ell+1$, where $-\infty<\ell<\infty$,
\begin{align}\label{zetazeta}
\sum_{n=1}^{\infty}\sigma_{-z}(n)n^{1+\frac{z}{2}}\L^{+}(n, z)&=\frac{1}{24}\left(1+\sec\left(\tfrac{\pi z}{2}\right)\right)\left(\zeta(-1-z)\zeta(1+z)+\zeta(-1+z)\zeta(1-z)\right)\nonumber\\
&\quad-\frac{1}{4\pi}\zeta(-z)\zeta(z).
\end{align}
\end{theorem}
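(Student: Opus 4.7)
The plan is to derive Theorem \ref{zetazetathm} as a direct corollary of Theorem \ref{Main Theorem} by the special choice $m=-1$ and $\a=\b=\pi$. The telltale sign is the weight $\sigma_{-z}(n)n^{1+z/2}$ on the left together with the combination $\L^{+}(n,z)=\Omega(n,z)+\Omega(n,-z)$ recorded just before the theorem: comparing with \eqref{maintheoremeqn}, this is precisely what one obtains by adding the two infinite Bessel series appearing there once $\a=\b=\pi$ (so that $\Omega_{\pi}=\Omega$) and $m=-1$ (so that $n^{-(2m+1)}=n$, and $\sum_{j=0}^{m+1}$ collapses to the single term $j=0$).

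With these choices the outer factors become $(\a^{2})^{-m}=\pi^{2}$ and $(-\b^{2})^{-m}=-\pi^{2}$. Invoking the elementary identity $\sigma_{z}(n)=n^{z}\sigma_{-z}(n)$, the Bessel series on the right of \eqref{maintheoremeqn} acquires the same weight $\sigma_{-z}(n)n^{1+z/2}$ as that on the left; transposing it to the left combines the two into $\pi^{2}\sum_{n=1}^{\infty}\sigma_{-z}(n)n^{1+z/2}\L^{+}(n,z)$. The four zeta products, all carrying $(\a/\pi)^{\pm z}=(\b/\pi)^{\pm z}=1$, regroup as $\pi^{2}\cdot\tfrac{1}{2}\zeta(-1)[\zeta(-1-z)\zeta(1+z)+\zeta(-1+z)\zeta(1-z)](1+\sec(\tfrac{\pi z}{2}))$; inserting $\zeta(-1)=-\tfrac{1}{12}$ produces the coefficient $-\pi^{2}/24$.

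Finally, with $m+1=0$ the finite sum reduces to the lone index $j=0$: the prefactor $(-1)^{-1}\pi\cdot 2^{-2}=-\pi/4$, combined with $B_{0}=1$, $(0!)^{2}=1$, $\zeta(2m+2-z)=\zeta(-z)$, and $(\a/\b)^{z/2}=1$, contributes exactly $-\tfrac{\pi}{4}\zeta(-z)\zeta(z)$. Collecting all pieces and dividing through by $\pi^{2}$ yields the identity of Theorem \ref{zetazetathm}. The hypothesis $z\notin\{0,\pm 2, 2\ell+1\}$ matches the excluded set $\{0,\pm 2m\}\cup\{2\ell+1\}$ at $m=-1$, so the hypotheses align. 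No genuine obstacle arises: the proof is essentially bookkeeping, the only care points being (i) reconciling the two Bessel summands via $\sigma_{z}(n)n^{-z/2}=\sigma_{-z}(n)n^{z/2}$, and (ii) recognizing that the four zeta products pair symmetrically to expose the factor $1+\sec(\pi z/2)$.
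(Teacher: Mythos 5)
Your proposal is correct and is exactly the paper's own proof: the authors likewise set $m=-1$ and $\a=\b=\pi$ in Theorem \ref{Main Theorem}, use $\zeta(-1)=-\tfrac{1}{12}$, and merge the two Bessel series into $\sum_{n=1}^{\infty}\sigma_{-z}(n)n^{1+z/2}\L^{+}(n,z)$ via $\sigma_{z}(n)n^{-z/2}=\sigma_{-z}(n)n^{z/2}$. The bookkeeping of the finite sum (only $j=0$ survives, giving $-\tfrac{\pi}{4}\zeta(-z)\zeta(z)$) and the pairing that exposes $1+\sec(\pi z/2)$ are all as in the paper.
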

\begin{remark}
Let $z$ be a real number. On account of the Schwarz reflection principle, we have all of the zeta values on the right-hand side of the above equation to be real quantities. Hence, in this case, the series $\sum_{n=1}^{\infty}\sigma_{-z}(n)n^{1+\frac{z}{2}}\L^{+}(n, z)$ is real.
\end{remark}
The above theorem gives an interesting result when we specialize $z$ to be a zero of the Riemann zeta function.
\begin{corollary}\label{zeros}
\textup{(i)} Let $\omega$ denote a non-trivial zero of $\zeta(z)$. Then
\begin{align*}
\sum_{n=1}^{\infty}\sigma_{-\omega}(n)n^{1+\frac{\omega}{2}}\L^{+}(n, \omega)=\frac{1}{24}\left(1+\sec\left(\tfrac{\pi \omega}{2}\right)\right)\zeta(-1-\omega)\zeta(1+\omega).
\end{align*}
\textup{(ii)} For $k\in\mathbb{N}$,
\begin{align*}
\sum_{n=1}^{\infty}\sigma_{4k}(n)n^{1-2k}\L^{+}(n, -4k)=-\frac{1}{12}\left(\frac{B_{4k}}{4k}\zeta(4k-1)+\frac{B_{4k+2}}{4k+2}\zeta(4k+1)\right).
\end{align*}
\end{corollary}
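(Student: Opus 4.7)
The plan is to obtain both parts as direct specializations of Theorem \ref{zetazetathm}, using the fact that certain zeta factors on the right-hand side of \eqref{zetazeta} vanish at the chosen values of $z$.

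For part (i), I would set $z=\omega$, where $\omega$ is a non-trivial zero of $\zeta$. Since every non-trivial zero lies strictly inside the critical strip $0<\textup{Re}(\omega)<1$, the excluded set $\{0,\pm 2\}\cup\{2\ell+1:\ell\in\mathbb{Z}\}$ is automatically avoided, so Theorem \ref{zetazetathm} applies. Two observations then collapse the right-hand side of \eqref{zetazeta}: first, $\zeta(\omega)=0$ kills the term $-\tfrac{1}{4\pi}\zeta(-z)\zeta(z)$; second, the functional equation of $\zeta$ implies that $1-\omega$ is also a non-trivial zero, so $\zeta(1-\omega)=0$, which kills $\zeta(-1+z)\zeta(1-z)$. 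The sole surviving term is $\tfrac{1}{24}\bigl(1+\sec(\pi\omega/2)\bigr)\zeta(-1-\omega)\zeta(1+\omega)$, matching the claim.

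For part (ii), I would set $z=-4k$ with $k\in\mathbb{N}$. Since $-4k$ is a negative even integer distinct from $0,\pm 2$ and not of the form $2\ell+1$, the theorem applies. The trivial zero $\zeta(-4k)=0$ again kills the last term of \eqref{zetazeta}. A direct computation gives $\sec(\pi z/2)=\sec(2k\pi)=1$, so the prefactor becomes $\tfrac{1}{24}\cdot 2=\tfrac{1}{12}$. I would then invoke the standard values $\zeta(1-n)=-B_n/n$ for $n\in\mathbb{N}$ to write
\begin{equation*}
\zeta(1-4k)=-\frac{B_{4k}}{4k},\qquad \zeta(-1-4k)=\zeta(1-(4k+2))=-\frac{B_{4k+2}}{4k+2},
\end{equation*}
and substitute into
\begin{equation*}
\zeta(-1-z)\zeta(1+z)+\zeta(-1+z)\zeta(1-z)=\zeta(4k-1)\zeta(1-4k)+\zeta(-1-4k)\zeta(4k+1)
\end{equation*}
to recover the stated expression. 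A brief check shows that with $z=-4k$ the left-hand side of the theorem becomes $\sum_{n=1}^{\infty}\sigma_{4k}(n)n^{1-2k}\L^{+}(n,-4k)$, as desired.

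There is no serious obstacle here: the corollary is essentially a bookkeeping exercise tracking which zeta values in \eqref{zetazeta} vanish at the selected $z$. The only slightly delicate step is in part (i), where one must use the functional equation to convert the assumption ``$\omega$ is a non-trivial zero'' into the simultaneous vanishing of $\zeta(\omega)$ and $\zeta(1-\omega)$, needed to eliminate two of the three terms on the right-hand side.
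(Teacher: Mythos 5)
Your proof is correct and follows the same route as the paper: specialize $z$ in Theorem \ref{zetazetathm}, use $\zeta(\omega)=\zeta(1-\omega)=0$ (via the functional equation) for part (i), and $\zeta(-4k)=0$ together with $\zeta(1-n)=-B_n/n$ for part (ii). Your extra checks (that the excluded set $\mathfrak{B}$-type values are avoided and that $\sec(-2k\pi)=1$) are fine but not substantively different from the paper's argument.
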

A special case of Part (ii) of Corollary \ref{zeros}, in turn, gives the following important corollary:
\begin{corollary}\label{aperyapp}
We have,
\begin{align}\label{aperyappeqn}
\sum_{n=1}^{\infty}\frac{\sigma_{4}(n)}{n}\L^{+}(n,-4)=\frac{1}{12}\left(\frac{1}{120}\zeta(3)-\frac{1}{252}\zeta(5)\right).
\end{align}
Thus, at least one of the quantities $\zeta(5)$ and $\sum_{n=1}^{\infty}\frac{\sigma_{4}(n)}{n}\L^{+}(n,-4)$ is irrational.
\end{corollary}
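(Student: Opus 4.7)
The plan is to obtain the identity \eqref{aperyappeqn} as the $k=1$ specialization of Part (ii) of Corollary \ref{zeros}, and then to extract the irrationality statement from Apéry's theorem. Setting $k=1$ in the identity of Corollary \ref{zeros}(ii) gives
\begin{equation*}
\sum_{n=1}^{\infty}\frac{\sigma_{4}(n)}{n}\L^{+}(n,-4)=-\frac{1}{12}\left(\frac{B_{4}}{4}\zeta(3)+\frac{B_{6}}{6}\zeta(5)\right).
\end{equation*}
Inserting the standard values $B_4=-\tfrac{1}{30}$ and $B_6=\tfrac{1}{42}$, so that $B_4/4=-\tfrac{1}{120}$ and $B_6/6=\tfrac{1}{252}$, the right-hand side simplifies to $\tfrac{1}{12}\left(\tfrac{1}{120}\zeta(3)-\tfrac{1}{252}\zeta(5)\right)$, which is exactly \eqref{aperyappeqn}.

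For the irrationality assertion, I would argue by contradiction. Suppose that both $\zeta(5)$ and $S:=\sum_{n=1}^{\infty}\tfrac{\sigma_{4}(n)}{n}\L^{+}(n,-4)$ are rational. Solving \eqref{aperyappeqn} for $\zeta(3)$ yields
\begin{equation*}
\zeta(3)=120\left(12\,S+\tfrac{1}{252}\zeta(5)\right)\in\mathbb{Q},
\end{equation*}
contradicting Apéry's theorem that $\zeta(3)\notin\mathbb{Q}$. Hence at least one of $\zeta(5)$ and $S$ must be irrational.

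There is no genuine obstacle here: all of the analytic content sits in Corollary \ref{zeros}(ii), which is itself a specialization of Theorem \ref{zetazetathm}, and the present corollary is just its $k=1$ case combined with a one-line appeal to Apéry's theorem. The interest of the statement is conceptual---it reduces the long-standing open question of the irrationality of $\zeta(5)$ to the arithmetic nature of an explicit, rapidly convergent series built from the divisor function and the $K_{0}$ Bessel function.
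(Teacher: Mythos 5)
Your proposal is correct and follows exactly the paper's own route: specialize Corollary \ref{zeros}(ii) to $k=1$, insert $B_4=-\tfrac{1}{30}$ and $B_6=\tfrac{1}{42}$, and deduce the irrationality claim by contradiction from Ap\'ery's theorem on $\zeta(3)$. Nothing further is needed.
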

It is widely believed \cite[Conjecture 27]{waldschmidt} that for any $n  \in \mathbb{N}$, and any non-zero polynomial $P \in \mathbb{Q}[x_0, x_1, \cdots, x_n]$, $P(\pi, \zeta(3), \zeta(5), \cdots, \zeta(2n+1)) \neq 0 $, that is, $\pi$ and all odd zeta values are algebraically independent over $\mathbb{Q}$. In particular, if proven true, it would imply that $\zeta(3)$ and $\zeta(5)$ are algebraically independent over $\mathbb{Q}$. In light of this, it is highly probable that the series in \eqref{aperyappeqn} is transcendental, and hence irrational. 
However, the conjecture on algebraic independence of the odd zeta values has not yet been proved even in the case of finitely many odd zeta values, or, even just $\zeta(3)$ and $\zeta(5)$. Hence it would be phenomenal if the series in \eqref{aperyappeqn} turns out to be rational, for then it would prove that $\zeta(5)$ is irrational. However, it seems unlikely that the series would be rational.

As will be shown in Section \ref{withz}, the limiting case $z\to \pm 1$ of Theorem \ref{zetazetathm} given below links four important constants, namely, $\pi$, Euler's constant $\g$, $\zeta(3)$ and the Glaisher-Kinkelin constant $A$ defined by \cite[p.~39, Equation (2)]{srichoi}:
\begin{equation}\label{gkc}
\log(A)=\lim_{n\to\infty}\left\{\sum_{k=1}^{n}k\log(k)-\left(\frac{n^2}{2}+\frac{n}{2}+\frac{1}{12}\right)\log(n)+\frac{n^2}{4}\right\}.
\end{equation}
\begin{corollary}\label{gk}
Let $\sigma(n):=\sum_{d|n}d$. Then
\begin{align}\label{four}
\sum_{n=1}^{\infty}\sigma(n)\sqrt{n}\hspace{1mm}\L^{+}(n, 1)=\frac{6+6\g+3\pi-72\log(A)-\zeta(3)}{288\pi}.
\end{align}
\end{corollary}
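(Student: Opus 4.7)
The plan is to obtain Corollary \ref{gk} as the limiting case $z\to 1$ of Theorem \ref{zetazetathm}. Both sides of that theorem are singular at $z=1$---through the pole of $\sec(\pi z/2)$ and of $\z(z)$---so the strategy is to carry out Laurent expansions at $z=1$ on the right-hand side, verify that the two simple poles cancel exactly, and read off the finite part. On the left-hand side, since $\sigma_{-1}(n)=\sigma(n)/n$, the coefficient $\sigma_{-z}(n)n^{1+z/2}$ tends to $\sigma(n)\sqrt{n}$ and $\L^{+}(n,z)$ is continuous at $z=1$; the uniform asymptotic $K_{z}(w)\sim\sqrt{\pi/(2w)}\,e^{-w}$ on compact $z$-neighbourhoods of $1$ provides a summable majorant, so that the limit may be interchanged with the summation to give $\sum_{n=1}^{\infty}\sigma(n)\sqrt{n}\,\L^{+}(n,1)$.

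For the right-hand side, set $z=1+\e$ with $\e\to 0$. The required Laurent expansions are
\begin{align*}
1+\sec(\pi z/2)&=1-\tfrac{2}{\pi\e}+O(\e),\\
\z(-1-z)\,\z(1+z)&=-\e\,\z'(-2)\,\z(2)+O(\e^{2})=\tfrac{\e\,\z(3)}{24}+O(\e^{2}),\\
\z(-1+z)\,\z(1-z)&=\z(\e)\,\z(-\e)=\tfrac{1}{4}+O(\e^{2}),\\
\z(-z)\,\z(z)&=-\tfrac{1}{12\e}-\tfrac{\g}{12}-\tfrac{1}{12}+\log A+O(\e),
\end{align*}
where we use $\z'(-2)=-\z(3)/(4\pi^{2})$, obtained by differentiating the functional equation of $\z(s)$ at $s=-2$, together with $\z'(-1)=\tfrac{1}{12}-\log A$, which comes from the defining relation \eqref{gkc} of the Glaisher--Kinkelin constant. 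The leading pole $-\tfrac{1}{48\pi\e}$ arising from $\tfrac{1}{24}(1+\sec(\pi z/2))\cdot\tfrac{1}{4}$ then cancels exactly against the pole $+\tfrac{1}{48\pi\e}$ produced by $-\tfrac{1}{4\pi}\z(-z)\z(z)$, and assembling the finite pieces gives
\begin{equation*}
\frac{1}{96}+\frac{\g}{48\pi}+\frac{1}{48\pi}-\frac{\z(3)}{288\pi}-\frac{\log A}{4\pi}=\frac{6+6\g+3\pi-72\log(A)-\z(3)}{288\pi},
\end{equation*}
which is exactly \eqref{four}.

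The main obstacle is essentially bookkeeping: the Laurent expansions must be tracked through first order in $\e$ in order to witness both the pole cancellation and the residual constants simultaneously. Conceptually, the appearance of $\z(3)$ and $\log A$ in \eqref{four} is driven entirely by the two identities $\z'(-2)=-\z(3)/(4\pi^{2})$ and $\z'(-1)=\tfrac{1}{12}-\log A$; the remaining elementary constants $\frac{1}{96}$, $\frac{\g}{48\pi}$, and $\frac{1}{48\pi}$ arise from the Taylor coefficients $\z(0)=-\tfrac{1}{2}$ and the residue and first Laurent coefficient of $\z(s)$ at $s=1$.
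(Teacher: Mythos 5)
Your proposal is correct and is essentially the paper's own route: the paper obtains \eqref{four} by setting $\a=\b=\pi$ in Corollary \ref{alphabeta-1}, which is itself the limit $z\to-1$, $m=-1$ of Theorem \ref{Main Theorem}, so the only difference is the order in which the specializations $\a=\b=\pi$ and $z\to\pm1$ are performed (the two limits agree by the $(\a,\b,z)\mapsto(\b,\a,-z)$ symmetry). Your Laurent expansions, the pole cancellation between $\tfrac{1}{24}\left(1+\sec\left(\tfrac{\pi z}{2}\right)\right)\zeta(z-1)\zeta(1-z)$ and $-\tfrac{1}{4\pi}\zeta(-z)\zeta(z)$, and the final arithmetic all check out.
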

Another generalization of \eqref{four}, different from Theorem \ref{zetazetathm}, is given in Corollary \ref{alphabeta-1} of Section \ref{withz}. Next, we give analogues of \eqref{mg1}-\eqref{m0}.
\begin{theorem}\label{mg1sqcor}
For a natural number $m>1, \a>0, \b>0$ and $\a\b=\pi^2$,
\begin{align}\label{mg1sq}
&(\alpha^2)^{m}\sum_{n=1}^{\infty}n^{2m-1} d(n)\Omega_{\alpha}(n)-(-\beta^2)^{m}\sum_{n=1}^{\infty}n^{2m-1} d(n)\Omega_{\beta}(n)\nonumber\\
&=-\frac{B^2_{2m}}{4m^2}\left\{(\alpha^2)^{m}\left(\log\left(\frac{\a}{2\pi^2}\right)+\sum_{k=1}^{2m-1}\frac{1}{k}+\frac{\zeta'(2m)}{\zeta(2m)}\right)-(-\beta^2)^{m}\left(\log\left(\frac{\b}{2\pi^2}\right)+\sum_{k=1}^{2m-1}\frac{1}{k}+\frac{\zeta'(2m)}{\zeta(2m)}\right)\right\}.
\end{align}
\end{theorem}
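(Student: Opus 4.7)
The plan is to derive Theorem \ref{mg1sqcor} as a direct specialization of Theorem \ref{zetasquared}. Specifically, I would apply \eqref{zetasquaredeqn} with $m$ replaced by $-m$, where now $m\geq 2$ is a natural number. Under this substitution the exponent $n^{-2m-1}$ in the Lambert-type series becomes $n^{2m-1}$, and the prefactors $(\a^2)^{-m}$ and $(-\b^2)^{-m}$ become $(\a^2)^m$ and $(-\b^2)^m$, so the series terms line up exactly with the left-hand side of \eqref{mg1sq}. The crucial observation is that the finite Bernoulli sum on the right-hand side of \eqref{zetasquaredeqn} has upper index $m+1$; after the substitution this becomes $\sum_{j=0}^{-m+1}$, which is empty for every $m\geq 2$. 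This is precisely why the hypothesis $m>1$ is the natural one here, and it ensures that no residual Bernoulli-type terms appear on the right-hand side of \eqref{mg1sq}.

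Next, I would rewrite the coefficient of $\z^{2}(-2m+1)$. The value $\z(1-2m) = -B_{2m}/(2m)$ for $m\geq 1$ yields $\z^{2}(1-2m)=B_{2m}^{2}/(4m^2)$, which accounts for the prefactor on the right-hand side of \eqref{mg1sq}. To handle the logarithmic derivative, I would take the logarithmic derivative of the functional equation $\z(s)=2^{s}\pi^{s-1}\sin(\pi s/2)\G(1-s)\z(1-s)$, yielding
\begin{equation*}
\frac{\z'(s)}{\z(s)} = \log(2\pi) + \frac{\pi}{2}\cot\Big(\frac{\pi s}{2}\Big) - \psi(1-s) - \frac{\z'(1-s)}{\z(1-s)}.
\end{equation*}
At $s=1-2m$ the cotangent term vanishes, since $\cot(\pi/2-\pi m)=\tan(\pi m)=0$, and $\psi(2m)=-\g+\sum_{k=1}^{2m-1}1/k$, so
\begin{equation*}
\frac{\z'(1-2m)}{\z(1-2m)} = \log(2\pi) + \g - \sum_{k=1}^{2m-1}\frac{1}{k} - \frac{\z'(2m)}{\z(2m)}.
\end{equation*}

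Substituting back into the combination $\g+\log(\a/\pi)-\z'(1-2m)/\z(1-2m)$ appearing in \eqref{zetasquaredeqn} (and the analogous expression with $\b$), everything telescopes into $\log(\a/(2\pi^2))+\sum_{k=1}^{2m-1}1/k+\z'(2m)/\z(2m)$. Rearranging the specialized \eqref{zetasquaredeqn} to isolate the two series, factoring out $-B_{2m}^{2}/(4m^2)$, and collecting terms then recovers \eqref{mg1sq}. The main obstacle is really just verifying that the Bernoulli finite sum is empty for $m\geq 2$; once that is in hand, the remainder is a mechanical computation using the functional equation and the standard value of $\psi$ at positive integers.
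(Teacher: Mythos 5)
Your proposal is correct and takes essentially the same route as the paper: specialize Theorem \ref{zetasquared} with $m\mapsto -m$ (so the finite Bernoulli sum is empty for $m>1$), use $\zeta(1-2m)=-B_{2m}/(2m)$, and convert $\zeta'(1-2m)/\zeta(1-2m)$ via the logarithmic derivative of the functional equation into $\log(2\pi)+\gamma-\sum_{k=1}^{2m-1}\frac{1}{k}-\frac{\zeta'(2m)}{\zeta(2m)}$. All the details you give (the vanishing cotangent term, $\psi(2m)=-\gamma+\sum_{k=1}^{2m-1}\frac{1}{k}$, and the final telescoping to $\log(\alpha/(2\pi^2))$) match the paper's argument.
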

Letting $\a=\b=\pi$, replacing $m$ by $2m+1$ in \eqref{mg1sq} gives Koshliakov's identity \eqref{hsok1}.

Also, the $m=-1$ case of Theorem \ref{zetasquared} gives an analogue of \eqref{m-1}:
\begin{theorem}\label{mg2sq}
Let $A$ be defined in \eqref{gkc}. For $\alpha, \beta>0, \a\b = \pi^2$ we have,
\begin{align*}
&\alpha^2\sum_{n=1}^{\infty}n d(n)\Omega_{\alpha}(n)+\beta^2\sum_{n=1}^{\infty}n d(n)\Omega_{\beta}(n)\\
&=-\frac{\pi}{16}-\frac{1}{144}\left\{\a^2\left(\g+\log\left(\frac{\a}{\pi}\right)+1-12\log(A)\right)+\b^2\left(\g+\log\left(\frac{\b}{\pi}\right)+1-12\log(A)\right)\right\}.
\end{align*}
\end{theorem}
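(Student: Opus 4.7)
My plan is to deduce this identity as the direct $m = -1$ specialization of Theorem \ref{zetasquared}, which is exactly the assertion made in the sentence preceding the statement. No new analytic input is required; the work is entirely a matter of substituting and identifying the resulting constants.

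First I would set $m = -1$ in \eqref{zetasquaredeqn}. The prefactors simplify to $(\alpha^2)^{-m} = \alpha^2$ and $(-\beta^2)^{-m} = -\beta^2$, and the series $\sum_{n\geq 1} d(n)\Omega_\alpha(n)/n^{2m+1}$ becomes $\sum_{n\geq 1} n\,d(n)\Omega_\alpha(n)$, with the analogous replacement on the $\beta$-side. The polynomial sum $\sum_{j=0}^{m+1}$ collapses to the single term $j = 0$; here $2m+2-2j = 0$, so $B_0 = 1$ and $(0!)^2 = 1$, and that lone term evaluates to $1$. Thus the polynomial contribution reduces to $-\pi\cdot 2^{4m} = -\pi/16$.

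Next I would evaluate the scalar factors inside the curly brackets using the classical values $\zeta(-1) = -\tfrac{1}{12}$ and $\zeta'(-1) = \tfrac{1}{12} - \log A$, the latter being the standard identification of $\zeta'(-1)$ with the Glaisher--Kinkelin constant defined in \eqref{gkc}. These give $\zeta^2(-1) = \tfrac{1}{144}$ and
$$-\frac{\zeta'(-1)}{\zeta(-1)} = 1 - 12\log A,$$
so each curly bracket in \eqref{zetasquaredeqn} becomes $\tfrac{1}{144}\bigl(\gamma + \log(\alpha/\pi) + 1 - 12\log A\bigr) + \sum_{n\geq 1} n\,d(n)\Omega_\alpha(n)$, and the analogous expression with $\beta$ in place of $\alpha$.

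Finally, assembling the pieces and moving the constant (non-Bessel) terms from both sides onto the right-hand side, the prefactor $-\beta^2$ on the $\beta$-side flips the sign of its Bessel series so that it \emph{adds} to, rather than subtracts from, the $\alpha$-series. This produces the symmetric left-hand side $\alpha^2\sum + \beta^2\sum$ of the statement, while the right-hand side accumulates $-\pi/16$ together with the constants divided by $-144$, yielding exactly the claimed formula. The proof is a bookkeeping exercise; the only subtle point is the degenerate polynomial sum at $m = -1$, where the single term with $B_0 = 1$ must not be overlooked.
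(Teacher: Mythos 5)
Your proposal is correct and coincides with the paper's own proof, which is precisely ``let $m=-1$ in Theorem \ref{zetasquared} and use $\zeta'(-1)=\tfrac{1}{12}-\log A$''; your bookkeeping of the prefactors, the value $\zeta^2(-1)=\tfrac{1}{144}$, the ratio $-\zeta'(-1)/\zeta(-1)=1-12\log A$, and the degenerate $j=0$ term contributing $-\pi/16$ is all accurate.
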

The Stieltjes constants $\g_n$ are defined by
\begin{equation}\label{stieltjesc}
\gamma_n= \lim_{j \to \infty} \bigg\{\sum_{k=1}^{j}\frac{(\log {k})^n}{k}-\frac{(\log {j})^{n+1}}{n+1} \bigg\}.
\end{equation}
The Dedekind eta function $\eta(w)$, defined by $\eta(w):=e^{2\pi iw/24}\prod_{n=1}^{\infty}(1-e^{2\pi inw}), \textup{Im}(w)>0$, satisfies a transformation formula \cite[p.~52, Theorem 3.4]{apostol2} under the general modular transformation $V(w)=(aw+b)/(cw+d), a, b, c, d\in\mathbb{Z}, ad-bc=1$. Let $\a\b=\pi^2$. When $V(w)=-1/w$, this transformation can be recast into
\begin{equation*}
\a^{1/4}e^{-\a/12}\prod_{n=1}^{\infty}(1-e^{-2\a n})=\b^{1/4}e^{-\b/12}\prod_{n=1}^{\infty}(1-e^{-2\b n}),
\end{equation*}
which is equivalent to \eqref{m0}. An analogue of \eqref{m0} is now given.
\begin{theorem}\label{dedez0}
For $\a, \b>0$ such that $\a\b=\pi^2$,
\begin{align}\label{dedez0eqn}
\sum_{n=1}^{\infty}&\frac{d(n)\Omega_{\a}(n)} {n}-\sum_{n=1}^{\infty} \frac{d(n)\Omega_{\b}(n)}{n}\nonumber\\
&=\frac{1}{144}\pi (\a^2-\b^2)+\frac{1}{48}\log\left(\frac{\a}{\b}\right)\left\{48\g^2+96\g_1-3\pi^2-4\log^{2}\left(\frac{\a}{\b}\right)\right\}.
\end{align}
\end{theorem}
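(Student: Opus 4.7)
The natural route is to re-run the Mellin--Barnes contour argument that underlies Theorem~\ref{zetasquared}, but this time \emph{at} $m=0$; the identity \eqref{zetasquaredeqn} itself cannot be evaluated at $m=0$ because of the pole of $\zeta(s)$ at $s=1$. Using the Mellin representation of $\Omega(x)$ quoted in the introduction together with the elementary scaling $\Omega_{\rho}(x)=\Omega((\rho/\pi)^{2}x)$, and interchanging sum with integral (valid for $c>1$), one obtains
\begin{equation*}
\sum_{n=1}^{\infty}\frac{d(n)\Omega_{\alpha}(n)}{n}=\frac{1}{2\pi i}\int_{(c)}\frac{\zeta^{2}(1-s)\,\zeta^{2}(1+s)}{2\cos(\pi s/2)}\left(\frac{\alpha}{\pi}\right)^{-2s}ds.
\end{equation*}
Shift the contour to $\Re(s)=-c$; the horizontal segments vanish by standard convexity bounds on $\zeta$ on vertical strips and the exponential decay of $\sec(\pi s/2)$ in $|\Im s|$. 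In the strip $-c<\Re(s)<c$, the integrand has only three poles: simple poles at $s=\pm 1$ coming from $\sec(\pi s/2)$, and a pole of order $4$ at $s=0$ where the double poles of $\zeta^{2}(1-s)$ and $\zeta^{2}(1+s)$ merge. The apparent poles at $s=\pm(2n+1)$, $n\geq 1$, are killed by the double zeros of $\zeta^{2}$ at the negative even integers.

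Next, the substitution $s\mapsto -s$ in the shifted integral, combined with $\alpha\beta=\pi^{2}$ (so $(\alpha/\pi)^{2s}=(\beta/\pi)^{-2s}$), identifies that integral with $\sum_{n\geq 1}d(n)\Omega_{\beta}(n)/n$. The residue theorem therefore gives
\begin{equation*}
\sum_{n=1}^{\infty}\frac{d(n)\Omega_{\alpha}(n)}{n}-\sum_{n=1}^{\infty}\frac{d(n)\Omega_{\beta}(n)}{n}=\Res_{s=1}+\Res_{s=0}+\Res_{s=-1}.
\end{equation*}
The residues at $s=\pm 1$ are elementary: using $\cos(\pi s/2)\sim\mp(\pi/2)(s\mp 1)$, $\zeta^{2}(0)=1/4$, $\zeta^{2}(2)=\pi^{4}/36$, and then $\alpha\beta=\pi^{2}$, one finds $\Res_{s=1}+\Res_{s=-1}=\pi(\alpha^{2}-\beta^{2})/144$, which accounts for the first term on the right-hand side of \eqref{dedez0eqn}.

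The main work is $\Res_{s=0}$. From the Laurent expansions $\zeta(1\pm s)=\pm\frac{1}{s}+\gamma\mp\gamma_{1}s+\frac{\gamma_{2}}{2}s^{2}\mp\cdots$, the product $\zeta(1-s)\zeta(1+s)$ is even in $s$ with expansion $-\frac{1}{s^{2}}+(\gamma^{2}+2\gamma_{1})+O(s^{2})$, so $\zeta^{2}(1-s)\zeta^{2}(1+s)=s^{-4}-2(\gamma^{2}+2\gamma_{1})s^{-2}+O(1)$. Combining this with $\frac{1}{2\cos(\pi s/2)}=\frac{1}{2}+\frac{\pi^{2}}{16}s^{2}+O(s^{4})$ and $(\alpha/\pi)^{-2s}=\exp(-s\log(\alpha/\beta))$ (using $2\log(\alpha/\pi)=\log(\alpha/\beta)$, from $\alpha\beta=\pi^{2}$), and reading off the coefficient of $s^{-1}$, yields
\begin{equation*}
\Res_{s=0}=\log(\alpha/\beta)\left\{\gamma^{2}+2\gamma_{1}-\frac{\pi^{2}}{16}-\frac{1}{12}\log^{2}(\alpha/\beta)\right\},
\end{equation*}
which rearranges into the second bracketed term of \eqref{dedez0eqn}. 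The principal obstacle is precisely this order-$4$ residue: one must carry the three Laurent/Taylor expansions consistently to order $s^{3}$ so that both the $s^{-4}\cdot[s^{3}]$ and $s^{-2}\cdot[s^{1}]$ contributions to the coefficient of $s^{-1}$ are captured---this is exactly where $\gamma_{1}$ enters the final answer, alongside $\gamma^{2}$, $\pi^{2}$, and $\log^{3}(\alpha/\beta)$.
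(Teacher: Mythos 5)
Your proof is correct. It rests on the same Mellin--Barnes machinery as the paper (the integral representation of $\Omega_{\a}$, a contour shift, and the substitution $s\mapsto -s$ together with $\a\b=\pi^2$ to convert the shifted integral into the $\b$-series), but you execute it along a genuinely different path: the paper first proves the one-parameter identity of Corollary \ref{dedez}, whose integrand has two double poles at $s=\pm z/2$ and simple poles at $s=\pm1-z/2$, and then obtains Theorem \ref{dedez0} by a delicate limit $z\to 0$ involving the expansions \eqref{seczz}--\eqref{dsecabzz} and a $d/dz$ derivative. You instead specialize $z=0$ at the outset, so the two double poles merge into a single pole of order four at $s=0$, whose residue you compute directly. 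Your expansions check out: $\zeta(1-s)\zeta(1+s)=-s^{-2}+(\gamma^2+2\gamma_1)+O(s^2)$ gives $\zeta^2(1-s)\zeta^2(1+s)\cdot\tfrac{1}{2}\sec(\tfrac{\pi s}{2})=\tfrac{1}{2}s^{-4}+(\tfrac{\pi^2}{16}-\gamma^2-2\gamma_1)s^{-2}+O(1)$, an even function, so only the odd part of $e^{-s\log(\a/\b)}$ contributes to the coefficient of $s^{-1}$, yielding $\log(\a/\b)\{\gamma^2+2\gamma_1-\tfrac{\pi^2}{16}-\tfrac{1}{12}\log^2(\a/\b)\}$, which is exactly $\tfrac{1}{48}\log(\a/\b)\{48\gamma^2+96\gamma_1-3\pi^2-4\log^2(\a/\b)\}$; and your residues at $s=\pm1$ correctly give $\tfrac{\pi}{144}(\a^2-\b^2)$. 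The trade-off is that the paper's detour also delivers the general Corollary \ref{dedez} and its corollaries at zeros of $\zeta$, while your argument is shorter and self-contained for this particular identity.
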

\begin{remark}
The generalization of the above result with an extra variable $z$ is given in Corollary \ref{dedez}.
\end{remark}
Recently the first author and Maji \cite[Theorem 1.2]{dixitmaji1} generalized Ramanujan's formula \eqref{zetaodd} to obtain the following relation between any two odd zeta values of the form $\zeta(2m+1)$ and $\zeta(2Nm+1)$, where $N$ is an odd positive integer, $m\in\mathbb{Z}\backslash\{0\}$, and where $\a,\b>0$ with $\a\b^{N}=\pi^{N+1}$:
{\allowdisplaybreaks\begin{align}\label{zetageneqn}
&\a^{-\frac{2Nm}{N+1}}\left(\frac{1}{2}\zeta(2Nm+1)+\sum_{n=1}^{\infty}\frac{n^{-2Nm-1}}{\textup{exp}\left((2n)^{N}\a\right)-1}\right)\nonumber\\
&=\left(-\b^{\frac{2N}{N+1}}\right)^{-m}\frac{2^{2m(N-1)}}{N}\Bigg(\frac{1}{2}\zeta(2m+1)+(-1)^{\frac{N+3}{2}}\sum_{j=\frac{-(N-1)}{2}}^{\frac{N-1}{2}}(-1)^{j}\sum_{n=1}^{\infty}\frac{n^{-2m-1}}{\textup{exp}\left((2n)^{\frac{1}{N}}\b e^{\frac{i\pi j}{N}}\right)-1}\Bigg)\nonumber\\
&\quad+(-1)^{m+\frac{N+3}{2}}2^{2Nm}\sum_{j=0}^{\left\lfloor\frac{N+1}{2N}+m\right\rfloor}\frac{(-1)^jB_{2j}B_{N+1+2N(m-j)}}{(2j)!(N+1+2N(m-j))!}\a^{\frac{2j}{N+1}}\b^{N+\frac{2N^2(m-j)}{N+1}}.
\end{align}}%
On page $332$ of his Lost Notebook \cite{lnb}, Ramanujan embarked on obtaining some result on a more general form of the Lambert series occurring in the above equation, namely, $\sum_{n=1}^{\infty}n^{N-2h}/(\exp{(n^{N}x)}-1)$, however he does not give any result. See \cite{dixitmaji1} for details. 

Our next generalization of Theorem \ref{zetasquared} gives an analogue of \eqref{zetageneqn} by relating $\zeta^2(2m+1)$ with $\zeta^2(2Nm+1)$ .
\begin{theorem}\label{zetasquaredN}
Let $N$ be an odd positive integer. Let $\a,\b>0$ with $\a\b^{N}=\pi^{N+1}$. Then
\begin{align*}
&\left(\a^{\frac{4N}{N+1}}\right)^{-m}\left\{\zeta^{2}(2Nm+1)\left(\g+\log\left(\frac{\a}{\pi}\right)-N\frac{\zeta'(2Nm+1)}{\zeta(2Nm+1)}\right)+\sum_{n=1}^{\infty}\frac{d(n)}{n^{2Nm+1}}\Omega_{\a}\left(n^N\right)\right\}\nonumber\\
&=\frac{1}{N}\left(-\b^{\frac{4N}{N+1}}\right)^{-m}\bigg\{\zeta^{2}(2m+1)\left(\g+\frac{1}{N}\log\left(\frac{\b}{\pi}\right)-\frac{1}{N}\frac{\zeta'(2m+1)}{\zeta(2m+1)}\right)\nonumber\\
&\qquad\qquad\qquad\qquad\quad+(-1)^{\frac{N+3}{2}}\sum_{j=-\frac{(N-1)}{2}}^{\frac{N-1}{2}}(-1)^j\sum_{n=1}^{\infty}\frac{d(n)}{n^{2m+1}}\Omega_{\b}\left(e^{-\frac{ij\pi}{N}}n^{1/N}\right)\bigg\}\nonumber\\
&\quad-\pi 2^{2N-2+4Nm}\sum_{j=0}^{\left\lfloor\frac{N+1}{2N}+m\right\rfloor}\frac{(-1)^{j}2^{4j(1-N)}B_{2j}^{2}B_{N+1+2N(m-j)}^{2}}{(2j)!^2(N+1+2N(m-j))!^2}\a^{\frac{4j}{N+1}}\b^{2N+\frac{4N^2(m-j)}{N+1}}.
\end{align*}
\end{theorem}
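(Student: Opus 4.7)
The plan is to proceed by Mellin-Barnes contour integration, adapting the strategy that proves Theorem \ref{zetasquared} but with the parameter $N$ inserted into the second zeta factor. Consider
$$I(\alpha):=\frac{1}{2\pi i}\int_{(c)}\frac{\zeta^{2}(1-s)\,\zeta^{2}(Ns+2Nm+1)}{2\cos(\pi s/2)}\left(\frac{\pi}{\alpha}\right)^{2s}ds,$$
where $c>1$ is chosen large (and not an odd integer) to guarantee absolute integrability on the line $\textup{Re}(s)=c$. Expanding $\zeta^{2}(Ns+2Nm+1)=\sum_{n\ge 1}d(n)n^{-Ns-2Nm-1}$ as a Dirichlet series and interchanging sum and integral, the Mellin representation of $\Omega_{\alpha}$ (obtained from Koshliakov's representation of $\Omega$ displayed in Section \ref{intro} together with the scaling $\Omega_{\alpha}(x)=\Omega(x(\alpha/\pi)^{2})$) identifies
$$I(\alpha)=\sum_{n=1}^{\infty}\frac{d(n)\,\Omega_{\alpha}(n^{N})}{n^{2Nm+1}},$$
which is the Lambert-type series appearing on the left-hand side of the theorem. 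The absolute convergence needed for the interchange is verified along the lines of Remark \ref{conv}.

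Next I would shift the contour from $\textup{Re}(s)=c$ to $\textup{Re}(s)=-c'$, with $c'>0$ sufficiently large and not an odd integer, and collect residues. The pole structure inside the strip is: a double pole at $s=0$ (from $\zeta^{2}(1-s)$); a double pole at $s=-2m$ (from $\zeta^{2}(Ns+2Nm+1)$, since the argument then equals $1$); and simple poles from $1/\cos(\pi s/2)$ at the odd integers, many of which vanish because of the trivial zeros of $\zeta^{2}(1-s)$. The residue at $s=0$ is computed by combining the Laurent expansion $\zeta^{2}(1-s)=s^{-2}-2\gamma s^{-1}+(\gamma^{2}-2\gamma_{1})+\cdots$ with the Taylor expansion of the remaining factors; the chain-rule factor $N$ emerging from $\zeta^{2}(Ns+2Nm+1)$ produces exactly the first bracket $\zeta^{2}(2Nm+1)\{\gamma+\log(\alpha/\pi)-N\zeta'(2Nm+1)/\zeta(2Nm+1)\}$ on the left-hand side. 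The residue at $s=-2m$ is handled by setting $w=N(s+2m)$ and expanding $\zeta^{2}(1+w)=w^{-2}+2\gamma w^{-1}+\cdots$; it contributes a quantity in $\zeta^{2}(2m+1)$, $\log(\alpha/\pi)$, and $\zeta'(2m+1)/\zeta(2m+1)$, which, after invoking $\alpha\beta^{N}=\pi^{N+1}$ (whence $\log(\alpha/\pi)=-N\log(\beta/\pi)$) and the scaling $(\alpha^{4N/(N+1)})^{-m}$, matches the first bracket on the right-hand side multiplied by $(1/N)(-\beta^{4N/(N+1)})^{-m}$.

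For the remaining shifted integral I would apply the functional equation $\zeta(1-s)=2(2\pi)^{-s}\Gamma(s)\cos(\pi s/2)\zeta(s)$ to rewrite $\zeta^{2}(1-s)/[2\cos(\pi s/2)]=2(2\pi)^{-2s}\Gamma^{2}(s)\cos(\pi s/2)\zeta^{2}(s)$, and similarly transform $\zeta^{2}(Ns+2Nm+1)$ using $\sin(\pi(Ns+2Nm+1)/2)=(-1)^{m}\cos(\pi Ns/2)$ (valid because $N$ is odd and $m$ is an integer). After the substitution $s\to -s/N$, the integrand contains the factor $\cos^{2}(\pi Ns/2)/\cos(\pi s/2)$, which is resolved by the odd-$N$ trigonometric identity
$$\sum_{j=-(N-1)/2}^{(N-1)/2}(-1)^{j}e^{iju\pi}=(-1)^{(N-1)/2}\frac{\cos(\pi Nu/2)}{\cos(\pi u/2)}.$$
Each of the $N$ resulting pieces is identified, via the Mellin representation of $\Omega_{\beta}$ and the Bessel-function scaling $K_{0}(\lambda\sqrt{x})$, with $\sum_{n\ge 1}d(n)\Omega_{\beta}(e^{-ij\pi/N}n^{1/N})/n^{2m+1}$, producing the second series on the right-hand side with its characteristic prefactor $(-1)^{(N+3)/2}$. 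The double poles of $\Gamma^{2}(s)$ at non-positive integers crossed in this step, combined with Euler's formula \eqref{zetaevenint} for $\zeta$ at even arguments, yield the finite Bernoulli double sum on the last line of the theorem; the summation bound $\lfloor(N+1)/(2N)+m\rfloor$ marks the last such pole lying inside the transformed contour.

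The main obstacle will be the meticulous bookkeeping at the double pole $s=-2m$, where four coupled factors ($\zeta^{2}(1-s)$, $\zeta^{2}(Ns+2Nm+1)$, $(\pi/\alpha)^{2s}$, and $1/\cos(\pi s/2)$) must be Taylor-expanded to first order and recombined using $\alpha\beta^{N}=\pi^{N+1}$ to expose the $1/N$-weighted $\beta$-side structure. A close second is the precise normalization of the Bernoulli double sum, where the powers $2^{4j(1-N)}$ and the sign $(-1)^{(N+3)/2}$ must be tracked cleanly through the constants $(2\pi)^{-2s}$, $\Gamma^{2}(s)$, and $\cos^{2}(\pi Ns/2)$ after the $N$-fold trigonometric decomposition.
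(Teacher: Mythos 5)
Your proposal is correct and follows essentially the same route as the paper: the paper evaluates the identical Mellin--Barnes integral $\frac{1}{2\pi i}\int_{(c)}\frac{\zeta^2(1-s)\zeta^2(Ns-N+2h)}{2\cos(\pi s/2)}(\a/\pi)^{-2s}\,ds$ (your integrand is the special case $h=\frac{N+1}{2}+Nm$) by the same contour shift, the same residue collection at the two double poles and at the odd-integer poles of $\sec(\pi s/2)$, and the same functional-equation plus $\sin(Nz)/\sin(z)$ decomposition into the $N$ rotated $\Omega_{\b}$ series. The only organizational difference is that the paper first proves the general Theorem \ref{analoguedixitmaji} for arbitrary $h\neq\frac{N+1}{2}$ and then specializes, and it books the Bernoulli double sum as the residues at $s=1$ and the negative odd integers collected during the initial shift (truncated by the trivial zeros of $\zeta^2(Ns-N+2h)$) rather than as $\Gamma^2$-poles after the transformation, but these are the same poles viewed through the functional equation.
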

As can be readily seen, letting $N=1$ in the above theorem gives Theorem \ref{zetasquared}. A counterpart of the above theorem for even $N$, which is an analogue of Wigert's formula \cite[Theorem 1.4]{dixitmaji1}, is given at the end of Section \ref{withN}.  

This paper is organized as follows. In Section \ref{prelim}, we collect the preliminary results often used in the proofs. Section \ref{withz} commences with a lemma used the proof of the main theorem following it, that is, Theorem \ref{Main Theorem}. The proof of Theorem \ref{Main Theorem} is then followed by several of its corollaries, namely, Theorem \ref{zetasquared}, the limiting case $z\to 2m$ discussed in Corollary \ref{2m-onecor}, the limiting case $z\to 2\ell+1$ discussed in Corollaries \ref{cor2ell+1}, \ref{cor2m-1,4m-1} and \ref{alphabeta-1}. A yet another special case of Theorem \ref{Main Theorem}, namely, Theorem \ref{zetazetathm}, is then proved, followed by the proofs of Corollaries \ref{zeros} and \ref{aperyapp}. Corollary \ref{gk} is then obtained from Corollary \ref{alphabeta-1}. An analogue of the transformation formula for the logarithm of the Dedekind eta function, that is, of \eqref{m0}, which involves an extra variable $z$, is then derived in Corollary \ref{dedez} with its special case Corollary \ref{zeros1} stated next. This is followed by the proofs of Theorems \ref{dedez0}, \ref{mg1sqcor} and \ref{mg2sq} in that order. Section \ref{withN} is devoted to the second generalization of Theorem \ref{zetasquared} consisting of the extra parameter $N$. This is achieved by first deriving a general result, Theorem \ref{analoguedixitmaji}, and then proving Theorem \ref{zetasquaredN} as its special case. The analogue of the latter theorem for $N$ even is then given in Theorem \ref{zetasquaredNeven}. The paper ends with some concluding remarks and directions for further research.

\section{Nuts and bolts}\label{prelim}
Here we collect well-known results that are frequently used in the sequel. Stirling's formula for the Gamma function on a vertical strip states that for $a\leq\sigma\leq b$ and $|t|\geq 1$,
\begin{equation}\label{strivert}
|\Gamma(\sigma+it)|=(2\pi)^{\tfrac{1}{2}}|t|^{\sigma-\tfrac{1}{2}}e^{-\tfrac{1}{2}\pi |t|}\left(1+O\left(\frac{1}{|t|}\right)\right).
\end{equation}
The functional equation of the Riemann zeta function is given by \cite[p.~13, Equation (2.1.1)]{titch}
\begin{equation}\label{zetafe}
\zeta(s)=2^s\pi^{s-1}\G(1-s)\zeta(1-s)\sin\left(\frac{\pi s}{2}\right).
\end{equation}
For Re$(s)>\max\{1,1+$Re$(b)\}$, we have \cite[p.~8, Equation (1.3.1)]{titch}
\begin{equation}\label{sigmadir}
\sum_{n=1}^{\infty}\frac{\sigma_{b}(n)}{n^s}=\zeta(s)\zeta(s-b).
\end{equation}
In 1885, Stieltjes found the Laurent series expansion of $\zeta(s)$ around $s=1$, which is given by \cite[Theorem 3.2.1]{lagariaseuler}
\begin{equation*}
\zeta(s)=\frac{1}{s-1}+\g+\sum_{n=1}^{\infty}\frac{(-1)^n\g_n}{n!}(s-1)^n,
\end{equation*}
where $\g_n$ is defined in \eqref{stieltjesc}. Hence, as $s\to 1$,
\begin{align}\label{zetsti}
\zeta(s)=\frac{1}{s-1}+\g-\g_1 (s-1)+O\left(|s-1|^2\right).
\end{align}
We will use the elementary fact $\sigma_{z}(n)n^{-z/2}=\sigma_{-z}(n)n^{z/2}$ without mention in the sequel.
\section{The first generalization of Theorem \ref{zetasquared} with an additional variable $z$}\label{withz}
Here we first prove Theorem \ref{Main Theorem} and then obtain several interesting corollaries from it, including Theorem \ref{zetasquared}. We begin with a lemma.

\begin{lemma}\label{omalxz}
Let $\Omega_{\rho}(x, z)$ be defined as in \eqref{x,z}. Then for $c=$\textup{Re}$(s)>1\pm$\textup{Re}$\left(\frac{z}{2}\right)$,
\begin{align*}
\Omega_{\rho}(x, z)=\frac{1}{2 \pi i}\int_{(c)}\frac{\zeta\left(1-s+\frac{z}{2} \right)\zeta\left(1-s-\frac{z}{2} \right)}{2\cos\left(\frac{\pi}{2}(s+\frac{z}{2} )\right)}\left(\frac{\rho^2}{\pi^2}x\right)^{-s}\, ds.
\end{align*}
\end{lemma}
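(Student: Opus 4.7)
The plan is to expand each modified Bessel function in the defining series \eqref{x,z} via its Mellin--Barnes representation
\begin{equation*}
K_z(w) = \frac{1}{2\pi i}\int_{(c_0)} 2^{s-2}\Gamma\!\left(\tfrac{s-z}{2}\right)\Gamma\!\left(\tfrac{s+z}{2}\right) w^{-s}\,ds \qquad (c_0>|\textup{Re}\,z|),
\end{equation*}
and then recast the result using the Dirichlet series identity \eqref{sigmadir} for $\sigma_{-z}$ together with the functional equation \eqref{zetafe} of $\zeta$. After applying this representation to both $K_z(4\rho\epsilon\sqrt{jx})$ and $K_z(4\rho\overline{\epsilon}\sqrt{jx})$, the phases $e^{\pm i\pi z/4}$ combine with $\epsilon^{-s}=e^{-i\pi s/4}$ and $\overline{\epsilon}^{-s}=e^{i\pi s/4}$ to produce the single trigonometric factor $2\cos(\pi(z-s)/4)$, turning the bracketed Bessel expression in \eqref{x,z} into a single contour integral in $s$ with kernel $(2\rho\sqrt{jx})^{-s}$.

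Next, I would take $c_0>2+|\textup{Re}\,z|$ and interchange the $j$-summation with the contour integral; this is justified by Stirling's bound \eqref{strivert} on the Gamma factors, the polynomial growth of $\zeta$ on vertical lines, and absolute convergence of the Dirichlet series $\sum_j\sigma_{-z}(j)/j^{(s-z)/2}$. The inner sum simplifies by \eqref{sigmadir} to $\zeta((s-z)/2)\zeta((s+z)/2)$, and a change of variables $s\mapsto 2s$ then rewrites $\Omega_\rho(x,z)$ as a contour integral whose integrand involves the Gamma factors $\Gamma(s\mp z/2)$, the zeta factors $\zeta(s\mp z/2)$, the inherited cosine $\cos(\pi(z-2s)/4)$, and the kernel $(4\rho^2 x)^{-s}$.

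The decisive step is to invoke the functional equation \eqref{zetafe} on each of $\zeta(s\mp z/2)$, which produces the extra factor $2^{2s}\pi^{2s-2}\Gamma(1-s+z/2)\Gamma(1-s-z/2)\sin(\pi(s-z/2)/2)\sin(\pi(s+z/2)/2)\zeta(1-s+z/2)\zeta(1-s-z/2)$. Pairing each new Gamma with its partner $\Gamma(s\mp z/2)$ through the reflection identity $\Gamma(w)\Gamma(1-w)=\pi/\sin(\pi w)$ eliminates all Gammas while introducing $\pi^2/[\sin(\pi(s-z/2))\sin(\pi(s+z/2))]$, and the double-angle identity $\sin 2\theta=2\sin\theta\cos\theta$ then reduces the trigonometric content to $1/[4\cos(\pi(s-z/2)/2)\cos(\pi(s+z/2)/2)]$. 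The key observation $\cos(\pi(z-2s)/4)=\cos(\pi(s-z/2)/2)$ cancels one of these cosines, while the numerical factors collapse as $2^{2s}\pi^{2s-2}(4\rho^2 x)^{-s}=\pi^{-2}(\rho^2 x/\pi^2)^{-s}$, leaving exactly the asserted kernel. The identity, first derived for $c>2+|\textup{Re}\,z|$, extends to the stated range $c>1\pm\textup{Re}(z/2)$ by shifting the line of integration since no new singularities of the simplified integrand are crossed in that strip. The principal technical obstacle is the precise bookkeeping of powers of $2$ and $\pi$ along with the trigonometric cancellations, all of which are driven by the double-angle and reflection identities above.
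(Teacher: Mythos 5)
Your proof is correct and takes essentially the same route as the paper's: the Mellin--Barnes representation of $K_z$, combination of the two phases $e^{\pm i\pi z/4}$ with $\epsilon^{-s},\overline{\epsilon}^{-s}$ into a single cosine, termwise integration against the Dirichlet series \eqref{sigmadir} for $\sigma_{-z}$, and two applications of the functional equation \eqref{zetafe} with the reflection and double-angle identities doing the clean-up. The only (harmless) quibble is that the final contour shift you invoke is unnecessary: after the substitution $s\mapsto 2s$, the condition $c_0>2+|\operatorname{Re}z|$ becomes exactly $c>1\pm\operatorname{Re}(z/2)$, so the identity already holds on the stated line.
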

\begin{proof}
For \textup{Re}$(s)>\pm$\textup{Re}$(z)$ and Re$(a)>0$, we know that \cite[p.~115, Formula 11.1]{ober}
\begin{align}\label{ana1}
\int_{0}^{\infty}t^{s-1}K_{z}(a t)\, dt= 2^{s-2}a^{-s}\Gamma\left(\frac{s-z}{2}\right)\Gamma\left(\frac{s+z}{2}\right).
\end{align}
Now employ the change of variable $t=\sqrt{x}$ and then replace $s$ by $2s$ so that for $c=$\textup{Re}$(s)>\pm$\textup{Re}$\left(\frac{z}{2}\right)$ and Re$(a)>0$,
\begin{align}\label{needed}
\int_{0}^{\infty}x^{s-1}K_{z}(a \sqrt{x})\, dx= 2^{2s-1}a^{-2s}\Gamma\left(s+\frac{z}{2}\right)\Gamma\left(s-\frac{z}{2}\right).
\end{align}
Replace $a$ by $4\rho\epsilon\sqrt{j}$ and then by $4\rho\overline{\epsilon}\sqrt{j}$ in \eqref{needed} and add the resulting two equations so that
\begin{align}\label{ana2}
&\int_{0}^{\infty}x^{s-1}(e^{\pi i z/4}K_{z}(4 \rho \epsilon \sqrt{j x} )+e^{-\pi i z/4}K_{z}(4 \rho \bar{\epsilon} \sqrt{j x}))\, dx\nonumber\\
&=\frac{1}{(2 \rho \sqrt{j})^{2s}}\Gamma\left(s+\frac{z}{2}\right)\Gamma\left(s-\frac{z}{2}\right)\cos\left(\frac{\pi}{2}\left(s-\frac{z}{2}\right)\right).
\end{align}
Employing the above identity in terms of its equivalent inverse Mellin transform representation in the definition of $\Omega_{\rho}(x, z)$, we deduce that
\begin{align*}
\Omega_{\rho}(x, z)&=\frac{2}{2\pi i}\sum_{j=1}^{\infty} \sigma_{-z}(j)j^{\frac{z}{2}}\int_{(c)}\frac{1}{(2 \rho \sqrt{j})^{2s}}\Gamma\left(s+\frac{z}{2}\right)\Gamma\left(s-\frac{z}{2}\right)\cos\left(\frac{\pi}{2}\left(s-\frac{z}{2}\right)\right)x^{-s}\, ds\nonumber\\
&=\frac{2}{2\pi i}\int_{(c)}\frac{1}{(2 \rho)^{2s}}\Gamma\left(s+\frac{z}{2}\right)\Gamma\left(s-\frac{z}{2}\right)\cos\left(\frac{\pi}{2}\left(s-\frac{z}{2}\right)\right)\left(\sum_{j=1}^{\infty}\frac{\sigma_{-z}(j)}{j^{s-\frac{z}{2}}}\right)x^{-s}\, ds,
\end{align*}
where the interchange of the order of summation and integration can be justified by means of absolute convergence which in turn follows from \eqref{strivert}. Using \eqref{sigmadir} with $s$ replaced by $s-\frac{z}{2}$ and $b$ by $-z$ in the above equation, we see that for $c=$\textup{Re}$(s)>1\pm$\textup{Re}$\left(\frac{z}{2}\right)$,
\begin{align}\label{imtg}
\Omega_{\rho}(x, z)&=\frac{2}{2\pi i}\int_{(c)}\zeta\left(s-\frac{z}{2} \right)\zeta\left(s+\frac{z}{2} \right)\Gamma\left(s+\frac{z}{2}\right)\Gamma\left(s-\frac{z}{2}\right)\cos\left(\frac{\pi}{2}\left(s-\frac{z}{2}\right)\right)\left(4\rho^2x\right)^{-s}ds\nonumber\\
&=\frac{1}{2 \pi i}\int_{(c)}\frac{\zeta\left(1-s+\frac{z}{2} \right)\zeta\left(1-s-\frac{z}{2} \right)}{2\cos\left(\frac{\pi}{2}\left(s+\frac{z}{2}\right)\right)}\left(\frac{\rho^2x}{\pi^2}\right)^{-s}ds,
\end{align}
where in the last step we used the functional equation \eqref{zetafe} twice, once, with $s$ replaced by $1-s+\frac{z}{2}$, and then with $1-s-\frac{z}{2}$.
\end{proof}
\begin{proof}[Theorem \textup{\ref{Main Theorem}}][]
We first assume Re$(z)\geq 0$ and let $m\in\mathbb{N}$. We begin with the series on the left-hand side of \eqref{maintheoremeqn}. Using the variant of the reflection formula for the Gamma function, namely, $\Gamma\left(\frac{1}{2}+w\right)\Gamma\left(\frac{1}{2}-w\right)=\frac{\pi}{\cos(\pi w)}, w\notin\mathbb{Z}-\frac{1}{2}$, followed by \eqref{strivert}, it can be shown that as $\textup{Im}(s)\to\infty$,
\begin{align}\label{cosbound}
\frac{1}{\left|\cos\left(\frac{\pi}{2}\left(s+\frac{z}{2}\right)\right)\right|}=2\exp{\left(-\tfrac{\pi}{2}\left|\textup{Im}(s)+\tfrac{1}{2}\textup{Im}(z)\right|\right)}\left(1+O\left(\frac{1}{|\textup{Im}(s)|}\right)\right).
\end{align}
This guarantees the interchange of the order of summation and integration in the second step below and therefore using Lemma \ref{omalxz}, we have 
\begin{align*}
\sum_{n=1}^{\infty} \frac{\sigma_{-z}(n) n^{z/2}\Omega_{\alpha}(n,z)}{n^{2m+1}}&=\sum_{n=1}^{\infty} \frac{\sigma_{-z}(n)}{n^{2m+1-\frac{z}{2}}}\frac{1}{2 \pi i}\int_{(c)}\frac{\zeta\left(1-s+\frac{z}{2} \right)\zeta\left(1-s-\frac{z}{2} \right)}{2\cos\left(\frac{\pi}{2}(s+\frac{z}{2} )\right)}\left(\frac{n\a^2}{\pi^2}\right)^{-s}\, ds\nonumber\\
&=\frac{1}{2 \pi i}\int_{(c)}\left(\sum_{n=1}^{\infty}\frac{\sigma_{-z}(n)}{n^{2m+1+s-\frac{z}{2}}}\right)\frac{\zeta\left(1-s+\frac{z}{2} \right)\zeta\left(1-s-\frac{z}{2} \right)}{2\cos\left(\frac{\pi}{2}(s+\frac{z}{2} )\right)}\left(\frac{\a^2}{\pi^2}\right)^{-s}\, ds.
\end{align*}
In order to represent the series $\sum_{n=1}^{\infty}\sigma_{-z}(n)n^{-2m-1-s+\frac{z}{2}}$ as the zeta product, we need $c >\max\{1+ \text{Re}(z/2), -2m \pm \text{Re}(z/2) \}=1+$Re$(z/2)$ since Re$(z)\geq 0$ and $m>0$. Thus for $c>1+$Re$(z/2)$,
\begin{align}\label{Parent sum 1}
\sum_{n=1}^{\infty} \frac{\sigma_{-z}(n) n^{z/2}\Omega_{\alpha}(n,z)}{n^{2m+1}}&=\frac{1}{2\pi i}\int_{(c)}F(s, z, m)\, ds,
\end{align}
where
\begin{align}\label{fszm}
F(s, z, m):=\frac{G(s, z, m)}{2\cos\Big(\frac{\pi}{2}(s+\frac{z}{2} )\Big)}\Bigg(\frac{\alpha^2}{\pi^2}\Bigg)^{-s},
\end{align}
with
\begin{align}\label{gszm}
G(s, z, m):=\zeta\Big(2m+1+s-z/2 \Big)\zeta\Big(2m+1+s+z/2 \Big)\zeta\Big(1-s+\frac{z}{2} \Big)\zeta\Big(1-s-\frac{z}{2} \Big).
\end{align}
Consider the contour $C$ determined by the line segments $[c - iT, c+ iT],[c + iT, -\lambda+ iT],[-\lambda + iT, -\lambda - iT],[-\lambda - iT, -c - iT],$ where $c=c'+\text{Re}(z/2)$ with $1<c'< 3$, and $\lambda = 2m + \text{Re}(z/2) + c'$ so that $-2m-3-\text{Re}(z/2)<-\lambda < -2m -1 -\text{Re}(z/2).$ In general, let $R_a$ denote the residue of the associated integrand, in this case $F(s, z, m)$, at $s=a$.

It is easy to see that the integrand $F(s, z, m)$ has simple poles at $-2m\pm z/2, \pm z/2$ due to each of the four zeta functions as well as simple poles at $-2k+1-z/2$ for $0\leq k\leq m+1$. The latter ones are due to the zeros of $\cos\left(\tfrac{\pi}{2}\left(s+\tfrac{z}{2}\right)\right)$ at $-2k+1-z/2$. Note that the zeros corresponding to $k>m+1$ get canceled by the corresponding zeros of $\zeta\left(2m+1+s+z/2 \right)$ at $s=-2j-2m-1-z/2, j>0$. The residues at the above poles can be easily calculated thus giving
\begin{align}\label{residues}
R_{-2m + z/2} &= \frac{1}{2}(-1)^m\left(\alpha/\pi\right)^{4m-z} \sec\left(\frac{\pi z}{2}\right)\zeta(1+2m)\zeta(1+z)\zeta(1+2m-z),\nonumber\\
R_{-2m - z/2} &= \frac{1}{2}(-1)^m\left(\frac{\alpha}{\pi}\right)^{4m+z} \zeta(1+2m)\zeta(1-z)\zeta(1+2m+z),\nonumber\\
R_{z/2} &= -\frac{1}{2}\left(\frac{\alpha}{\pi}\right)^{-z}\sec\left(\frac{\pi z}{2}\right)\zeta(1+2m)\zeta(1-z)\zeta(1+2m+z),\nonumber\\
R_{-z/2} &= -\frac{1}{2}\left(\frac{\alpha}{\pi}\right)^{z}\zeta(1+2m)\zeta(1+z)\zeta(1+2m-z),\nonumber\\
R_{1-2k-z/2}&= \frac{1}{4 \alpha}(-1)^{k+m}(2\pi)^{2+2m}\left(\frac{\pi}{\alpha}\right)^{1-4k-z}\frac{B_{2k}B_{2-2k+2m}\zeta(2k+z)\zeta(2-2k+2m-z)}{(2k)!(2-2k+2m)!}.
\end{align}
By Cauchy's residue theorem, 
\begin{align*}
&\frac{1}{2\pi i}\left[\int_{c-i T}^{c+I T}+\int_{c+i T}^{-\lambda +I T}+\int_{-\lambda + i T}^{-\lambda -I T}+\int_{-\lambda -i T}^{c-I T}\right]F(s, z, m)\, ds \nonumber\\
&= R_{z/2}+R_{-z/2}+R_{-2m-z/2}+R_{-2m+z/2}+\sum_{k=0}^{m+1}R_{1-2k-z/2}.
\end{align*}
Using \eqref{strivert} again and the elementary bounds on the Riemann zeta function, it can be easily shown that the integrals over the horizontal segments tend to zero at $T\to\infty$. Thus from \eqref{Parent sum 1},
\begin{align}\label{idtypri}
\sum_{n=1}^{\infty} \frac{\sigma_{-z}(n) n^{z/2}\Omega_{\alpha}(n,z)}{n^{2m+1}}&=\frac{1}{2\pi i}\int_{(-\l)}F(s, z, m)\, ds+R_{z/2}+R_{-z/2}+R_{-2m-z/2}\nonumber\\
&\quad+R_{-2m+z/2}+\sum_{k=0}^{m+1}R_{1-2k-z/2}.
\end{align}
To evaluate the integral in the above equation, employ the change of variable $s=-w-2m$ and note from \eqref{gszm} that $G(-2m-w, z, m)=G(w,z,m)$. Along with recalling the fact that $\l=2m+$Re$(z/2)+c', 1<c'<3$, this gives
\begin{align}\label{transformed_sum}
\frac{1}{2\pi i}\int_{(-\l)}F(s, z, m)\, ds&=(-1)^m\Bigg(\frac{\alpha}{\pi}\Bigg)^{4m}\frac{1}{2\pi i}\int_{(c'+\textup{Re}(z/2))}\frac{G(w, z, m)}{2\cos\Big(\frac{\pi}{2}(s-\frac{z}{2} )\Big)}\Bigg(\frac{\alpha^2}{\pi^2}\Bigg)^{w}\, dw\nonumber\\
&=(-1)^m\Bigg(\frac{\alpha}{\pi}\Bigg)^{4m}\frac{1}{2\pi i}\int_{(c'+\textup{Re}(z/2))}\frac{G(w, -z, m)}{2\cos\Big(\frac{\pi}{2}(s-\frac{z}{2} )\Big)}\Bigg(\frac{\left(\pi^4/\alpha^2\right)}{\pi^2}\Bigg)^{-w}\, dw\nonumber\\
&=(-1)^m\Bigg(\frac{\alpha}{\pi}\Bigg)^{4m}\sum_{n=1}^{\infty} \frac{\sigma_{z}(n) n^{-z/2}\Omega_{\frac{\pi^2}{\a}}(n,-z)}{n^{2m+1}},
\end{align}
as can be seen from \eqref{Parent sum 1} and \eqref{fszm}. Note that in the second step in the above calculation, we have used the fact that $G(w,z,m)=G(w,-z,m)$.

Therefore from \eqref{residues}, \eqref{idtypri} and \eqref{transformed_sum}, we obtain upon simplification
\begin{align*}
&\sum_{n=1}^{\infty}\frac{\sigma_{-z}(n)n^{z/2}\Omega_{\alpha}(n,z)}{n^{2m+1}} \\
&\quad+\frac{1}{2}\zeta(2m+1)\Bigg\{\Big( \frac{\alpha}{\pi} \Big)^{z}\zeta(1+2m-z)\zeta(1+z)+\Big( \frac{\alpha}{\pi} \Big)^{-z}\frac{\zeta(1+2m+z)\zeta(1-z)}{\cos\Big(\frac{\pi z}{2}\Big)}\Bigg\}\\
&=(-1)^m\Bigg(\frac{\alpha}{\pi}\Bigg)^{4m}\sum_{n=1}^{\infty}\frac{\sigma_{z}(n)n^{-z/2}\Omega_{\frac{\pi^2}{\a}}(n,-z)}{n^{2m+1}}\\
&\quad+\frac{(-1)^m\alpha^{4m}}{2\pi^{4m}}\zeta(2m+1)\Bigg\{\Big( \frac{\alpha}{\pi} \Big)^{z}\zeta(1+2m+z)\zeta(1-z)+\Big( \frac{\alpha}{\pi} \Big)^{-z}\frac{\zeta(1+2m-z)\zeta(1+z)}{\cos\Big(\frac{\pi z}{2}\Big)}\Bigg\}\\
&\quad + \sum_{k=0}^{m+1}\frac{(-1)^{m+k}}{4 \alpha}(2 \pi)^{2+2m}\Big( \frac{\pi}{\alpha}\Big)^{1-4k-z}\frac{B_{2k}B_{2-2k+2m}}{(2k)!(2-2k+2m)!}\zeta(2-2k+2m-z)\zeta(2k+z).
\end{align*}
Multiply both sides of the above equation by $(\a^2)^{-m}$, employ the fact $\a\b=\pi^2$, and observe that
\begin{equation*}
(\alpha^2)^{-m}\frac{1}{\alpha}\pi^{2+2m}\Big( \frac{\pi}{\alpha}\Big)^{1-4k-z}=\pi \Big( \frac{\alpha}{\beta}\Big)^{-1-m+2k+z/2}.
\end{equation*}
This leads to \eqref{maintheoremeqn} and thus completes the proof of Theorem \ref{Main Theorem} for Re$(z)\geq 0$ and $m>0$. For Re$(z)\leq 0$ and $m>0$, simply swap $\alpha$ and $\b$ and simultaneously replace $z$ by $-z$. This leaves \eqref{maintheoremeqn} invariant and we are back to the case just proved above. For $m<0$, the result can be proved exact along the similar lines as above.
\end{proof}
\begin{corollary}
Theorem \ref{zetasquared} is valid.
\end{corollary}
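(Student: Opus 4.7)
The plan is to derive Theorem \ref{zetasquared} as the limiting case $z\to 0$ of Theorem \ref{Main Theorem}. Although $z=0$ is explicitly excluded from the domain in Theorem \ref{Main Theorem} (since $0\in\mathfrak{B}$), the individual singularities on each side of \eqref{maintheoremeqn} will cancel in the limit, and the resulting identity is \eqref{zetasquaredeqn}.

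First, I would handle the bracketed expression on the left-hand side of \eqref{maintheoremeqn}. Using the Laurent expansions near $z=0$,
\begin{align*}
\zeta(1\pm z)&=\pm\tfrac{1}{z}+\gamma+O(z),\\
\zeta(2m+1\pm z)&=\zeta(2m+1)\pm z\,\zeta'(2m+1)+O(z^2),\\
\Bigl(\tfrac{\alpha}{\pi}\Bigr)^{\pm z}&=1\pm z\log\!\Bigl(\tfrac{\alpha}{\pi}\Bigr)+O(z^2),\\
\sec\!\Bigl(\tfrac{\pi z}{2}\Bigr)&=1+O(z^2),
\end{align*}
one finds after multiplying through by $\tfrac12\zeta(2m+1)$ that the two $1/z$ singularities in
$$\Bigl(\tfrac{\alpha}{\pi}\Bigr)^{z}\zeta(2m+1-z)\zeta(1+z)+\Bigl(\tfrac{\alpha}{\pi}\Bigr)^{-z}\sec\!\Bigl(\tfrac{\pi z}{2}\Bigr)\zeta(2m+1+z)\zeta(1-z)$$
cancel, and the finite part evaluates to
$$2\,\zeta(2m+1)\!\left(\gamma+\log\!\Bigl(\tfrac{\alpha}{\pi}\Bigr)-\frac{\zeta'(2m+1)}{\zeta(2m+1)}\right).$$
Meanwhile the series portion converges termwise as $z\to 0$ to $\sum_{n=1}^\infty d(n)\Omega_\alpha(n)/n^{2m+1}$ since $\sigma_0(n)=d(n)$ and $\Omega_\rho(x,0)=\Omega_\rho(x)$; this is legitimate by the absolute convergence estimate of Remark \ref{conv}, which is uniform in a small neighborhood of $z=0$. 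The right-hand bracket is treated identically with $\alpha$ replaced by $\beta$ and $z$ by $-z$.

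Next, I would evaluate the finite sum
$$(-1)^m\pi\,2^{2m}\sum_{j=0}^{m+1}\frac{(-1)^{j}B_{2j}B_{2m+2-2j}\,\zeta(2m+2-2j-z)\zeta(2j+z)}{(2j)!(2m+2-2j)!}\Bigl(\tfrac{\alpha}{\beta}\Bigr)^{-1-m+2j+z/2}$$
at $z=0$. All zeta factors are regular at $z=0$ (using $\zeta(0)=-\tfrac12$ when $j=0$ or $j=m+1$), so one may set $z=0$ directly. Applying Euler's formula \eqref{zetaevenint} inside each product $\zeta(2j)\zeta(2m+2-2j)$ (and checking the boundary cases $j=0$, $j=m+1$ via $\zeta(0)=-1/2$, $B_0=1$) converts the single Bernoulli factors into the squared combination $B_{2j}^2B_{2m+2-2j}^2$. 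Simplifying the sign and the power of $\pi$, and using $\alpha\beta=\pi^2$ to write
$$\pi^{2m+2}\Bigl(\tfrac{\alpha}{\beta}\Bigr)^{-1-m+2j}=(\alpha\beta)^{m+1}\Bigl(\tfrac{\alpha}{\beta}\Bigr)^{-1-m+2j}=(\alpha^2)^{j}(\beta^2)^{m+1-j},$$
produces exactly the term
$$-\pi\,2^{4m}\sum_{j=0}^{m+1}\frac{(-1)^{j}B_{2j}^{2}B_{2m+2-2j}^{2}}{((2j)!)^2((2m+2-2j)!)^2}(\alpha^2)^{j}(\beta^2)^{m+1-j}$$
appearing on the right of \eqref{zetasquaredeqn}. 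Combining this with the two bracketed limits computed above yields Theorem \ref{zetasquared}.

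The main delicate point is the pole cancellation on the left and right brackets: it relies on tracking the constant terms in the Laurent expansions carefully so that $\gamma$, $\log(\alpha/\pi)$, and the logarithmic derivative $\zeta'(2m+1)/\zeta(2m+1)$ emerge with the correct signs and coefficients. Once that bookkeeping is done, the boundary cases $j=0$ and $j=m+1$ in the finite sum—which require using $\zeta(0)=-1/2$ rather than Euler's formula directly—are the only other place where one must verify that the unified formula with $B_{2j}^2B_{2m+2-2j}^2$ indeed holds uniformly across all $j$.
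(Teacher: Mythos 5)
Your proposal is correct and follows essentially the same route as the paper: take the limit $z\to 0$ in Theorem \ref{Main Theorem}, show via Laurent expansions that the $1/z$ poles in the two bracketed zeta combinations cancel leaving $2\zeta(2m+1)\bigl(\gamma+\log(\alpha/\pi)-\zeta'(2m+1)/\zeta(2m+1)\bigr)$, and convert the finite sum with Euler's formula and $\alpha\beta=\pi^2$. The only (immaterial) difference is that the paper rewrites $\zeta(1-z)/\cos(\pi z/2)$ via the functional equation before expanding, whereas you expand $\sec(\pi z/2)$ and $\zeta(1-z)$ directly; both give the same finite part.
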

\begin{proof}
Let $z\to 0$ in Theorem \ref{Main Theorem}. Using \eqref{zetaevenint} twice and the fact that $\a\b=\pi^2$, it is easy to see that as $z\to 0$, the right-hand side of \eqref{maintheoremeqn} tends to that of \eqref{zetasquaredeqn}. Thus it suffices to show that
\begin{align}\label{limit}
&\lim_{z\to 0}\left(\Big(\frac{\alpha}{\pi} \Big)^{z}\zeta(2m+1-z)\zeta(1+z)+\Big( \frac{\alpha}{\pi} \Big)^{-z}\frac{\zeta(2m+1+z)\zeta(1-z)}{\cos\Big(\frac{\pi z}{2}\Big)}\right)\nonumber\\
&=2\zeta(2m+1)\left(\gamma + \log\left(\frac{\alpha}{\pi}\right)- \frac{\zeta'(2m+1)}{\zeta(2m+1)}\right).
\end{align}
To that end, we use \eqref{zetsti} with $s$ replaced by $1+z$ and two other well-known expansions as $z\to 0$:
\begin{align*}
\zeta(2m+1\pm z)&=\zeta(2m+1)\pm z\zeta'(2m+1)+O_{m}(|z|),\\
\left(\frac{\a}{\pi}\right)^{z}&=\exp{\left(z\log\left(\frac{\a}{\pi}\right)\right)}=1+z\log\left(\frac{\a}{\pi}\right)+O_{\a}\left(|z|^2\right).
\end{align*}
Thus, as $z\to 0$,
\begin{align}\label{1}
&\Big(\frac{\alpha}{\pi} \Big)^{z}\zeta(2m+1-z)\zeta(1+z)\nonumber\\
&=\left(1+z\log\left(\frac{\a}{\pi}\right)+O_{\a}\left(|z|^2\right)\right)\left(\zeta(2m+1)\pm z\zeta'(2m+1)+O_{m}(|z|)\right)\left(\frac{1}{z}+\g+O(|z|)\right)\nonumber\\ 
&=\frac{1}{z}\zeta(2m+1)+\zeta(2m+1)\left(\left(\gamma + \log\left(\frac{\alpha}{\pi}\right)\right)- \frac{\zeta'(2m+1)}{\zeta(2m+1)}\right)+O\left(|z|\right).
\end{align}
Now applying the functional equation \eqref{zetafe} with $s$ replaced by $1-z$ in the first step below and then using some of the above expansions as well as 
\begin{align*}
\G(z)&=\frac{1}{z}-\g+O(|z|),\\
\zeta(z)&=-\frac{1}{2}-\frac{z}{2}\log(2\pi)+O\left(|z|^2\right),
\end{align*}
as $z\to 0$, in the second step, we see that
\begin{align}\label{2}
&\Big( \frac{\alpha}{\pi} \Big)^{-z}\frac{\zeta(2m+1+z)\zeta(1-z)}{\cos\Big(\frac{\pi z}{2}\Big)}\nonumber\\
&=2(2\a)^{-z}\zeta(z)\G(z)\zeta(2m+1+z)\nonumber\\
&=2\left(1-z\log(2\a)+O_{\a}\left(|z|^2\right)\right)\left(\frac{1}{z}-\g+O(|z|)\right)\left(-\frac{1}{2}-\frac{z}{2}\log(2\pi)+O\left(|z|^2\right)\right)\nonumber\\
&\quad\times\left(\zeta(2m+1)+z\zeta'(2m+1)+O_{m}(|z|)\right)\nonumber\\
&=-\frac{1}{z}\zeta(2m+1)+\zeta(2m+1)\left(\left(\gamma + \log\left(\frac{\alpha}{\pi}\right)\right)- \frac{\zeta'(2m+1)}{\zeta(2m+1)}\right)+O\left(|z|\right).
\end{align}
Adding \eqref{1} and \eqref{2} and then letting $z\to 0$ leads to \eqref{limit}. This proves \eqref{zetasquaredeqn}.
\end{proof}

\begin{corollary}\label{2m-onecor}
Let $\a,\b>0$ such that $\a\b=\pi^2$. Then for $m\in\mathbb{N}$,
\begin{align}\label{2m-one}
&(\a^2)^{-m}\sum_{n=1}^{\infty}\frac{\sigma_{-2m}(n)\Omega_{\alpha}(n,2m)}{n^{m+1}}-(-\b^2)^{-m}\sum_{n=1}^{\infty}\frac{\sigma_{-2m}(n)\Omega_{\beta}(n,-2m)}{n^{m+1}}\nonumber\\
&=\frac{\zeta^{2}(2m+1)}{2\pi^{2m}}\log\left(\frac{\a}{\b}\right)+\frac{1}{2}(-1)^{m+1}\pi^{2m}\left(\a^{-4m}-\b^{-4m}\right)\zeta(2m+1)\zeta(4m+1)\zeta(1-2m)\nonumber\\
&\quad+\begin{cases}
\frac{1}{6}\pi^{2m+3}2^{4m-1}\frac{B_{2m}B_{2m+2}}{(2m)!(2m+2)!}\left(\frac{\a}{\b}-\frac{\b}{\a}\right), \text{if}\hspace{1mm}m>0,\\
0,\hspace{55mm}\text{if}\hspace{1mm}m\leq-1.\\
\end{cases}
\end{align}
\end{corollary}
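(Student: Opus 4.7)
The plan is to take the limit $z\to 2m$ in Theorem~\ref{Main Theorem}. Setting $u=z-2m$, the factor $\zeta(2m+1-z)=\zeta(1-u)$ has a simple pole at $u=0$, so the singular summand $\tfrac{1}{2}\zeta(2m+1)(\a/\pi)^z\zeta(2m+1-z)\zeta(1+z)$ on the left of \eqref{maintheoremeqn} and the singular summand $\tfrac{1}{2}\zeta(2m+1)(\b/\pi)^z\zeta(2m+1-z)\zeta(1+z)/\cos(\pi z/2)$ on the right both blow up. The first task is to verify that after the respective prefactors $(\a^2)^{-m}$ and $(-\b^2)^{-m}$ are applied, the two divergences are identical, so they cancel when the two sides of \eqref{maintheoremeqn} are equated.

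Concretely, I would expand $\zeta(1-u)=-\tfrac{1}{u}+\g+O(u)$, $\zeta(2m+1\pm u)=\zeta(2m+1)\pm u\zeta'(2m+1)+O(u^2)$, $(\a/\pi)^{\pm z}=(\a/\pi)^{\pm 2m}(1\pm u\log(\a/\pi))+O(u^2)$, the analogous expansion with $\b$ in place of $\a$, and $\cos(\pi z/2)=(-1)^m+O(u^2)$. Plugging these in and using $\a\b=\pi^2$, both singular summands contribute $-\zeta^{2}(2m+1)/(2\pi^{2m}u)$, so the poles cancel in \eqref{maintheoremeqn}. Collecting the $O(1)$ parts, the $\zeta'(2m+1)$ terms appear with equal coefficients on the two sides and cancel, while the $\g$ terms combine with $\log(\a/\pi)$ and $\log(\b/\pi)$ to produce $\frac{\zeta^{2}(2m+1)}{2\pi^{2m}}\log(\a/\b)$. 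The regular summands $(\a/\pi)^{-z}\zeta(2m+1+z)\zeta(1-z)/\cos(\pi z/2)$ and $(\b/\pi)^{-z}\zeta(2m+1+z)\zeta(1-z)$, evaluated at $z=2m$ and combined with the prefactors, assemble into $\tfrac{1}{2}(-1)^{m+1}\pi^{2m}(\a^{-4m}-\b^{-4m})\zeta(2m+1)\zeta(4m+1)\zeta(1-2m)$. Continuity at $z=2m$ of the double series $\sum\sigma_{-z}(n)n^{z/2}\Omega_{\a}(n,z)/n^{2m+1}$ and its $\b$-counterpart, justified by the bounds in Remark~\ref{conv}, turns them into the two Bessel series appearing in \eqref{2m-one}.

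The last ingredient is the finite sum on the right of \eqref{maintheoremeqn} at $z=2m$. For $m\geq 1$, the factor $\zeta(2m+2-2j-z)=\zeta(2-2j)$ vanishes for $j\geq 2$ (trivial zeros), so only $j=0$ and $j=1$ survive, with $\zeta(2)=\pi^{2}/6$, $\zeta(0)=-1/2$, $B_{0}=1$, $B_{2}=1/6$, and the monomials $(\a/\b)^{-1+2j}$. Invoking Euler's formula \eqref{zetaevenint} for $\zeta(2m)$ and $\zeta(2m+2)$ then collapses the two surviving terms into $\tfrac{1}{6}\pi^{2m+3}2^{4m-1}\frac{B_{2m}B_{2m+2}}{(2m)!(2m+2)!}(\a/\b-\b/\a)$, as claimed. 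For $m\leq -1$ the upper limit $m+1$ either makes the sum empty ($m\leq -2$) or forces $j=0$ alone, in which case $\zeta(2j+z)=\zeta(-2)=0$ at $z=-2$, so the finite sum vanishes. The principal obstacle is the careful bookkeeping of the first-order Taylor coefficients in $u$, both to make the pole cancellation transparent and to identify the $O(1)$ finite parts correctly; once that is done, the remaining simplifications are routine.
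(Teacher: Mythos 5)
Your proposal is correct and follows essentially the same route as the paper: let $z\to 2m$ in Theorem \ref{Main Theorem}, observe that the two summands containing $\zeta(2m+1-z)$ have matching simple poles (after the prefactors $(\alpha^2)^{-m}$, $(-\beta^2)^{-m}$ and $1/\cos(\pi z/2)$ are accounted for) whose Laurent expansions cancel to leave the $\log(\alpha/\beta)$ term, evaluate the regular summands at $z=2m$, and note that the trivial zeros $\zeta(2-2j)=0$ for $j\geq 2$ reduce the finite sum to the $j=0,1$ terms (respectively to nothing, or to a term killed by $\zeta(-2)=0$, when $m\leq -1$). The paper packages the pole cancellation slightly differently — it groups the two singular summands into a single $0\cdot\infty$ expression before taking the limit — but the computation and all the resulting constants agree with yours.
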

\begin{proof}
Write \eqref{maintheoremeqn} in the form
\begin{align}\label{maintheoremeqn1}
&(\alpha^2)^{-m}\sum_{n=1}^{\infty}\frac{\sigma_{-z}(n)n^{z/2}\Omega_{\alpha}(n,z)}{n^{2m+1}}-(-\beta^2)^{-m}\sum_{n=1}^{\infty}\frac{\sigma_{z}(n)n^{-z/2}\Omega_{\beta}(n,-z)}{n^{2m+1}}\nonumber\\
&=\frac{1}{2}\zeta(2m+1)\bigg\{\pi^z\zeta(2m+1+z)\zeta(1-z)\left((-1)^m\beta^{-z-2m}-\a^{-z-2m}/\cos\left(\tfrac{\pi z}{2}\right)\right)\nonumber\\
&\qquad\qquad\qquad\quad+\pi^{-z}\zeta(2m+1-z)\zeta(1+z)\left((-1)^m\b^{z-2m}/\cos\left(\tfrac{\pi z}{2}\right)-\a^{z-2m}\right)\bigg\}\nonumber\\
& \quad  + (-1)^m\pi 2^{2m}\sum_{j=0}^{m+1}\frac{(-1)^{j}B_{2j}B_{2m+2-2j}\hspace{1mm}\zeta(2m+2-2j-z)\zeta(2j+z)}{(2j)!(2m+2-2j)!} \Big( \frac{\alpha}{\beta}\Big)^{-1-m+2j+z/2}.
\end{align}
Now let $z\to 2m$. It is easy to see that the left-hand side of the above equation then becomes that of \eqref{2m-one}. On the right-hand side, we have
\begin{align}\label{limit1}
&\lim_{z\to 2m}\pi^z\zeta(2m+1+z)\zeta(1-z)\left((-1)^m\beta^{-z-2m}-\a^{-z-2m}/\cos\left(\tfrac{\pi z}{2}\right)\right)\nonumber\\
&=\frac{1}{2}(-1)^{m+1}\pi^{2m}\left(\a^{-4m}-\b^{-4m}\right)\zeta(2m+1)\zeta(4m+1)\zeta(1-2m).
\end{align}
Also, using \eqref{zetsti} with $s$ replaced by $2m+1-z$, we see that as $z\to 2m$,
\begin{align*}
\zeta(2m+1-z)=\frac{1}{2m-z}+\g+O\left(|z-2m|\right).
\end{align*}
Hence
\begin{align}\label{limit2}
&\lim_{z\to 2m}\pi^{-z}\zeta(2m+1-z)\zeta(1+z)\left((-1)^m\b^{z-2m}/\cos\left(\tfrac{\pi z}{2}\right)-\a^{z-2m}\right)\nonumber\\
&=\pi^{-2m}\zeta(2m+1)\lim_{z\to 2m}\left(\frac{1}{2m-z}+\g+O\left(|z-2m|\right)\right)\left((-1)^m\b^{z-2m}/\cos\left(\tfrac{\pi z}{2}\right)-\a^{z-2m}\right)\nonumber\\
&=\pi^{-2m}\zeta(2m+1)\lim_{z\to 2m}\frac{1}{2m-z}\left((-1)^m\b^{z-2m}/\cos\left(\tfrac{\pi z}{2}\right)-\a^{z-2m}\right)\nonumber\\
&=\frac{1}{2\pi^{2m}}\zeta^{2}(2m+1)\log\left(\frac{\a}{\b}\right).
\end{align}
Note that in the finite sum on the right-hand side of \eqref{maintheoremeqn1}, only the terms corresponding to $j=0,1$ survive when $m>0$. On simplifying these two terms using \eqref{zetaevenint}, and then combining with \eqref{limit1} and \eqref{limit2}, one arrives at 
$\frac{1}{6}\pi^{2m+3}2^{4m-1}\frac{B_{2m}B_{2m+2}}{(2m)!(2m+2)!}\left(\frac{\a}{\b}-\frac{\b}{\a}\right)$.
Also, for $m\leq-1$, the sum is easily seen to be zero.
\end{proof}
\begin{remark}
We note that letting $\a=\b=\pi$ in Corollary \ref{2m-onecor} gives $0=0$.
\end{remark}
\begin{corollary}\label{cor2ell+1}
Let $m\in\mathbb{N}, \a,\b>0$ such that $\a\b=\pi^2$. Then,
\begin{align}\label{2ell+1}
&(\a^2)^{-m}\sum_{n=1}^{\infty}\frac{\sigma_{-2\ell-1}(n)\Omega_{\alpha}(n,2\ell+1)}{n^{2m-\ell+1/2}}-(-\b^2)^{-m}\sum_{n=1}^{\infty}\frac{\sigma_{-2\ell-1}(n)\Omega_{\beta}(n,-2\ell-1)}{n^{2m-\ell+1/2}}\nonumber\\
&=(-1)^m\pi 2^{2m}\sum_{j=0}^{m+1}\frac{(-1)^{j}B_{2j}B_{2m+2-2j}}{(2j)!(2m+2-2j)!}\zeta(2m-2\ell-2j+1)\zeta(2j+2\ell+1)\left(\frac{\a}{\b}\right)^{2j+\ell-m-\frac{1}{2}}\nonumber\\
&\quad+\pi(-1)^{\ell}2^{2m}\zeta(2m+1)\left(\frac{\b}{\pi}\right)^{2\ell-2m+1}\frac{(2\ell-2m)!B_{2\ell+2}}{(2\ell+2)!}\zeta(2\ell-2m+1)\nonumber\\
&\quad-\pi(-1)^{\ell}2^{2m}\zeta(2m+1)\begin{cases}
(-1)^{m}\left(\frac{\a}{\pi}\right)^{-2\ell-2m-1}\frac{(2\ell)!B_{2m+2\ell+2}}{(2m+2\ell+2)!}\zeta(2\ell+1),\text{if}\hspace{1mm}\ell\geq\max\{1,m+1\},\\
0, \text{if}\hspace{1mm}1\leq\ell<-m-1.
\end{cases}
\end{align}
\end{corollary}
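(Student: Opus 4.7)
The strategy is to derive Corollary \ref{cor2ell+1} as the limit $z\to 2\ell+1$ of Theorem \ref{Main Theorem}, taken in the rearranged form \eqref{maintheoremeqn1}. The left-hand side of \eqref{maintheoremeqn1} is analytic in $z$ at $z=2\ell+1$, since $\Omega_{\alpha}(n,z)$ and $\sigma_{-z}(n)n^{z/2}$ are entire in $z$; substituting $z=2\ell+1$ therefore produces the left-hand side of \eqref{2ell+1} directly. All of the work lies in evaluating the right-hand side of \eqref{maintheoremeqn1}, where the factor $1/\cos(\pi z/2)$ and several zeta-factors are individually singular at $z=2\ell+1$.

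Writing $z=2\ell+1+\varepsilon$ and letting $\varepsilon\to 0$, the key local expansions are
\begin{align*}
\cos\!\bigl(\tfrac{\pi z}{2}\bigr) &= -\tfrac{(-1)^{\ell}\pi}{2}\,\varepsilon+O(\varepsilon^{3}),\\
\zeta(1-z) &= \zeta(-2\ell)-\varepsilon\,\zeta'(-2\ell)+O(\varepsilon^{2}),\qquad \zeta(2m+1-z)=\zeta(2m-2\ell)-\varepsilon\,\zeta'(2m-2\ell)+O(\varepsilon^{2}),
\end{align*}
together with the classical identity $\zeta'(-2n)=\frac{(-1)^{n}(2n)!}{2(2\pi)^{2n}}\zeta(2n+1)$ for $n\geq 1$, obtained by differentiating \eqref{zetafe} at $s=-2n$, and (when needed) the Laurent expansion \eqref{zetsti}. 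I would then split the right-hand side of \eqref{maintheoremeqn1} into three blocks. The block carrying $\zeta(1-z)\bigl((-1)^{m}\beta^{-z-2m}-\alpha^{-z-2m}/\cos(\pi z/2)\bigr)$ exploits $\ell\geq 1$: the factor $\zeta(-2\ell)$ vanishes, annihilating the $\beta$-piece and turning the $\alpha/\cos$ piece into a $0/0$ with a finite limit, which combined with Euler's formula \eqref{zetaevenint} for $\zeta(2m+2\ell+2)$ yields precisely the third line of \eqref{2ell+1}. Symmetrically, the block carrying $\zeta(2m+1-z)\bigl((-1)^{m}\beta^{z-2m}/\cos(\pi z/2)-\alpha^{z-2m}\bigr)$ uses $\ell\geq m+1$: the trivial zero $\zeta(2m-2\ell)=0$ removes the $\alpha$-piece, and the $\beta$-piece, after simplifying $\zeta(2\ell+2)$ via \eqref{zetaevenint}, gives the second line of \eqref{2ell+1}. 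The third block is the finite sum over $j$; the hypothesis $\ell\geq\max\{1,m+1\}$ forces $m-\ell<0$, so the otherwise problematic index $j=m-\ell$ falls outside $[0,m+1]$, every summand is analytic at $z=2\ell+1$, and the limit is just the first sum on the right of \eqref{2ell+1}.

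The principal obstacle is sign and exponent bookkeeping: the factors $(-1)^{\ell}$ arising from $\cos(\pi z/2)^{-1}$, $(-1)^{n}$ from $\zeta'(-2n)$, and $(-1)^{k+1}$ from Euler's formula must be tracked carefully, and the various powers of $\pi,\alpha,\beta$ must be reduced using $\alpha\beta=\pi^{2}$ to the forms $(\alpha/\pi)^{*}$, $(\beta/\pi)^{*}$ and $(\alpha/\beta)^{*}$ appearing in \eqref{2ell+1}. A useful sanity check is that the resulting identity should be invariant under the symmetry $(\alpha,z)\leftrightarrow(\beta,-z)$ of Theorem \ref{Main Theorem}, which interchanges the two $0/0$ blocks and hence the second and third lines of \eqref{2ell+1}; the restriction $\ell\geq\max\{1,m+1\}$ is precisely what avoids any additional pole cancellation between the finite sum and these blocks.
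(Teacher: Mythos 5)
Your proposal is correct and follows essentially the same route as the paper: both take $z\to 2\ell+1$ in Theorem \ref{Main Theorem}, use the trivial zeros $\zeta(-2\ell)=0$ and $\zeta(2m-2\ell)=0$ to kill the non-singular pieces, evaluate the two $0/0$ ratios $\zeta(1-z)/\cos(\pi z/2)$ and $\zeta(2m+1-z)/\cos(\pi z/2)$ via L'Hopital together with $\zeta'(-2k)=\frac{(-1)^k(2k)!}{2(2\pi)^{2k}}\zeta(2k+1)$, and finish with Euler's formula \eqref{zetaevenint}. The only difference is that you omit the (vacuous under the stated hypothesis $m\in\mathbb{N}$) second case $1\leq\ell<-m-1$, which the paper dispatches by noting all three relevant zeta factors vanish there.
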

\begin{proof}
First let $\ell\geq\max\{1,m+1\}$. Let $z\to 2\ell+1$ in Theorem \ref{Main Theorem}. Since $m\leq\ell-1$, we have $\zeta(2m-2\ell)=0$. Hence by L'Hopital's rule,
\begin{align}\label{zetalimi}
\lim_{z\to2\ell+1}\frac{\zeta(2m+1-z)}{\cos\left(\frac{\pi z}{2}\right)}&=\frac{2}{\pi}(-1)^{\ell}\zeta'(2m-2\ell)\nonumber\\
&=\frac{(-1)^m(2\ell-2m)!}{\pi(2\pi)^{2\ell-2m}}\zeta(2\ell-2m+1),
\end{align}
where in the last step, we used the well-known formula \cite[p.~167]{srichoi}
\begin{equation*}
\zeta'(-2k)=\frac{(-1)^k(2k)!}{2(2\pi)^{2k}}\zeta(2k+1).
\end{equation*}
We need to use \eqref{zetalimi} once with $m=0$ and once more with a general $m$. This gives \eqref{2ell+1} upon simplification.

Now let $1\leq\ell<-m-1$. Again let $z\to2\ell+1$ in Theorem \ref{Main Theorem}. Then $\zeta(2m+1-z)=0=\zeta(1-z)$ and also $\zeta(2m+1+z)=0$. Hence 
\begin{equation*}
\lim_{z\to2\ell+1}\frac{\zeta(2m+1+z)\zeta(1-z)}{\cos\left(\frac{\pi z}{2}\right)}=0.
\end{equation*}
This gives \eqref{2ell+1} upon simplification.
\end{proof}
\begin{corollary}\label{cor2m-1,4m-1}
For $\a,\b>0, \a\b=\pi^2$, $m\in\mathbb{N}$ and $m>1$,
\begin{align}\label{2m-1,4m-1}
&\a^{2m}\sum_{n=1}^{\infty}\sigma_{2m-1}(n)n^{m-1/2}\Omega_{\alpha}(n,2m-1)-(-\b^2)^{m}\sum_{n=1}^{\infty}\sigma_{2m-1}(n)n^{m-1/2}\Omega_{\beta}(n,1-2m)\nonumber\\
&=-\frac{B_{2m}}{2m}\left\{\frac{\a(2m-2)!\zeta(2m-1)}{2^{2m}}+\frac{(-1)^{m+1}\pi 2^{2m-1}B_{2m}(4m-2)!}{(2m)!}\left(\frac{\b}{2\pi}\right)^{4m-1}\zeta(4m-1)\right\}.
\end{align}
\end{corollary}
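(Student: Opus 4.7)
The plan is to obtain Corollary \ref{cor2m-1,4m-1} by specializing Theorem \ref{Main Theorem} through the substitutions $m \mapsto -m$ (so the corollary's $m>1$ is a fixed positive integer) and $z \to 2m-1$. Under $m\mapsto-m$, the theorem's prefactors become $\alpha^{2m}$ on the left and $(-\beta^2)^m$ on the right, while the series denominator changes from $n^{2m+1}$ to $n^{1-2m}$. Using the identity $\sigma_{-z}(n)n^{z/2}=\sigma_z(n)n^{-z/2}$ together with $\sigma_{1-2m}(n)=n^{1-2m}\sigma_{2m-1}(n)$, I would first verify that at $z=2m-1$ the two series in \eqref{maintheoremeqn} reduce exactly to the two series on the left-hand side of \eqref{2m-1,4m-1}.

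Next, I analyze the explicit non-series terms at $z=2m-1$. The ``no-cosine'' product $(\alpha/\pi)^{z}\zeta(1-2m-z)\zeta(1+z)$ becomes $(\alpha/\pi)^{2m-1}\zeta(2-4m)\zeta(2m)$, which vanishes by the trivial zero $\zeta(2-4m)=0$ (for $m\geq 1$); the analogous product on the right vanishes because $\zeta(2-2m)=0$ (using $m>1$). Under the substitution $m\mapsto-m$, the finite $j$-sum in \eqref{maintheoremeqn} runs from $j=0$ up to $j=-m+1<0$ and is therefore empty. Hence only the two cosine-containing terms contribute, each presenting a $0/0$ indeterminate form as $z \to 2m-1$.

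To evaluate these limits I set $z=2m-1+w$ and expand
\[
\zeta(2-2m-w)=-w\,\zeta'(2-2m)+O(w^2),\qquad \zeta(2-4m-w)=-w\,\zeta'(2-4m)+O(w^2),
\]
together with $\cos(\pi z/2)=(-1)^{m}\tfrac{\pi w}{2}+O(w^3)$. The well-known formula $\zeta'(-2k)=\tfrac{(-1)^{k}(2k)!}{2(2\pi)^{2k}}\zeta(2k+1)$, applied with $k=m-1$ on the left and $k=2m-1$ on the right, turns these two limits into closed-form expressions in $\zeta(2m-1)$ and $\zeta(4m-1)$ respectively.

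Finally, I would assemble the surviving contributions using $\tfrac12\zeta(1-2m)=-B_{2m}/(4m)$, $\zeta(2m)=(-1)^{m+1}(2\pi)^{2m}B_{2m}/(2(2m)!)$, and $\alpha\beta=\pi^2$, to reduce the resulting expression to the explicit form on the right-hand side of \eqref{2m-1,4m-1}. The principal subtlety is justifying the $m<0$ instance of Theorem \ref{Main Theorem}: for negative $m$ the poles of the integrand $F(s,z,m)$ shift, and one must re-examine the contour-shift argument in Section \ref{withz} to confirm that the formula still holds with the $j$-sum interpreted as empty. This is what the authors allude to in their remark ``For $m<0$, the result can be proved exact along the similar lines'' at the end of the proof of Theorem \ref{Main Theorem}.
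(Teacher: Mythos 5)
Your proposal is correct and follows essentially the same route as the paper: the authors also specialize Theorem \ref{Main Theorem} at the negative index (they take $m<-1$ and $z=-2m-1$, then replace $m$ by $-m$ at the end, which is the same as your $m\mapsto-m$ first), observe the same trivial-zero cancellations and the emptiness of the finite sum, evaluate the two $0/0$ limits via $\zeta'(-2k)=\tfrac{(-1)^k(2k)!}{2(2\pi)^{2k}}\zeta(2k+1)$, and finish with $\zeta(1-2m)=-B_{2m}/(2m)$ and Euler's formula. The only cosmetic difference is that the paper routes the limit computation through the previously established formula \eqref{zetalimi} rather than a direct Taylor expansion, and, like you, it leaves the $m<0$ case of Theorem \ref{Main Theorem} to the assertion that it "can be proved along the similar lines."
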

\begin{proof}
Let $m\in\mathbb{Z}, m<-1$, and then $z=-2m-1$ in Theorem \ref{Main Theorem}. Then 
\begin{align}\label{one-a}
\zeta(2m+1-z)=0=\zeta(1-z), \zeta(1+z)=\zeta(-2m)\hspace{1mm}\text{and}\hspace{1mm}\zeta(2m+1+z)=\zeta(0)=-1/2.
\end{align}
Also,
\begin{align}\label{two-a}
\lim_{z\to-2m-1}\frac{\zeta(1-z)}{\cos\left(\frac{\pi z}{2}\right)}=2^{2m+2}\pi^{2m+1}(-2m-2)!\zeta(-2m-1),
\end{align}
whereas replacing $\ell$ by $-m-1$ in \eqref{zetalimi} gives
\begin{align}\label{three-a}
\lim_{z\to-2m-1}\frac{\zeta(2m+1-z)}{\cos\left(\frac{\pi z}{2}\right)}=\frac{(-1)^m(-4m-2)!}{\pi(2\pi)^{-4m-2}}\zeta(-4m-1).
\end{align}
Substituting \eqref{one-a}, \eqref{two-a}, \eqref{three-a} in Theorem \ref{Main Theorem} and then replacing $m$ by $-m$, we find that for $m>1$, 
\begin{align*}
&\a^{2m}\sum_{n=1}^{\infty}\sigma_{2m-1}(n)n^{m-1/2}\Omega_{\alpha}(n,2m-1)-(-\b^2)^{m}\sum_{n=1}^{\infty}\sigma_{2m-1}(n)n^{m-1/2}\Omega_{\beta}(n,1-2m)\nonumber\\
&=\zeta(1-2m)\left(\a 2^{-2m}(2m-2)!\zeta(2m-1)+\pi^{1-2m}\left(\tfrac{\beta}{2\pi}\right)^{4m-1}(4m-2)!\zeta(4m-1)\zeta(2m)\right).
\end{align*}
Now \eqref{2m-1,4m-1} follows by using $\zeta(1-2m)=-B_{2m}/(2m)$ \cite[p.~266, Theorem 12.16]{apostol-1998a} and Euler's formula \eqref{zetaevenint}.
\end{proof}
For example, the case $\a=\b=\pi$ and $m=2$ of the above corollary gives 
\begin{align*}
\sum_{n=1}^{\infty}\sigma_3(n)n^{3/2}\L^{-}(n, 3)=\frac{1}{960\pi^3}\left(\zeta(3)+\frac{1}{2}\zeta(7)\right),
\end{align*}
where $\L^{-}(x, z)$ is defined in \eqref{l1}.

A yet another special case of Theorem \ref{Main Theorem} is given.
\begin{corollary}\label{alphabeta-1}
For $\a,\b>0, \a\b=\pi^2$,
\begin{align}\label{alphabeta-2}
&\a^2\sum_{n=1}^{\infty}\sigma(n)\sqrt{n}\Omega_{\a}(n,-1)+\b^2\sum_{n=1}^{\infty}\sigma(n)\sqrt{n}\Omega_{\b}(n,1)\nonumber\\
&=\frac{\pi\a}{96}-\frac{\a^3}{288\pi^2}\zeta(3)-\frac{\b}{48}\log\left(\frac{\a}{\b}\right)+\frac{\b}{48}\g+\frac{\b}{4}\zeta'(-1).
\end{align}
\end{corollary}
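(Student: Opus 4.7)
[Proof proposal]
My plan is to derive \eqref{alphabeta-2} from Theorem \ref{Main Theorem} by specializing $m=-1$ and carefully taking the limit $z\to -1$. Since $-1\in\mathfrak{B}$, I would set $z=-1+\epsilon$ and let $\epsilon\to 0$. With this choice, the summand on the left-hand side of \eqref{maintheoremeqn} becomes $\sigma_{-z}(n)n^{z/2}/n^{2m+1}=\sigma_{1}(n)n^{-1/2}\cdot n=\sigma(n)\sqrt{n}$, multiplied by the prefactor $(\a^2)^{-m}=\a^2$; similarly, the right-hand summand is $\sigma_{-1}(n)n^{1/2}\cdot n=\sigma(n)\sqrt{n}$, with prefactor $(-\b^2)^{-m}=-\b^2$. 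Moving that term across produces exactly $\a^2\sum\sigma(n)\sqrt{n}\Omega_{\a}(n,-1)+\b^2\sum\sigma(n)\sqrt{n}\Omega_{\b}(n,1)$, the left-hand side of \eqref{alphabeta-2}.

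I would then expand every remaining piece as a Laurent/Taylor series in $\epsilon$ using standard ingredients from Section \ref{prelim} together with $\zeta(\pm\epsilon)=-\tfrac{1}{2}\mp\tfrac{\epsilon}{2}\log(2\pi)+O(\epsilon^2)$, $\zeta(-2+\epsilon)=\epsilon\,\zeta'(-2)+O(\epsilon^2)$ with $\zeta'(-2)=-\zeta(3)/(4\pi^2)$, $\zeta(-1+\epsilon)=-\tfrac{1}{12}+\epsilon\,\zeta'(-1)+O(\epsilon^2)$, and $\cos(\pi z/2)=\sin(\pi\epsilon/2)=\pi\epsilon/2+O(\epsilon^3)$. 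The LHS bracketed quantity in \eqref{maintheoremeqn} is regular at $\epsilon=0$: the simple zero of $\zeta(-2+\epsilon)$ cancels the pole of $1/\sin(\pi\epsilon/2)$, and its limit is $\pi/(4\a)-\a\zeta(3)/(12\pi^2)$, which contributes $-\pi\a/96+\a^3\zeta(3)/(288\pi^2)$ after multiplication by $\tfrac{1}{2}\zeta(-1)\cdot\a^2=-\a^2/24$.

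The RHS is more delicate. The second piece of the RHS bracket, $(\b/\pi)^{-z}\zeta(-\epsilon)\zeta(\epsilon)/\sin(\pi\epsilon/2)$, now has a genuine simple pole because $\zeta(-\epsilon)\zeta(\epsilon)=\tfrac{1}{4}+O(\epsilon^2)$ does not vanish; after multiplication by $-\b^2\cdot\tfrac{1}{2}\zeta(-1)$ this contributes $\b/(48\epsilon)+\b\log(\b/\pi)/48+O(\epsilon)$. The finite $j$-sum on the RHS of \eqref{maintheoremeqn} collapses, since $m+1=0$, to the single $j=0$ term $-(\pi/4)\zeta(-z)\zeta(z)(\a/\b)^{z/2}$, which via $\zeta(1-\epsilon)=-1/\epsilon+\g+O(\epsilon)$ produces an opposing $1/\epsilon$ singularity. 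The key step is to verify that these two singular contributions cancel: using $\a\b=\pi^2$, so that $\pi\sqrt{\b/\a}=\b$, the finite sum yields $-\b/(48\epsilon)-\b\log(\a/\b)/96+\b\zeta'(-1)/4+\b\g/48+O(\epsilon)$, and the $1/\epsilon$ pieces cancel exactly.

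What remains on the RHS, after the cancellation, is $\b\log(\b/\pi)/48-\b\log(\a/\b)/96+\b\zeta'(-1)/4+\b\g/48$. Using $\log(\b/\pi)=-\tfrac{1}{2}\log(\a/\b)$ (again from $\a\b=\pi^2$), this simplifies to $-\b\log(\a/\b)/48+\b\g/48+\b\zeta'(-1)/4$. Equating the two finite limits and rearranging reproduces \eqref{alphabeta-2}. The main technical obstacle is the precise bookkeeping of Laurent coefficients through $O(\epsilon)$ so that the residues of the two $1/\epsilon$ poles cancel to the expected $\b/48$ with opposite signs; once that cancellation is in place, the remaining algebra is routine.
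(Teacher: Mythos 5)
Your proposal is correct and takes essentially the same route as the paper: both set $m=-1$ and let $z\to-1$ in Theorem \ref{Main Theorem}, observing that the simple pole of the term $(\b/\pi)^{z}\zeta(-1-z)\zeta(1+z)/\cos\left(\tfrac{\pi z}{2}\right)$ cancels against that of the single $j=0$ term $-\tfrac{\pi}{4}\zeta(-z)\zeta(z)(\a/\b)^{z/2}$ of the finite sum (the paper packages these two pieces into the single limit $L$ of \eqref{elll} and evaluates it by L'Hopital, while you expand term by term in $\epsilon=z+1$; the finite parts agree with \eqref{leval}). The only blemish is the typographical slip $(\b/\pi)^{-z}$ where the theorem has $(\b/\pi)^{z}$ in the second piece of the right-hand bracket — your computed contribution $\b/(48\epsilon)+\b\log(\b/\pi)/48$ corresponds to the correct exponent, so the argument is unaffected.
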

\begin{proof}
Let $m=-1$ and then let $z\to-1$ in Theorem \ref{Main Theorem}. Note that this requires evaluating
\begin{align}\label{elll}
L:=\lim_{z\to-1}\left\{\frac{\b^{z+2}}{24\pi^z}\frac{\zeta(-1-z)\zeta(1+z)}{\cos\left(\frac{\pi z}{2}\right)}-\frac{\pi}{4}\zeta(z)\zeta(-z)\left(\frac{\a}{\b}\right)^{z/2}\right\}.
\end{align}
As $z\to-1$,
\begin{align*}
\frac{1}{\cos\left(\frac{\pi z}{2}\right)}&=\frac{(2/\pi)}{z+1}+O\left(|z+1|\right),\nonumber\\
\zeta(-z)&=\frac{-1}{z+1}+\g+O\left(|z+1|\right).
\end{align*}
Hence substituting the above Laurent series expansions in \eqref{elll}, we have
\begin{align}\label{ellll}
L&=L_1+\frac{\g\pi}{48}\left(\frac{\a}{\b}\right)^{-1/2},
\end{align}
where
\begin{align*}
L_1&=\lim_{z\to-1}\frac{1}{(z+1)}\left\{\frac{\b^{z+2}}{12\pi^{z+1}}\zeta(-1-z)\zeta(1+z)+\frac{\pi}{4}\zeta(z)\left(\frac{\a}{\b}\right)^{\frac{z}{2}}\right\}\nonumber\\
&=\lim_{z\to-1}\left\{\frac{\b^{z+2}}{12\pi^{z+1}}\log\left(\frac{\b}{\pi}\right)\zeta(-1-z)\zeta(1+z)+\frac{\pi}{4}\zeta'(z)\left(\frac{\a}{\b}\right)^{\frac{z}{2}}+\frac{\pi}{8}\zeta(z)\left(\frac{\a}{\b}\right)^{\frac{z}{2}}\log\left(\frac{\a}{\b}\right)\right\}\nonumber\\
&=\frac{\b}{48}\log\left(\frac{\b}{\pi}\right)+\frac{\pi}{4}\zeta'(-1)\frac{\sqrt{\b}}{\sqrt{\a}}-\frac{\pi}{96}\frac{\sqrt{\b}}{\sqrt{\a}}\log\left(\frac{\a}{\b}\right),
\end{align*}
where in the second step we used L'Hopital's rule.

Now substitute the above value of $L_1$ in \eqref{ellll}, and repeatedly use the fact $\a\b=\pi^2$ to obtain 
\begin{equation}\label{leval}
L=-\frac{\b}{48}\log\left(\frac{\a}{\b}\right)+\frac{\b}{48}\g+\frac{\b}{4}\zeta'(-1).
\end{equation}
Substituting \eqref{leval} into the identity resulting by letting $z\to-1$ in Theorem \ref{Main Theorem} and simplifying leads to \eqref{alphabeta-2}.
\end{proof}
\begin{remark}\label{avoided}
A further limiting case of Theorem \ref{Main Theorem} can be obtained by letting $z\to 2\ell+1$ with $-m\leq\ell<m, m>0, \ell\neq-1$. We refrain from giving this identity since it is quite complicated.
\end{remark}

We now prove other special cases of Theorem \ref{Main Theorem}, some of which are stated in the introduction.
\begin{proof}[Theorem \textup{\ref{zetazetathm}}][]
Let $m=-1$ and $\a=\b=\pi$ in Theorem \ref{Main Theorem}, use $\zeta(-1)=-1/12$, and simplify. This gives \eqref{zetazeta} upon recalling \eqref{l1} and observing that
\begin{align*}
\sum_{n=1}^{\infty}\sigma_{-z}(n)n^{1+\frac{z}{2}}\Omega(n,z)+\sum_{n=1}^{\infty}\sigma_{z}(n)n^{1-\frac{z}{2}}\Omega(n,-z)
=\sum_{n=1}^{\infty}\sigma_{-z}(n)n^{1+\frac{z}{2}}\L^{+}(n, z).
\end{align*}
\end{proof}

\begin{proof}[Corollary \textup{\ref{zeros}}][]
To prove part (i), let $z=\omega$ in Theorem \ref{zetazetathm}. Note then that not only is $\zeta(\omega)=0$ but also $\zeta(1-\omega)=0$, as is the well-known fact, or, which can be easily seen through the functional equation \eqref{zetafe}. 

For part (ii), let $z=-4k, k\in\mathbb{N}$ in Theorem \ref{zetazetathm}, note that $\zeta(-4k)=0$ and use the fact $\zeta(-\l)=-B_{\l+1}/(\l+1)$ for $\l\in\mathbb{N}$ once with $\l=4k-1$ and then with $\l=4k+1$.
\end{proof}

\begin{proof}[Corollary \textup{\ref{aperyapp}}][]
Let $k=1$ in Part (ii) of Corollary \ref{zeros} to get \eqref{aperyappeqn}. Now if both $\zeta(5)$ and $\sum_{n=1}^{\infty}\frac{\sigma_{4}(n)}{n}\L^{+}(n,-4)$ were rational, that would imply that $\zeta(3)$ is rational too. But this contradicts Ap\'{e}ry's result that $\zeta(3)$ is irrational. Hence the proposed claim is true.
\end{proof}

\begin{proof}[Corollary \textup{\ref{gk}}][]
Let $\a=\b=\pi$ in Corollary \ref{alphabeta-1} and use the relation \cite[Equation (3.18)]{choi2007} 
\begin{equation}\label{choi}
\zeta'(-1)=\frac{1}{12}-\log(A),
\end{equation}
where $A$ is the Glaisher-Kinkelin constant.
\end{proof}
We next give an analogue of the transformation formula for the logarithm of the Dedekind eta function, that is, of \eqref{m0}, which consists of an extra variable $z$.
\begin{corollary}\label{dedez}
For $z\neq 0, \pm 1$ and $\a,\b>0$ with $\a\b=\pi^2$, we have
\begin{align}\label{dedezeqn}
&\sum_{n=1}^{\infty}\frac{\sigma_{-z}(n)n^{z/2}\Omega_{\a}(n,z)}{n}-\sum_{n=1}^{\infty}\frac{\sigma_{z}(n)n^{-z/2}\Omega_{\b}(n,-z)}{n}\nonumber\\
&=-\frac{\pi}{12}\left(\frac{\a}{\b}\right)^{\frac{z}{2}}\left\{\frac{\a}{\b}\zeta(-z)\zeta(2+z)-\frac{\b}{\a}\zeta(z)\zeta(2-z)\right\}+\frac{1}{2}\left(\frac{\a}{\b}\right)^{-\frac{z}{2}}\nonumber\\
&\quad\times\bigg[\frac{d}{dz}\left\{\left(\left(\frac{\a}{\b}\right)^{z}-\sec\left(\frac{\pi z}{2}\right)\right)\zeta(1-z)\zeta(1+z)\right\}+\log\left(\frac{\a}{\b}\right)\sec\left(\frac{\pi z}{2}\right)\zeta(1-z)\zeta(1+z)\bigg].
\end{align}
\end{corollary}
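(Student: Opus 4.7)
The strategy is to run the contour-integration argument from the proof of Theorem \ref{Main Theorem} in the excluded case $m=0$, where extra poles appear and two of the old ones acquire higher multiplicity. By Lemma \ref{omalxz}, for $c>1+\textup{Re}(z/2)$,
\begin{equation*}
\sum_{n=1}^{\infty}\frac{\sigma_{-z}(n)n^{z/2}\Omega_{\a}(n,z)}{n}=\frac{1}{2\pi i}\int_{(c)}F_{\a}(s,z)\,ds,
\end{equation*}
where $F_{\rho}(s,z)$ denotes the $m=0$ specialization of $F(s,z,m)$ from \eqref{fszm}, with the parameter $\rho\in\{\a,\b\}$ appearing in $(\rho^{2}/\pi^{2})^{-s}$. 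Since $G(s,z,0)$ is even in $s$ and in $z$ separately, and $\a\b=\pi^{2}$ gives $(\a^{2}/\pi^{2})^{s}=(\b^{2}/\pi^{2})^{-s}$, one checks directly that $F_{\a}(-s,z)=F_{\b}(s,-z)$. The substitution $s\mapsto -s$ therefore identifies $\int_{(-c)}F_{\a}(s,z)\,ds$ with $2\pi i\sum_{n}\sigma_{z}(n)n^{-z/2}\Omega_{\b}(n,-z)/n$. Shifting the contour from $(c)$ to $(-c)$ and applying Cauchy's theorem---the horizontal integrals vanishing by \eqref{strivert} and \eqref{cosbound} as in the proof of Theorem \ref{Main Theorem}---shows that the left-hand side of \eqref{dedezeqn} equals the sum of residues of $F_{\a}(s,z)$ inside the strip $-c<\textup{Re}(s)<c$.

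For generic $z\neq 0,\pm 1$ there are four poles to handle. The simple poles at $s=1-z/2$ and $s=-1-z/2$ arise from zeros of $\cos(\pi(s+z/2)/2)$; there three zeta factors remain regular, the fourth equals $\zeta(0)=-1/2$, and one also uses $\zeta(2)=\pi^{2}/6$. A routine computation gives
\begin{equation*}
\Res_{s=1-z/2}F_{\a}=\frac{\pi}{12}\zeta(z)\zeta(2-z)\frac{\b}{\a}\left(\frac{\a}{\b}\right)^{z/2},\quad \Res_{s=-1-z/2}F_{\a}=-\frac{\pi}{12}\zeta(-z)\zeta(2+z)\frac{\a}{\b}\left(\frac{\a}{\b}\right)^{z/2},
\end{equation*}
whose sum matches the first term on the right-hand side of \eqref{dedezeqn}. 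The remaining poles, at $s=\pm z/2$, are double because two of the four zeta factors simultaneously reach $\zeta(1)$: by \eqref{zetsti}, $\zeta(1+u)\zeta(1-u)=-u^{-2}+(\g^{2}+2\g_{1})+O(u)$, so the residue at $s=z/2$ (with $u=s-z/2$) equals $-Q'(0)$, where
\begin{equation*}
Q(u):=\frac{\zeta(1+z+u)\zeta(1-z-u)}{2\cos(\pi(z+u)/2)}\left(\frac{\a^{2}}{\pi^{2}}\right)^{-z/2-u},
\end{equation*}
and the residue at $s=-z/2$ is obtained analogously. Evaluating $Q'(0)$ by logarithmic differentiation introduces $\zeta'(1\pm z)$, the derivative of $\sec(\pi z/2)$, and a $\log(\a/\b)$-term from differentiating $(\a^{2}/\pi^{2})^{-s}$.

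The main obstacle will be the ensuing algebraic reorganisation of the two double-pole residues: neither is individually a total $z$-derivative, but with the aid of $\tfrac{d}{dz}[\zeta(1-z)\zeta(1+z)]=\zeta(1-z)\zeta'(1+z)-\zeta'(1-z)\zeta(1+z)$ and $\tfrac{d}{dz}\sec(\pi z/2)=\tfrac{\pi}{2}\sec(\pi z/2)\tan(\pi z/2)$, their sum is to be recognised as $\tfrac{d}{dz}\{((\a/\b)^{z}-\sec(\pi z/2))\zeta(1-z)\zeta(1+z)\}+\log(\a/\b)\sec(\pi z/2)\zeta(1-z)\zeta(1+z)$, the additive $\log(\a/\b)$-term precisely compensating for the extra piece that $\tfrac{d}{dz}$ would otherwise generate from the $(\a/\b)^{z}$ factor. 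The pole enumeration, the simple-pole evaluation, and the contour-shift estimates are all direct adaptations of steps already carried out in the proof of Theorem \ref{Main Theorem}.
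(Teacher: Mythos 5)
Your proposal is correct and follows essentially the same route as the paper: represent the first series via Lemma \ref{omalxz} with the $m=0$ integrand, shift the contour, identify the shifted integral with the second series through the symmetry $s\mapsto -s$ (using $\a\b=\pi^2$), and collect the two simple poles at $\pm 1-z/2$ and the two double poles at $\pm z/2$. Your residue computations (which I checked) are consistent with, and in fact more explicit than, what the paper records.
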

\begin{proof}
The proof is quite similar to that of Theorem \ref{Main Theorem} and hence we will be very brief. Assume Re$(z)\geq0$. For $1+$Re$(z/2)<c=$Re$(s)<3+$Re$(z/2)$,
\begin{align*}
&\sum_{n=1}^{\infty}\frac{\sigma_{-z}(n)n^{z/2}\Omega_{\a}(n,z)}{n}\nonumber\\
&=\frac{1}{2\pi i}\int_{(c)}\zeta\left(1-s+\frac{z}{2}\right)\zeta\left(1-s-\frac{z}{2}\right)\zeta\left(1+s-\frac{z}{2}\right)\zeta\left(1+s+\frac{z}{2}\right)\frac{(\a^2/\pi^2)^{-s}}{2\cos\left(\frac{\pi}{2}\left(s+\frac{z}{2}\right)\right)}\, ds.
\end{align*}
Now shift the line of integration to Re$(s)=-\l$, where $1+$Re$(z/2)<\l=$Re$(s)<3+$Re$(z/2)$ by constructing a rectangular contour, and consider the contribution of the residues at the simple poles of the integrand at $\pm1-z/2$ (due to $\cos\left(\frac{\pi}{2}\left(s+\frac{z}{2}\right)\right)$) and at the double poles at $\pm z/2$ (due to $\zeta(1\pm s\pm z/2)$), which, upon simplification, turns out to be the right-hand side of \eqref{dedezeqn}. The application of the residue theorem then leads to \eqref{dedezeqn}. 

For Re$(z)\leq0$, simply swap $\a$ and $\b$ and simultaneously replace $z$ by $-z$ in \eqref{dedezeqn}. This leaves \eqref{dedezeqn} invariant, which proves the result by reducing it to the previous case Re$(z)\geq 0$.
\end{proof}
We now give a corollary of the above result when $z$ is specialized to be a non-trivial zero of $\zeta(z)$ or a trivial zero of the form $-4k-2, k\in\mathbb{N}$. We omit the proof as it is easy.
\begin{corollary}\label{zeros1}
\textup{(i)} Let $\omega$ denote a non-trivial zero of $\zeta(z)$. Then
\begin{align*}
\sum_{n=1}^{\infty}\sigma_{-\omega}(n)n^{\frac{\omega}{2}-1}\L^{-}(n, \omega)=-\frac{\pi}{12}\zeta(-\omega)\zeta(2+\omega)-\frac{1}{2}\left(1-\sec\left(\tfrac{\pi \omega}{2}\right)\right)\zeta(1+\omega)\zeta'(1-\omega).
\end{align*}
\textup{(ii)} For $k\in\mathbb{N}$,
\begin{align*}
\sum_{n=1}^{\infty}\sigma_{4k+2}(n)n^{-2k-2}\L^{-}(n, -4k-2)=\zeta(4k+3)\zeta'(-4k-1)-\zeta'(4k+3)\zeta(-4k-1).
\end{align*}
\end{corollary}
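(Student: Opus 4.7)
The plan is to specialize Corollary \ref{dedez} to $\a=\b=\pi$ and then substitute the two special values of $z$ in turn. With $\a=\b=\pi$ we have $\log(\a/\b)=0$ and $(\a/\b)^{\pm z/2}=1$, so the two series on the left-hand side of \eqref{dedezeqn} combine via the identity $\sigma_{z}(n)n^{-z/2}=\sigma_{-z}(n)n^{z/2}$ together with \eqref{l1} into
\begin{equation*}
\sum_{n=1}^{\infty}\sigma_{-z}(n)n^{z/2-1}\L^{-}(n,z),
\end{equation*}
while the right-hand side collapses to
\begin{equation*}
-\frac{\pi}{12}\bigl[\zeta(-z)\zeta(2+z)-\zeta(z)\zeta(2-z)\bigr]+\frac{1}{2}\frac{d}{dz}\bigl[(1-\sec(\pi z/2))\zeta(1-z)\zeta(1+z)\bigr].
\end{equation*}

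For part (i) I would then set $z=\omega$. By the functional equation \eqref{zetafe}, the hypothesis $\zeta(\omega)=0$ forces $\zeta(1-\omega)=0$ as well. Consequently the second summand of the $-\pi/12$ bracket vanishes, and in the product-rule expansion of the derivative every term carrying $\zeta(1-z)$ as a factor dies at $z=\omega$. The sole survivor is $(1-\sec(\pi\omega/2))(-\zeta'(1-\omega))\zeta(1+\omega)$, which after multiplication by the outer $1/2$ matches the right-hand side claimed in (i).

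For part (ii) I would substitute $z=-4k-2$ with $k\in\mathbb{N}$. Now $\zeta(z)=\zeta(-4k-2)=0$ and $\zeta(2+z)=\zeta(-4k)=0$ make the entire $-\pi/12$ bracket vanish. In the derivative, two clean observations finish the computation: at $z=-4k-2$ we have $\sec(-(2k+1)\pi)=-1$, so the coefficient $1-\sec(\pi z/2)$ evaluates to $2$, and $\tan(-(2k+1)\pi)=0$, so $\frac{d}{dz}\sec(\pi z/2)$ vanishes there, killing the term in which the secant factor is differentiated. The product rule then produces $2[\zeta(4k+3)\zeta'(-4k-1)-\zeta'(4k+3)\zeta(-4k-1)]$, and the outer $1/2$ cancels the $2$ to yield the stated identity. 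No genuine obstacle arises: the entire argument is a specialization of Corollary \ref{dedez} followed by bookkeeping of which zeta and trigonometric factors vanish at the chosen $z$, which is precisely why the authors describe the proof as easy.
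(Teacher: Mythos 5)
Your proposal is correct and is exactly the route the paper intends (the paper omits the proof but explicitly presents Corollary \ref{zeros1} as the specialization of Corollary \ref{dedez} at a zero of $\zeta$, with $\a=\b=\pi$ so that the two $\Omega$-series combine into $\L^{-}$). All the vanishing bookkeeping checks out: $\zeta(1-\omega)=0$ kills every term except $-\tfrac12(1-\sec(\pi\omega/2))\zeta(1+\omega)\zeta'(1-\omega)$ in part (i), and in part (ii) the values $\sec(-(2k+1)\pi)=-1$ and $\tan(-(2k+1)\pi)=0$ give precisely the stated combination $\zeta(4k+3)\zeta'(-4k-1)-\zeta'(4k+3)\zeta(-4k-1)$.
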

We now prove Theorems \ref{mg1sqcor}-\ref{dedez0}. We begin with that of Theorem \ref{dedez0} first since it follows from the above corollary.
\begin{proof}[Theorem \textup{\ref{dedez0}}][]
Let $z\to 0$ in Corollary \ref{dedez}. The only thing to be shown is that the right-hand side of \eqref{dedezeqn} reduces, as $z\to 0$, to that of \eqref{dedez0eqn}. To that end, first note that
\begin{align}\label{ab144}
\lim_{z\to 0}\left(-\frac{\pi}{12}\left(\frac{\a}{\b}\right)^{\frac{z}{2}}\left\{\frac{\a}{\b}\zeta(-z)\zeta(2+z)-\frac{\b}{\a}\zeta(z)\zeta(2-z)\right\}\right)&=-\frac{\pi}{12}\left(\frac{\a}{\b}-\frac{\b}{\a}\right)\left(\frac{-1}{2}\right)\left(\frac{\pi^2}{6}\right)\nonumber\\
&=\frac{\pi}{144}\left(\a^2-\b^2\right),
\end{align}
since $\a\b=\pi^2$. As $z\to 0$,
\begin{align*}
\sec\left(\frac{\pi z}{2}\right)=1+\frac{\pi^2z^2}{8}+O\left(|z|^4\right).
\end{align*}
This, along with \eqref{zetsti}, implies that as $z\to 0$,
\begin{equation}\label{seczz}
\sec\left(\frac{\pi z}{2}\right)\zeta(1-z)\zeta(1+z)=-\frac{1}{z^2}+\left(\g^2+2\g_1-\frac{\pi^2}{8}\right)+O\left(|z|^2\right).
\end{equation}
Since 
\begin{equation*}
\left(\frac{\a}{\b}\right)^z=1+z\log\left(\frac{\a}{\b}\right)+\frac{z^2}{2!}\log^{2}\left(\frac{\a}{\b}\right)+\frac{z^3}{3!}\log^{3}\left(\frac{\a}{\b}\right)+O\left(|z|^4\right),
\end{equation*}
we have
\begin{align}\label{abzz}
\left(\frac{\a}{\b}\right)^z\zeta(1-z)\zeta(1+z)&=-\frac{1}{z^2}-\frac{\log(\a/\b)}{z}+\left(\g^2+2\g_1-\frac{1}{2}\log^{2}\left(\frac{\a}{\b}\right)\right)\nonumber\\
&\quad+\log\left(\frac{\a}{\b}\right)\left(\g^2+2\g_1-\frac{1}{6}\log^{2}\left(\frac{\a}{\b}\right)\right)z+O\left(|z|^2\right)
\end{align}
as $z\to 0$. Hence from \eqref{seczz} and \eqref{abzz},
\begin{align}\label{dsecabzz}
&\frac{d}{dz}\left\{\left(\left(\frac{\a}{\b}\right)^{z}-\sec\left(\frac{\pi z}{2}\right)\right)\zeta(1-z)\zeta(1+z)\right\}\nonumber\\
&=\frac{\log(\a/\b)}{z^2}+\log\left(\frac{\a}{\b}\right)\left(\g^2+2\g_1-\frac{1}{6}\log^{2}\left(\frac{\a}{\b}\right)\right)+O(|z|),
\end{align}
as $z\to 0$. Therefore \eqref{seczz} and \eqref{dsecabzz} imply
\begin{align*}
&\lim_{z\to 0}\frac{1}{2}\left(\frac{\a}{\b}\right)^{-\frac{z}{2}}\bigg[\frac{d}{dz}\left\{\left(\left(\frac{\a}{\b}\right)^{z}-\sec\left(\frac{\pi z}{2}\right)\right)\zeta(1-z)\zeta(1+z)\right\}\nonumber\\
&\qquad+\log\left(\frac{\a}{\b}\right)\sec\left(\frac{\pi z}{2}\right)\zeta(1-z)\zeta(1+z)\bigg]\nonumber\\
&=\frac{1}{48}\log\left(\frac{\a}{\b}\right)\left\{48\g^2+96\g_1-3\pi^2-4\log^{2}\left(\frac{\a}{\b}\right)\right\},
\end{align*}
which, when combined with \eqref{ab144}, results in the right side of \eqref{dedez0eqn}.
\end{proof}
\begin{proof}[Theorem \textup{\ref{mg1sqcor}}][]
In Theorem \ref{zetasquared}, replace $m$ by $-m$, where the new $m$ is greater than $1$. This renders the finite sum on the right to be zero. Then one uses $\zeta(1-2m)=-B_{2m}/(2m)$ and the result
\begin{align*}
\frac{\zeta'(1-2m)}{\zeta(1-2m)}&=\log(2\pi)-\frac{\G'(2m)}{\G(2m)}-\frac{\zeta'(2m)}{\zeta(2m)}\nonumber\\
&=\log(2\pi)+\g-\sum_{n=0}^{2m-2}\frac{1}{n+1}-\frac{\zeta'(2m)}{\zeta(2m)},
\end{align*}
which easily follows from the functional equation \eqref{zetafe} and the well-known relation \cite[p.~903, Formula \textbf{8.362.1}]{gr} 
\begin{equation}\label{psipsi}
\frac{\G'(w)}{\G(w)}=-\g-\sum_{n=0}^{\infty}\left(\frac{1}{w+n}-\frac{1}{n+1}\right).
\end{equation}
\end{proof}
\begin{proof}[Theorem \textup{\ref{mg2sq}}][]
Let $m=-1$ in Theorem \ref{zetasquared} and use \eqref{choi}.
\end{proof}
As remarked in Section \ref{mresults}, Koshliakov's first identity \eqref{hsok1} is an easy consequence of our Theorem \ref{mg1sqcor}. We end this section with a proof of his second identity, namely, \eqref{hsok2}. To see this, let $m=-1$ in \eqref{lerchhh} so as to get
\begin{equation}\label{ksec}
\frac{1}{144}\left(\g+12\zeta'(-1)\right)+\sum_{n=1}^{\infty}nd(n)\Omega(n)=-\frac{1}{32}.
\end{equation}
Next, differentiate both sides of \eqref{zetafe} with respect to $s$, then set $s=-1$ and use $\G'(2)=1-\g$ with the last equality resulting from \eqref{psipsi}. This gives
\begin{equation*}
\zeta'(-1)=\frac{1}{12}\left(1-\g-\log(2\pi)\right)+\frac{1}{2\pi^2}\zeta'(2).
\end{equation*}
Substituting the above equation in \eqref{ksec} gives \eqref{hsok2}.
\begin{remark}
Note that we could have also let $z\to 0$ in Theorem \ref{zetazetathm} so as to obtain \eqref{hsok2}.
\end{remark}


\section{The second generalization of Theorem \ref{zetasquared} with an additional parameter $N$}\label{withN}
We prove Theorem \ref{zetasquaredN} as a consequence of a more general theorem. The latter, given below, is an analogue of \cite[Theorem 1.1]{dixitmaji1}. 
\begin{theorem}\label{analoguedixitmaji}
Let $N\in\mathbb{N}$ and $h \in \mathbb{Z}$ such that $h \neq \frac{N+1}{2}$. Let $\a,\b>0$ such that $\a\b^{N}=\pi^{N+1}$. Then
\begin{equation}\label{pasa}
\sum_{n=1}^{\infty} n^{N-2h}d(n)\Omega_{\alpha}{(n^N)}= P(\a) + S(\a),
\end{equation}
where
{\allowdisplaybreaks\begin{align}\label{pa}
P(\alpha)&:=\zeta^2(2h-N)\left\{-\left(\gamma+ \log\left(\frac{\alpha}{\pi}\right)\right) +N\frac{\zeta'(2h-N)}{\zeta(2h-N)}\right\}+
\Big( \frac{\pi}{\alpha}\Big)^{\frac{2(N-2h+1)}{N}}\nonumber\\
&\quad\times \frac{1}{N^2}\zeta^2\left(\frac{2h-1}{N}\right)\csc\left(\frac{\pi(2h-1)}{2N}\right)\left\{\frac{\pi}{4}\cot\left(\frac{(2h-1)\pi}{2N}\right) + N\gamma - \log\left(\frac{\alpha}{\pi}\right) - \frac{\zeta'\left(\frac{2h-1}{N}\right)}{\zeta\left(\frac{2h-1}{N}\right)}\right\}\nonumber\\
&\quad+\sum_{k=0}^{\lfloor \frac{h}{N} \rfloor}(-1)^{k+1}\pi^{1-4k}\alpha^{4k-2}\zeta^2(2k)\zeta^2(2h-2Nk),
\end{align}}
and
\begin{align}\label{sodd}
S(\alpha):= \frac{(-1)^{h+1}}{N}\bigg(\frac{\pi}{\alpha}\bigg)^{\frac{2(N-2h+1)}{N}}\sum_{j=-\frac{(N-1)}{2}}^{\frac{N-1}{2}}e^{\frac{ij(2h-1)\pi}{N}}\sum_{n=1} ^{\infty}n^{\frac{1-2h}{N}}d(n)\Omega_{\b}\left(e^{-\frac{\pi i j}{N}}n^{\frac{1}{N}}\right)
\end{align}
for $N$ odd, and
\begin{align}\label{seven}
S(\alpha):= \frac{(-1)^{h+1}}{N}\bigg(\frac{\pi}{\alpha}\bigg)^{\frac{2(N-2h+1)}{N}}\sum_{j=-\frac{N}{2}}^{\frac{N}{2}-1}e^{\frac{i(2j+1)(2h-1)\pi}{2N}}\sum_{n=1} ^{\infty}n^{\frac{1-2h}{N}}d(n)\Omega_{\b}\left(e^{-\frac{\pi i (2j+1)}{2N}}n^{\frac{1}{N}}\right)
\end{align}
for $N$ even.
\end{theorem}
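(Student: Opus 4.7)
The approach is a contour-integral argument based on the Mellin-Barnes representation of $\Omega_\rho$ in Lemma \ref{omalxz} (specialized to $z=0$), paralleling the structure of Theorem \ref{Main Theorem} but with an additional reflection step that weaves in the parameter $N$. The plan is to write both the left-hand side and $S(\alpha)$ as a single Mellin-Barnes integral, taken on two different vertical lines, and to identify the difference between the two lines as a sum of residues equal to $P(\alpha)$.

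First I would use Lemma \ref{omalxz} with $z=0$ and interchange sum and integral (justified by \eqref{strivert} and absolute convergence) to obtain, for $c>\max\{1,(N+1-2h)/N\}$,
\begin{equation*}
\sum_{n=1}^{\infty} n^{N-2h}d(n)\Omega_{\alpha}(n^{N}) \;=\; \frac{1}{2\pi i}\int_{(c)} \frac{\zeta^{2}(1-s)\,\zeta^{2}(Ns-N+2h)}{2\cos(\pi s/2)}\left(\frac{\alpha^{2}}{\pi^{2}}\right)^{-s} ds.
\end{equation*}
Next, for $N$ odd, I would massage $S(\alpha)$ into a matching contour integral. Inserting Lemma \ref{omalxz} into each rotated $\Omega_{\beta}(e^{-i\pi j/N}n^{1/N})$ produces an integrand carrying a factor $e^{i\pi j s/N}$; summing over $n$ gives $\zeta^{2}((s+2h-1)/N)$ inside. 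The outer $j$-sum then becomes a Dirichlet-kernel-type sum
\begin{equation*}
\sum_{j=-(N-1)/2}^{(N-1)/2} e^{i\pi j(s+2h-1)/N} \;=\; \frac{\sin(\pi(s+2h-1)/2)}{\sin(\pi(s+2h-1)/(2N))} \;=\; \frac{(-1)^{h-1}\cos(\pi s/2)}{\sin(\pi(s+2h-1)/(2N))},
\end{equation*}
where the last equality uses that $h\in\mathbb{Z}$. The sign $(-1)^{h-1}$ cancels the prefactor $(-1)^{h+1}$ in \eqref{sodd}, and after the substitution $s\mapsto Nw$ combined with $\alpha\beta^{N}=\pi^{N+1}$ (equivalently $\beta^{2}/\pi^{2}=(\pi/\alpha)^{2/N}$), a direct calculation shows that the resulting $w$-integrand for $S(\alpha)$ coincides identically with the one obtained from the LHS integral above under the change of variables $s=(N+1-2h)/N-w$; only the vertical line of integration differs.

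The heart of the proof is then a contour shift in the LHS integral from $\textup{Re}(s)=c$ to the line corresponding (via $s=(N+1-2h)/N-w$) to the $S(\alpha)$ contour, with residues collected en route producing $P(\alpha)$. The relevant poles are: (i) a double pole at $s=0$ from $\zeta^{2}(1-s)$; (ii) a double pole at $s=(N+1-2h)/N$ from $\zeta^{2}(Ns-N+2h)$, distinct from the first under the hypothesis $h\neq(N+1)/2$; and (iii) simple poles at $s=1-2k$ for $0\le k\le\lfloor h/N\rfloor$ from the zeros of $\cos(\pi s/2)$, the higher ones being killed by the trivial zeros $\zeta^{2}(2h-2Nk)=0$. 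For the double pole at $s=0$ I would expand $\zeta(1-s)=-1/s+\gamma+\gamma_{1}s+O(s^{2})$, $\zeta^{2}(Ns-N+2h)=\zeta^{2}(2h-N)+2N\zeta(2h-N)\zeta'(2h-N)\,s+O(s^{2})$, and $(\alpha^{2}/\pi^{2})^{-s}=1-2s\log(\alpha/\pi)+O(s^{2})$, and extract the $1/s$ coefficient to recover the first summand of $P(\alpha)$. For the second double pole, the local variable $s=(N+1-2h)/N+t$ converts $\zeta^{2}(Ns-N+2h)$ to $\zeta^{2}(1+Nt)=1/(Nt)^{2}+2\gamma/(Nt)+O(1)$, while $\cos(\pi s/2)$ expands about $\sin(\pi(2h-1)/(2N))$; the $\cot(\pi(2h-1)/(2N))$ appearing in $P(\alpha)$ is picked up from the derivative term of this expansion, and the $N\gamma-\log(\alpha/\pi)-\zeta'/\zeta$ combination from the subleading coefficients. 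The simple-pole residues assemble into the finite sum, with the $k=0$ check using $\zeta^{2}(0)=1/4$ to match the $-\pi\zeta^{2}(2h)/(4\alpha^{2})$ coefficient.

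The main obstacle is the bookkeeping at the two double poles: pairing up the logarithmic-derivative term, the Euler $\gamma$ from the Stieltjes expansion of $\zeta$, the $\log(\alpha/\pi)$ from the exponential, and (for the second pole) the $\cot$ from $\cos(\pi s/2)$, all with correct signs. A secondary check is verifying that the would-be simple poles of $\sec(\pi s/2)$ at $s=2k+1$ for $k\ge 1$ are cancelled by the double zeros of $\zeta^{2}(1-s)$ there, leaving simple zeros in the integrand and hence no residue contribution, so that the pole list above is exhaustive. The case of even $N$ runs identically in outline: only the Dirichlet-kernel sum in the $S(\alpha)$ step changes to $\sum_{j=-N/2}^{N/2-1} e^{i\pi(2j+1)(s+2h-1)/(2N)}$, whose closed form (now a ratio involving $\cos(\pi(s+2h-1)/2)$) reproduces the variant \eqref{seven} in place of \eqref{sodd}, while every other step goes through verbatim.
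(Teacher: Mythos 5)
Your proposal is correct and follows essentially the same route as the paper: the same Mellin--Barnes representation of the left-hand side, the same contour shift past the double poles at $s=0$ and $s=\frac{N+1-2h}{N}$ and the simple poles at $s=1-2k$ to produce $P(\alpha)$, and the same Dirichlet-kernel identity to identify the leftover integral with $S(\alpha)$. The only (immaterial) difference is direction: the paper transforms the shifted integral into $S(\alpha)$ via $s\mapsto 1-s$ and the functional equation of $\zeta$, whereas you run the same computation backwards from the definition of $S(\alpha)$.
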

\begin{proof}
We prove the result only in the case $h>\frac{N}{2}, h\neq\frac{N+1}{2}$. The case $h\leq N/2$ can be proved in a similar way.

Letting $z=0$ in Lemma \ref{omalxz}, replacing $\rho$ and $x$ by $\a$ and $n^{N}$ respectively, and using the resulting expression in the first step below, we see that for $c=$Re$(s)>\max\left\{1,\frac{N-2h+1}{N}\right\}=1$,
\begin{align*}
\sum_{n=1}^{\infty} n^{N-2h}d(n)\Omega_{\alpha}{(n^N)}&=\sum_{n=1}^{\infty}\frac{1}{2\pi i}\int_{(c)}\frac{d(n)}{n^{Ns-N+2h}}\frac{\zeta^{2}(1-s)}{2\cos\left(\frac{\pi s}{2}\right)}\left(\frac{\a}{\pi}\right)^{-2s}\, ds\nonumber\\
&=\frac{1}{2\pi i}\int_{(c)} \frac{\zeta^2(1-s) \zeta^2(Ns-N+2h)}{2\cos(\frac{\pi s}{2})}\left(\frac{\a}{\pi}\right)^{-2s}\, ds,
\end{align*}
where in the last step we interchanged the order of summation and convergence which is justified by the absolute convergence since $c>1$. We now evaluate this integral by shifting the line of integration.

Consider the contour determined by the line segments $[c-iT,c+iT],[c+iT,-\l +iT],[-\l +iT,-\l -iT]$ and $[-\l-iT,c-iT]$, where $\l>\frac{2h}{N}-1$. The integrand has double order poles at $0$ and $\frac{N-2h+1}{N}$ due to $\zeta^{2}(1-s)$ and $\zeta^2(Ns-N+2h)$ respectively. Also, it has simple poles at $1$ and at negative odd integers (due to $\cos\left(\frac{\pi s}{2}\right)$). However, the poles $-(2\ell-1)$ for $\ell>\lfloor\frac{h}{N}\rfloor$ get canceled by the trivial zeros of $\zeta^2(Ns-N+2h)$ at negative even integers. To observe this, note that the trivial zeros of $\zeta(Ns-N+2h)$ are at $\frac{N-2h-2\ell}{N}$, where $\ell\in\mathbb{N}$. A pole of the form $-(2j-1)$ would cancel with a zero of the form $\frac{N-2h-2\ell}{N}$ only when $\ell=Nj-h$, which implies $j>h/N$. Thus, only the simple poles at $-1, -3, \cdots, -(2\lfloor\frac{h}{N}\rfloor-1)$ contribute to the integral. The residues at the poles $0, \frac{N-2h+1}{N}, 1$, and at $-(2j-1)$, where $1\leq j\leq\left\lfloor\frac{h}{N}\right\rfloor$, are given by
{\allowdisplaybreaks\begin{align}\label{residuesN}
R_{0}&=\zeta^2(2h-N)\left\{-\left(\gamma +\log(\tfrac{\a}{\pi})\right) + N\frac{\zeta'(2h-N)}{\zeta(2h-N)}\right\},\nonumber\\
R_1&=-\frac{\pi}{4\a^2}\zeta^{2}(2h),\nonumber\\
R_{\frac{N-2h+1}{N}}&= \left(\frac{\alpha}{\pi}\right)^{-\frac{2(N-2h+1)}{N}}\frac{\zeta^2\left(\tfrac{2h-1}{N}\right)}{N^2\sin\left(\tfrac{\pi(2h-1)}{2N}\right)}\left\{\tfrac{\pi}{4}\cot\left(\tfrac{(2h-1)\pi}{2N}\right) + N\gamma - \log\left(\tfrac{\alpha}{\pi}\right) - \frac{\zeta'\left(\frac{2h-1}{N}\right)}{\zeta\left(\tfrac{2h-1}{N}\right)}\right\},\nonumber\\
R_{-(2j-1)}&=(-1)^{j+1}\pi^{1-4j}\alpha^{4j-2}\zeta^2(2j)\zeta^2(2h-2Nj).
\end{align}}
By Cauchy's residue theorem,
{\allowdisplaybreaks\begin{align*}
&\frac{1}{2\pi i }\left[\int_{c-iT}^{c+iT}+ \int_{c+iT}^{-\lambda +iT} + \int_{-\lambda +iT}^{-\lambda -iT} +\int_{-\lambda -iT}^{c-iT}\right]\frac{\zeta^2(1-s) \zeta^2(Ns-N+2h)}{2\cos(\frac{\pi s}{2})}\left(\frac{\a}{\pi}\right)^{-2s}\, ds\nonumber\\
&= R_{0} + R_{1} +R_{\frac{1+N-2h}{N}}+\sum_{j=1}^{\lfloor \frac{h}{N} \rfloor }R_{-(2j-1)}.
\end{align*}}
From \eqref{cosbound} and the elementary bounds on the Riemann zeta function, it can be seen that as $T\to\infty$, the integrals along the horizontal line segments go to zero.
\begin{align}\label{tbp}
\frac{1}{2\pi i }\int_{(c)}\frac{\zeta^2(1-s) \zeta^2(Ns-N+2h)}{2\cos(\frac{\pi s}{2})}\left(\frac{\a}{\pi}\right)^{-2s}\, ds=R_{0} + R_{1} +R_{\frac{1+N-2h}{N}}+\sum_{j=1}^{\lfloor \frac{h}{N} \rfloor }R_{-(2j-1)}+J_{\a}(N,h),
\end{align}
where
\begin{equation}\label{janh}
J_{\a}(N,h):=\frac{1}{2\pi i }\int_{(-\l)}\frac{\zeta^2(1-s) \zeta^2(Ns-N+2h)}{2\cos(\frac{\pi s}{2})}\left(\frac{\a}{\pi}\right)^{-2s}\, ds.
\end{equation}
Replace $s$ by $1-s$ in \eqref{janh}, use the functional equation \eqref{zetafe} and simplify to obtain
\begin{align*}
J_{\a}(N,h)= \frac{2^{4h-1}\pi^{4h}}{2\pi i \alpha^2}\int_{(1+\lambda)}\frac{\zeta^2(s)\zeta^2(1+Ns-2h)\Gamma^2(1+Ns-2h)\sin^2(\frac{N \pi s}{2})}{2^{2Ns}\pi^{2s(N+1)}\alpha^{-2s}\sin(\frac{\pi s}{2})}ds.
\end{align*}
Employ the change of variable $s_1=1+Ns-2h$ so that
\begin{align}\label{janh1}
J_{\a}(N,h)&=\frac{2}{2\pi i N}\bigg(\frac{\pi}{\alpha}\bigg)^{\frac{2(N+1-2h)}{N}}\int_{(c_{1})}\left(\frac{(2\pi)^{2}\pi^{\frac{2}{N}}}{\alpha^{\frac{2}{N}}}\right)^{-s_{1}}\nonumber\\
&\quad\times\zeta^2\left(\frac{s_{1}+2h-1}{N}\right)\zeta^2(s_{1})\Gamma^2(s_{1})\frac{\sin^2\left(\frac{\pi(s_{1}+2h-1)}{2}\right)}{\sin\left(\frac{\pi(s_{1}+2h-1)}{2N}\right)}ds_{1}.
\end{align}
Now
\begin{align}\label{Chebyshev n^N}
\frac{\sin(Nz)}{\sin(z)}= \sum_{j=-(N-1)}^{N-1}{\vphantom{\sum}}'' e^{ijz},
\end{align} 
where $''$ implies summation over $j=-(N-1), -(N-3),\cdots, N-3, N-1$.
Observe that we can write
\begin{equation}\label{zetaexpanse}
\zeta^2(s_1)\zeta^2\left(\frac{s_1+2h-1}{N}\right)=\sum_{m,n=1}^{\infty}m^{-\frac{(2h-1)}{N}}d(m)d(n)\left(nm^{\frac{1}{N}}\right)^{-s_1},
\end{equation} 
for, $\l>\frac{2h}{N}-1$ implies $c_1=$Re$(s_1)=1+N(1+\l)-2h>1$, and $\frac{c_{1}+2h-1}{N} =$ Re$\left(\frac{s_{1}+2h-1}{N}\right)=1+ \lambda > \frac{2h}{N} >1 $ since $h>N/2$. Hence substituting \eqref{Chebyshev n^N} and \eqref{zetaexpanse} in \eqref{janh1}, and interchanging the order of integration and the double sum, we have upon simplification,
\begin{align}\label{janh2}
J_{\a}(N,h)&=\frac{2(-1)^{h+1}}{2\pi i N}\bigg(\frac{\pi}{\alpha}\bigg)^{\frac{2(N-2h+1)}{N}}\sum_{j=-(N-1)}^{N-1}{\vphantom{\sum}}''e^{\frac{ij(2h-1)\pi}{2N}}\sum_{m,n=1}^{\infty}\frac{d(m)d(n)}{m^{\frac{2h-1}{N}}}\int_{(c_{1})}\frac{\G^2(s_1)\cos\left(\frac{\pi s_1}{2}\right)}{(2\pi)^{2s_1}(nX_{m,N,j})^{s_1}}\, ds_1,
\end{align}
where
\begin{equation}\label{xmnj}
X_{m,N,j}:=\left(\frac{\pi}{\a}\right)^{\frac{2}{N}}e^{-\frac{ij\pi}{2N}}m^{\frac{1}{N}}.
\end{equation}
Now from \eqref{ana1} we have, for $d=$Re$(s)>0$ and Re$(a)>0$,
\begin{equation*}
\frac{1}{2\pi i}\int_{(d)}2^{s-2}a^{-s}\G^{2}\left(\frac{s}{2}\right)x^{-s}\, ds=K_{0}(ax).
\end{equation*}
By the principle of analytic continuation, both sides of the above equation are valid for Re$(x)>0$. Hence, for $d>0$, the identity derived from \eqref{ana2} by replacing $\rho$ by $\pi$, namely,
\begin{equation}\label{k0k0}
K_0\left(4\pi\epsilon\sqrt{nx}\right)+K_0\left(4\pi\bar{\epsilon}\sqrt{nx}\right)=\frac{1}{2\pi i}\int_{(d)}\frac{\G^{2}(s)\cos\left(\frac{\pi s}{2}\right)}{(2\pi)^{2s}(nx)^s}\, ds,
\end{equation}
is valid for $-\frac{\pi}{2}<\arg(x)<\frac{\pi}{2}$. Now from \eqref{xmnj}, 
\begin{equation*}
-\frac{\pi}{2}<-\frac{(N-1)\pi}{2N}\leq\arg(X_{m, N, j})=-\frac{j\pi}{2N}\leq \frac{(N-1)\pi}{2N}<\frac{\pi}{2}.
\end{equation*}
Therefore invoking \eqref{k0k0} with $x$ replaced by $X_{m, N, j}$, substituting the resultant in \eqref{janh2} and noting \eqref{minusspl}, we deduce that
\begin{align}\label{janh3}
J_{\a}(N,h)&=\frac{(-1)^{h+1}}{N}\bigg(\frac{\pi}{\alpha}\bigg)^{\frac{2(N-2h+1)}{N}}\sum_{j=-(N-1)}^{N-1}{\vphantom{\sum}}''e^{\frac{ij(2h-1)\pi}{2N}}\sum_{m=1}^{\infty}\frac{d(m)\Omega\left(X_{m, N, j}\right)}{m^{\frac{2h-1}{N}}}\nonumber\\
&=\frac{(-1)^{h+1}}{N}\bigg(\frac{\pi}{\alpha}\bigg)^{\frac{2(N-2h+1)}{N}}\sum_{j=-(N-1)}^{N-1}{\vphantom{\sum}}''e^{\frac{ij(2h-1)\pi}{2N}}\sum_{m=1}^{\infty}\frac{d(m)}{m^{\frac{2h-1}{N}}}\Omega_{\b}\left(e^{-\frac{ij\pi}{2N}}m^{\frac{1}{N}}\right),
\end{align}
where the last step follows from the fact $\Omega\left(X_{m, N, j}\right)=\Omega_{\b}\left(e^{-\frac{ij\pi}{2N}}m^{\frac{1}{N}}\right)$, since $\a\b^{N}=\pi^{N+1}$. Substituting \eqref{residuesN} and \eqref{janh3} in \eqref{tbp} and distinguishing two cases according to the parity of $N$, we finally arrive at \eqref{pasa}, with $P(\a)$ defined in \eqref{pa} and $S(\a)$ defined in \eqref{sodd} and \eqref{seven}. This completes the proof of Theorem \ref{analoguedixitmaji} for $h>N/2$.
\end{proof}
\begin{remark}
The result in the case when $h=\frac{N+1}{2}$ can also be obtained by arguing in the similar way as in the proof of Theorem \ref{analoguedixitmaji}, however, the residual terms are very complicated, which is why we refrain from explicitly giving it here.
\end{remark}
\begin{proof}[Theorem \textup{\ref{zetasquaredN}}][]
Let $h=\frac{N+1}{2}+Nm$, $m\in\mathbb{Z}\backslash\{0\}$, in Theorem \ref{analoguedixitmaji}. Multiply both sides of the resulting identity by $\a^{-\frac{4Nm}{N+1}}$. Substituting $\pi$ by $\a^{\frac{1}{N+1}}\b^{\frac{N}{N+1}}$, we see that
\begin{align*}
\a^{-\frac{4Nm}{N+1}}\left(\frac{\pi}{\a}\right)^{-4m}=\b^{-\frac{4Nm}{N+1}},
\end{align*}
and
\begin{align*}
&\sum_{k=0}^{\lfloor\frac{N+1}{2N}+m\rfloor}(-1)^{k+1}\pi^{1-4k}\alpha^{4k-2}\zeta^2(2k)\zeta^2(N+1+2N(m-k))\nonumber\\
&=-\pi 2^{2N-2+4Nm}\sum_{k=0}^{\left\lfloor\frac{N+1}{2N}+m\right\rfloor}\frac{(-1)^{k}2^{4k(1-N)}B_{2k}^{2}B_{N+1+2N(m-k)}^{2}}{(2k)!^2(N+1+2N(m-k))!^2}\a^{\frac{4k}{N+1}}\b^{2N+\frac{4N^2(m-k)}{N+1}},
\end{align*}
where we have also employed \eqref{zetaevenint} twice.
\end{proof}

Now we state the counterpart of Theorem \ref{zetasquaredN} for $N$ even. 
\begin{theorem}\label{zetasquaredNeven}
Let $N$ be an even positive integer. Let $\a,\b>0$ with $\a\b^{N}=\pi^{N+1}$. Then
\begin{align*}
&\a^{-\left(\frac{4Nm-2}{N+1}\right)}\left\{\zeta^{2}(2Nm)\left(\g+\log\left(\frac{\a}{\pi}\right)-N\frac{\zeta'(2Nm)}{\zeta(2Nm)}\right)+\sum_{n=1}^{\infty}\frac{d(n)}{n^{2Nm}}\Omega_{\a}\left(n^N\right)\right\}\nonumber\\
&=\frac{1}{N}\b^{-\left(\frac{4Nm-2}{N+1}\right)}\bigg\{\frac{(-1)^m}{\cos\left(\frac{\pi}{2N}\right)}\zeta^{2}\left(2m+1-\tfrac{1}{N}\right)\bigg(\g+\frac{1}{N}\log\left(\frac{\b}{\pi}\right)-\frac{1}{N}\frac{\zeta'\left(2m+1-\frac{1}{N}\right)}{\zeta\left(2m+1-\frac{1}{N}\right)}\nonumber\\
&\quad+\frac{\pi}{4N}\tan\left(\frac{\pi}{2N}\right)\bigg)+(-1)^{\frac{N}{2}}\sum_{j=-\frac{N}{2}}^{\frac{N}{2}-1}e^{-\frac{i(2j+1)\pi}{N}}\sum_{n=1}^{\infty}\frac{d(n)}{n^{2m+1-\frac{1}{N}}}\Omega_{\b}\left(e^{-\frac{i(2j+1)\pi}{2N}}n^{1/N}\right)\bigg\}\nonumber\\
&\quad-\pi 2^{2N+4Nm-4}\sum_{j=0}^{m}\frac{(-1)^{j}2^{4j(1-N)}B_{2j}^{2}B_{N+2N(m-j)}^{2}}{(2j)!^2(N+2N(m-j))!^2}\a^{\frac{4j}{N+1}}\b^{2N+\frac{4N^2(m-j)-2N}{N+1}}.
\end{align*}
\end{theorem}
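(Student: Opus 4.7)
The plan is to derive Theorem \ref{zetasquaredNeven} from Theorem \ref{analoguedixitmaji} by a specialization of $h$, mirroring the proof of Theorem \ref{zetasquaredN}. For $N$ odd, the specialization was $h = \frac{N+1}{2} + Nm$; the even analogue is $h = \frac{N}{2} + Nm$ with $m \in \mathbb{Z}\setminus\{0\}$. This produces $2h - N = 2Nm$ and $\frac{2h-1}{N} = 2m + 1 - \frac{1}{N}$, so $\zeta^{2}(2h-N) = \zeta^{2}(2Nm)$ and $\zeta^{2}\bigl(\tfrac{2h-1}{N}\bigr) = \zeta^{2}(2m+1 - 1/N)$, matching the two zeta values on the LHS and RHS of the theorem. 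The exponent $\tfrac{2(N - 2h + 1)}{N}$ simplifies to $\tfrac{2}{N} - 4m$, and multiplying \eqref{pasa} through by $\a^{-(4Nm-2)/(N+1)}$ together with the substitution $\pi = \a^{1/(N+1)}\b^{N/(N+1)}$ collapses the $\a$-powers, leaving the advertised prefactor $\b^{-(4Nm-2)/(N+1)}$ on the right.

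The next step is to evaluate the trigonometric quantities in $P(\a)$ at this $h$. Writing $\tfrac{\pi(2h-1)}{2N} = \tfrac{\pi}{2} + \pi m - \tfrac{\pi}{2N}$ and using angle-addition, one finds $\csc\!\bigl(\tfrac{\pi(2h-1)}{2N}\bigr) = \tfrac{(-1)^{m}}{\cos(\pi/(2N))}$ and $\cot\!\bigl(\tfrac{\pi(2h-1)}{2N}\bigr) = \tan\!\bigl(\tfrac{\pi}{2N}\bigr)$. Substituting these into the $\zeta^{2}\bigl(\tfrac{2h-1}{N}\bigr)$ term of \eqref{pa} and pulling out an overall $\tfrac{1}{N}$ produces the coefficient $\tfrac{(-1)^{m}}{\cos(\pi/(2N))}$ and the trigonometric piece $\tfrac{\pi}{4N}\tan(\pi/(2N))$ inside the braces of the theorem. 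The $\zeta^{2}(2h-N)$ term in $P(\a)$ is then moved to the left-hand side and combined with the original Dirichlet-type series to form the bracketed LHS expression of Theorem \ref{zetasquaredNeven}.

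For the finite sum in \eqref{pa}, note that $\lfloor h/N \rfloor = m$ and $2h - 2Nk = N + 2N(m-k)$ is an even positive integer, so Euler's formula \eqref{zetaevenint} applies to both $\zeta^{2}(2k)$ and $\zeta^{2}(N + 2N(m-k))$. Inserting these, absorbing $\a^{-(4Nm-2)/(N+1)}$ into the power $\a^{4k-2}$, and converting the remaining powers of $\pi$ through $\pi^{N+1} = \a\b^{N}$ produces exactly the Bernoulli sum of the theorem, with the explicit $\pi \cdot 2^{2N+4Nm-4}$ prefactor emerging as the leftover when all full powers of $\pi^{N+1}$ are exchanged for $\a\b^{N}$. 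For the infinite series $S(\a)$, given in the even case by \eqref{seven}, one substitutes $h = N/2 + Nm$, uses $(-1)^{h+1} = -(-1)^{N/2}$ (since $Nm$ is even), and decomposes $e^{i(2j+1)(2h-1)\pi/(2N)}$ by splitting $(2h-1)/(2N) = 1/2 + m - 1/(2N)$; combining the resulting phases with the $\tfrac{1}{N}$ prefactor and the identity $n^{(1-2h)/N} = n^{-(2m+1-1/N)}$ yields the infinite series shown on the right-hand side.

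I expect the phase-and-sign bookkeeping for $S(\a)$ to be the main obstacle: because $h$ is now an integer rather than a half-integer as in the odd case, the splitting of $e^{i(2j+1)(2h-1)\pi/(2N)}$ produces an extra factor of $i(-1)^{j+m}$, which must be absorbed together with $(-1)^{h+1} = -(-1)^{N/2}$, possibly after a re-indexing of the sum $j = -N/2, \dots, N/2 - 1$, in order to reach the compact form stated in the theorem. No new analytic input is required beyond the contour argument already carried out in the proof of Theorem \ref{analoguedixitmaji}; the remainder is bookkeeping strictly parallel to that in the proof of Theorem \ref{zetasquaredN}.
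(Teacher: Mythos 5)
Your proposal is correct and follows exactly the paper's route: the paper's proof is precisely to set $h=\frac{N}{2}+Nm$ in Theorem \ref{analoguedixitmaji}, multiply by $\a^{-(4Nm-2)/(N+1)}$, and simplify as in the proof of Theorem \ref{zetasquaredN}, which is what you do (with the trigonometric evaluations $\csc\bigl(\tfrac{\pi(2h-1)}{2N}\bigr)=\tfrac{(-1)^m}{\cos(\pi/(2N))}$ and $\cot\bigl(\tfrac{\pi(2h-1)}{2N}\bigr)=\tan\bigl(\tfrac{\pi}{2N}\bigr)$ worked out correctly). Your flagged concern about the phase bookkeeping in $S(\a)$ is legitimate but is exactly the "simplify" step the paper also leaves to the reader.
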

\begin{proof}
Let $h=\frac{N}{2}+Nm$ in Theorem \ref{analoguedixitmaji}, multiply both sides of the resulting identity by $\a^{-\frac{(4Nm-2)}{N+1}}$, and simplify as in the proof of Theorem \ref{zetasquaredN}.
\end{proof}

\section{Concluding remarks}\label{cr}
The series in Theorems \ref{zetasquared} and \ref{Main Theorem} involving $\Omega_{\a}(n)$ and $\Omega_{\a}(n, z)$ respectively are analogues of the Eisenstein series on $\textup{SL}_{2}(\mathbb{Z})$ as the latter occur in Ramanujan's formula for $\zeta(2m+1)$, namely, \eqref{zetaodd}, whereas Theorems \ref{zetasquared} and \ref{Main Theorem} are analogues of \eqref{zetaodd} for $\zeta^{2}(2m+1)$. This suggests that these series involving $\Omega_{\a}(n)$ and $\Omega_{\a}(n, z)$ may have ramifications in the theory of modular forms. This will be explored in a future publication.

Ramanujan's formula \eqref{zetaodd} can be obtained by equating the coefficients of $w^n, n\geq 1$, on both sides of 
\begin{align}\label{cotcoth}
\frac{\pi}{2}\cot(\sqrt{w\a})\textup{coth}(\sqrt{w\b})=\frac{1}{2w}+\frac{1}{2}\log\left(\frac{\b}{\a}\right)+\sum_{m=1}^{\infty}\left\{\frac{m\a\textup{coth}(m\a)}{w+m^2\a}+\frac{m\b\textup{coth}(m\b)}{w-m^2\b}\right\},
\end{align}
which is how Ramanujan arrived at his result except that in his version of the above identity in \cite[p.~318, Formula (21)]{lnb}, he missed $\frac{1}{2}\log\left(\frac{\b}{\a}\right)$. Sitaramachandrarao \cite{sitaram} was the first to obtain the correct partial fraction decomposition, that is, \eqref{cotcoth}. See \cite{berndtstraubzeta} for more details. This raises a question: is it possible to derive Theorem \ref{zetasquared} or any of its generalizations, namely, Theorems \ref{Main Theorem} and \ref{zetasquaredN}, using Ramanujan's method of decomposing a certian function into its partial fractions? 

Another question that one may ask is, do there exist Ramanujan-type formulas for $\zeta^{k}(2m+1), k\geq 3$? This would first require finding the right analogues of the functions $1/(e^{2\pi x}-1)$ and $\Omega(x)$ that would help us to find, in turn, the right analogues of the Eisenstein series in \eqref{zetaodd} and of the series in \eqref{zetasquaredeqn} respectively. However, the formulas for higher $k$ may become unwieldy. 

It would be interesting to see if our results, especially Theorems \ref{zetasquared}, \ref{mg1sqcor}, \ref{mg2sq} and \ref{dedez0}, are applicable in the analysis of some special data structures and algorithms in theoretical computer science in a similar way as Kirschenhofer and Prodinger utilized \eqref{zetaodd} and its special cases \eqref{mg1}-\eqref{m0} in \cite{kirprod}.

It would be worthwhile to see if Theorem \ref{zetazetathm}, Corollary \ref{zeros}(i) and Corollary \ref{zeros1}(i) have some implications in analytic number theory, especially in the study of the non-trivial zeros of the Riemann zeta function. Similarly, it would be interesting to see applications of Corollary \ref{zeros}(ii) in transcendental number theory, especially in light of \eqref{aperyappeqn} and its consequence that at least one of the numbers $\zeta(5)$ and $\sum_{n=1}^{\infty}\frac{\sigma_{4}(n)}{n}\L^{+}(n,-4)$ is irrational.

\begin{center}
\textbf{Acknowledgements}
\end{center}
The authors sincerely thank Bibekananda Maji for carefully reading the manuscript and suggesting some changes.


\begin{thebibliography}{00}


\bibitem{apery1}
R.~Ap\'{ery}, \emph{Irrationalit\'{e} de $\zeta(2)$ et $\zeta(3)$}, Ast\'{e}risque~\textbf{61} (1979), 11--13.

\bibitem{apery2}
R.~Ap\'{e}ry, \emph{Interpolation de fractions continues et irrationalit\'{e} de certaines constantes},
Bull. Section des Sci., Tome III, Biblioth\'{e}que Nationale, Paris, 1981, 37--63.

\bibitem{apostol2}
Tom M.~Apostol, \emph{Modular Functions and Dirichlet Series in Number Theory}, Second Edition, Springer 1990. 

\bibitem{apostol-1998a}
Tom M.~Apostol, \emph{Introduction to Analytic Number Theory}, Springer-Verlag, New York (1998).

\bibitem{ballrivoal}
K.~Ball and T.~Rivoal, \emph{Irrationalit\'{e} d'une infinit\'{e} de valeurs de la fonction z\^{e}ta aux entiers impairs} (French), Invent. Math.~\textbf{146} no. 1 (2001), 193--207.

\bibitem{berndtrocky}
B.~C.~Berndt, \emph{Modular transformations and generalizations of several formulae of Ramanujan}, Rocky Mountain J. Math.~\textbf{7} (1977), 147--189.




\bibitem{bdrz1}
B.~C.~Berndt, A.~Dixit, A.~Roy and A.~Zaharescu, \emph{New pathways and connections in number theory and analysis motivated by two incorrect claims of Ramanujan}, Adv. Math.~\textbf{304} (2017), 809--929.


\bibitem{berndtstraubzeta}
B.~C.~Berndt and A.~Straub, \emph{Ramanujan's formula for $\zeta(2n+1)$}, Exploring the Riemann Zeta Function, H.~Montgomery, A.~Nikeghbali and M.~Rassias, Eds., Springer, Cham, 2017, 13--34.



\bibitem{choi2007}
J.~Choi, \emph{Some mathematical constants}, Appl.~Math.~Comput.~\textbf{187} (2007), 122--140.

\bibitem{dixitmaji1}
A.~Dixit and B.~Maji, \emph{Generalized Lambert series and arithmetic nature of odd zeta values}, \emph{Proc.~Roy.~Soc.~Edinburgh, Section A: Mathematics}, pp.~1--29, 2019, \url{https://doi.org/10.1017/prm.2018.146}.


\bibitem{dixitmoll}
A.~Dixit and V.~H.~Moll, \emph{Self-reciprocal functions, powers of the Riemann zeta function and modular-type transformations}, J. Number Thy.~\textbf{147} (2015), 211--249.

\bibitem{duke}
W.~Duke, \emph{Some entries in Ramanujan's Notebooks}, Math.~Proc.~Cambridge Philos.~Soc.~\textbf{144} (2008), 255--266.


\bibitem{glaisher1889}
J.~W.~L.~Glaisher, \emph{On the series which represent the twelve elliptic and the four zeta functions}, Mess.~Math.~\textbf{18} (1889), 1--84.

\bibitem{gr}
I.~S.~Gradshteyn and I.~M.~Ryzhik, eds., \emph{Table of Integrals,
Series, and Products}, 5th ed., Academic Press, San Diego, 1994.

\bibitem{gross1}
E.~Grosswald, \emph{Die Werte der Riemannschen Zetafunktion an ungeraden Argumentstellen}, Nachr.~Akad.~Wiss.~G\"{o}ttinger Math.-Phys. Kl. II (1970), 9--13.

\bibitem{gross2}
E.~Grosswald, \emph{Comments on some formulae of Ramanujan}, Acta Arith.~\textbf{21} (1972), 25--34.



\bibitem{gmr}
S.~Gun, M.~R.~Murty and P.~Rath, \emph{Transcendental values of certain Eichler integrals}, Bull. London Math. Soc.~\textbf{43} No. 5 (2011), 939--952.


\bibitem{apology}
G.~H.~Hardy, \emph{A Mathematician's Apology}, With a Foreword by C.~P.~Snow, Cambridge University Press, 2010.

\bibitem{hardywright}
G.~H.~Hardy and E.~M.~Wright, \emph{An Introduction to the Theory of Numbers}, Edited by Roger Heath-Brown, Joseph Silverman and Andrew Wiles, Oxford University Press, 2008.

\bibitem{hayneszudilin}
A.~K.~Haynes and W.~Zudilin, \emph{Hankel determinants of zeta values}, SIGMA Symmetry Integrability Geom.~ Methods Appl.~\textbf{11}, 101, (2015), 1--5.



%
%

\bibitem{kirprod}
P.~Kirschenhofer and H.~Prodinger, \emph{On some applications of formulae of Ramanujan in the analysis of algorithms}, Mathematika~\textbf{38} No. 1 (1991), 14--33.

\bibitem{koshlyakov-1929a}
N.~S. Koshlyakov.
\newblock On {V}oronoi's sum formula.
\newblock {\em Mess. {M}ath.}, 58:30--32, 1929.

\bibitem{koshliakov2}
N.S.~Koshliakov, \emph{\"Uber eine Verallgemeinerung der
Ramanujyan'schen Identit\"aten}, Izvestiia Akademii nauk Soiuza
Sovetskikh Sotsialisticheskikh Respublik. VII Seriia, Otdelenie
matematicheskikh i estestvennykh nauk (1931), 1089--1102.

\bibitem{koshlondon}
N.~S.~Koshliakov, \emph{On an extension of some formulae of Ramanujan}, Proc. London Math. Soc.~\textbf{2} No. 1 (1936), 26--32.

\bibitem{lagariaseuler}
J.~Lagarias, \emph{Euler's constant: Euler's work and modern developments}, Bull. Amer. Math. Soc.~\textbf{50} No. 4 (2013), 527--628.

\bibitem{lerch}
M.~Lerch, \emph{Sur la fonction $\zeta(s)$ pour valeurs impaires de l'argument}, J. Sci. Math. Astron. pub. pelo Dr. F. Gomes Teixeira, Coimbra~\textbf{14} (1901), 65--69.


\bibitem{malurkar}
S.~L.~Malurkar, \emph{On the application of Herr Mellin's integrals to some series}, J.~Indian~Math.~Soc.~\textbf{16} (1925/26), 130--138.


\bibitem{ober}
F.~Oberhettinger, \emph{Tables of Mellin Transforms}, Springer-Verlag, New York, 1974.


\bibitem{ramnote}
S.~Ramanujan, Notebooks (2 volumes), Tata Institute of Fundamental Research, Bombay, 1957; second ed., 2012.


\bibitem{lnb}
S.~Ramanujan, \emph{The Lost Notebook and Other Unpublished
Papers}, Narosa, New Delhi, 1988.

\bibitem{rivoal}
T.~Rivoal, \emph{La fonction z\^{e}ta de Riemann prend une infinit\'{e} de valeurs irrationnelles aux entiers impairs}, C. R. Acad. Sci. Paris S\'{e}r. I Math.~\textbf{331} no. 4 (2000), 267--270.

\bibitem{rivoalzudilin}
T.~Rivoal and W.~Zudilin, \emph{A note on odd zeta values}, submitted for publication, arXiv:1803.03160v1.

\bibitem{schlomilch}
O.~Schl\"{o}milch, \emph{Ueber einige unendliche Reihen}, Berichte \"{u}ber die Verh. d. K\"{o}nige S\"{a}chsischen Gesell. Wiss. zu Leipzig~\textbf{29} (1877), 101--105.

\bibitem{sitaram}
R.~Sitaramachandrarao, \emph{Ramanujan's formula for $\zeta(2n+1)$}, Madurai Kamaraj Technical Report 4, pp.~70--117.

\bibitem{srichoi}
H.~M.~Srivastava and J.~Choi, \emph{Zeta and $q$-Zeta Functions and Associated Series and Integrals}, Elsevier, Inc., Amsterdam, 2012.

\bibitem{osullivan}
C.~O'Sullivan, \emph{Formulas for non-holomorphic Eisenstein series and for the Riemann zeta function at odd integers}, Res. Number Theory \textbf{4}, (2018), No. 3, Art. 36, 38 pp.

\bibitem{titch}
E.C.~Titchmarsh, \emph{The Theory of the Riemann Zeta Function}, Clarendon Press, Oxford, 1986.

\bibitem{waldschmidt}
M.~Waldschmidt, \emph{Transcendence of periods: the state of the art}, Pure Appl. Math. Q.~\textbf{2} (2) (2006), 435--463.

\bibitem{watson-1944a}
G.~N.~Watson, \emph{A Treatise on the Theory of Bessel Functions}, second ed., Cambridge University Press, London, 1944.


\bibitem{zudilin}
W.~W.~Zudilin, \emph{One of the numbers $\zeta(5), \zeta(7), \zeta(9)$ and $\zeta(11)$ is irrational} (Russian), Uspekhi Mat. Nauk~\textbf{56} No. 4 (2001), 149--150; translation in Russian Math. Surveys~\textbf{56} No. 4 (2001), 774--776.




\end{thebibliography}
\end{document}